\documentclass{ws-m3as}

\usepackage{amsmath}
\usepackage{amssymb}
\usepackage{t1enc}
\usepackage[english]{babel}
\usepackage{verbatim}
\usepackage{graphicx}
\usepackage[dvips]{epsfig}
\usepackage{url}
\usepackage{tikz}
\usetikzlibrary{arrows.meta}
\usepackage{subcaption}
\usepackage{booktabs}
\DeclareFontFamily{U}{mathx}{}
\DeclareFontShape{U}{mathx}{m}{n}{<-> mathx10}{}
\DeclareSymbolFont{mathx}{U}{mathx}{m}{n}
\DeclareMathAccent{\widecheck}{0}{mathx}{"71}

\usepackage[hidelinks]{hyperref}
\usepackage{pgfplots}

\newcommand{\closure}{\mathrm{cl}\,}
\newcommand{\xx}{\mathbf{x}}
\newcommand{\ul}{\underline}
\newcommand{\wh}{\widehat}

\newenvironment{proofof}[1]{%
\par\addvspace{12pt plus3pt minus3pt}\global\logotrue%
\noindent{\bf Proof of #1.\hskip.5em}\ignorespaces}{%
	\par\iflogo\vskip-\lastskip
	\vskip-\baselineskip\prbox\par
	\addvspace{12pt plus3pt minus3pt}\fi}

\setlength{\parskip}{.5em}

\bibliographystyle{abbrv}

\usepackage{array}

\usepackage{graphicx,bm,xcolor}
\usetikzlibrary{arrows}

\newtheorem{notation}{Notation}[section]

\begin{document}

\markboth{Stefan Takacs}{An IETI method for conforming discretizations of the biharmonic problem}

\catchline{}{}{}{}{}

\title{An Isogeometric Tearing and Interconnecting method for conforming discretizations of the biharmonic problem}

\author{Stefan Takacs}

\address{Institute of Numerical Mathematics,\\
Johannes Kepler University Linz,\\
Altenberger Str.~69, 4040 Linz, Austria\\
stefan.takacs@numa.uni-linz.ac.at}

\maketitle
$ $\\[2em]

\selectlanguage{english}
\begin{abstract}
	We propose and analyze a domain decomposition solver for
	the biharmonic problem. The problem is discretized in a conforming way
	using multi-patch Isogeometric Analysis.
	As first step, we discuss the setup of a sufficiently smooth discretization space.
	We focus on two dimensional computational domains
	that are parameterized with sufficiently smooth geometry functions.
	As solution technique, we use a variant of the Dual-Primal Finite Element
	Tearing and Interconnecting method that is also known as Dual-Primal
	Isogeometric Tearing and Interconnecting (IETI-DP) method in the context of
	Isogeometric Analysis.  We present a condition number estimate
	and illustrate the behavior of the proposed method with numerical results.
\end{abstract}

\keywords{Biharmonic problem, Isogeometric Analysis; FETI-DP.}

\ccode{AMS Subject Classification: 65N55, 65N30, 65F08, 65D07}

\section{Introduction}
\label{sec:1}

This paper is concerned with the conforming discretization and the solution of elliptic boundary value problems for partial differential equations (PDEs) of fourth order. Examples for applications of such differential equations include the simulation of thin plates or shells as in the Kirchhoff-Love model, cf. Refs.~\refcite{Ciarlet:2002} and~\refcite{Reddy:2007}, and for strong formulations of Schur complements, which were proposed as preconditioners for certain classes of optimal control problems, cf. Ref.~\refcite{MardalNielsenNordaas:2017} and references therein. We consider the first biharmonic equation as a model problem since a profound understanding of solvers for that equation also helps with the construction of solvers for other fourth order problems.

The variational formulation of the biharmonic problem uses the Sobolev space $H^2$ of twice weakly differentiable functions. In order to obtain a conforming discretization with piecewise polynomial functions (as usual in finite element and related approaches), the basis functions need to be continuously differentiable. In the context of finite element methods, this can be achieved with the Argyris element, cf. Ref.~\refcite{Ciarlet:2002}, that uses a polynomial degree of $5$, which leads to $21$ degrees of freedom per element.
Simpler constructions are possible using Isogeometric Analysis (IgA), which
was proposed in Ref.~\refcite{HughesCottrellBazilevs:2005} to bride the gap between computer aided design and simulation, also with applications in continuum mechanics in mind.
In IgA, both the computational domain and the solution of the PDE are represented as linear combination of tensor-product B-splines or non-uniform rational B-splines (NURBS).
So, arbitrarily smooth basis functions can be constructed effortlessly.

Alternatives to conforming discretizations include mortar or Nitsche type methods, cf. Refs.~\refcite{BrivadisBuffaWohlmuthWunderlich:2015,GuoRuess:2015,HorgerRealiWohlmuthWunderlich:2019,WeinmullerTakacs:2022} and~\refcite{BenvenutiLoliSangalliTakacs:2023}, or methods based on the reformulation of the problem in terms of a system of differential equations, like in the Hellan--Herrmann--Johnson approach, cf. Refs.~\refcite{Hellan:1967,Herrmann:1967} and~\refcite{Johnson:1973}. While allowing for simpler discretizations, these approaches come with their own sets of challenges. The construction of solvers for such modified problems is often harder than for the original problem. The construction of the Hellan--Herrmann--Johnson method is tied to the specific differential equation and the extension to variants of the problem, like with highly varying material parameters, is not obvious.

We are interested in a domain decomposition solver for the conforming isogeometric discretization the the biharmonic problem. Several approaches have already been proposed for the single-patch case, like multigrid methods, cf. Refs.~\refcite{SognTakacs:2019} and~\refcite{SognTakacs:2023}. Also the extension of the fast diagonalization preconditioner, cf. Ref.~\refcite{SangalliTani:2016}, to the biharmonic problem seems to be straight-forward.
The situation for the multi-patch case is more sparse.

Among the most popular domain decomposition methods are the
Finite Element Tearing and Interconnecting (FETI) method and its dual-primal version FETI-DP, see for example Refs.~\refcite{FarhatRoux:1991a,FarhatLesoinneLeTallecPiersonRixen:2001a} and~\refcite{ToselliWidlund:2005a}, and
the Balancing Domain Decomposition by Constraints (BDDC) method, introduced in Ref.~\refcite{Dohrmann:2003}, which can be seen as the primal counterpart of the FETI-DP method. The BDDC approach has been adapted to IgA in Refs.~\refcite{BdVPavarinoScacciWidlundZampini:2014} and~\refcite{BdvPavarinoScacciWidlundZampini:2017}; see also more recent publications of the authors for further extensions. In these papers, single-patch geometries are considered, which are subdivided into substructures used for the domain decomposition solver. In their construction, the authors keep the smoothness of the spline space.

Also FETI-DP has been adapted to IgA. Ref.~\refcite{KleissPechsteinJuttlerTomar:2012} extends that method to multi-patch IgA for second order problems. The authors have called their approach the Dual-Primal Isogeometric Tearing and Interconnecting method (IETI-DP). Compared to the abovementioned publications on BDDC, Ref.~\refcite{KleissPechsteinJuttlerTomar:2012} takes another approach to the setup of the substructures: Multi-patch geometries are considered and each patch from the geometry representation is treated as one substructure for the solver. The coupling between the patches is realized with minimal smoothness required for a conforming discretization, so it is only $C^0$ continuous for second order problems. Later publications, like Ref.~\refcite{SchneckenleitnerTakacs:2020}, which gave a condition number bound for the IETI-DP solvers, followed that setup.

We also follow that idea and assume each substructure to be one patch of a multi-patch discretization. Since we are interested in a $H^2$ conforming discretization, we enforce $C^1$ continuity across the interfaces between any two patches. This is not trivial for general geometries. We assume that at each interface between any two patches both their parametrizations and the normal derivatives of their parametrizations agree.
More general settings, like analysis-suitable $G^1$ smooth discretizations as proposed in Refs.~\refcite{CollinSangalliTakacs:2016,KaplSangalliTakacs:2019} and~\refcite{HughesSangalliTakacsToshniwal:2021},
would need some additional considerations which are outside of the scope of this paper.
In Ref.~\refcite{Kapl:2025}, IETI-DP solvers for such a setup were proposed and numerically tested, but so far without condition number estimates.
The analysis presented in the manuscript at hand can be seen as a first step towards a comprehensive analysis for such a more general setup.
We will use a basis transformation such that we can use a scaled Dirichlet preconditioner, which is cheaper than the deluxe preconditioners proposed in Ref.~\refcite{BdVPavarinoScacciWidlundZampini:2014} in the context of BDDC solvers.
For the analysis, we explicitly construct discrete biharmonic extensions with ideas similar to those of Refs.~\refcite{Nepomnyaschikh:1995} and~\refcite{SchneckenleitnerTakacs:2020}.
Using stability estimates for these extensions, we are able to provide a condition number estimate for the preconditioned Schur complement formulation of overall problem.

The remainder of this paper is organized as follows.
In Section~\ref{sec:2}, we introduce the model problem and discuss its
discretization.
The IETI-DP method is constructed in Section~\ref{sec:3}.
In Section~\ref{sec:4}, we give estimates characterizing the required discrete biharmonic extensions.
Using these results, we analyze an IETI-DP solver in Section~\ref{sec:5}.
In Section~\ref{sec:6}, we illustrate our theoretical findings with numerical results.
We close with some conclusions in Section~\ref{sec:7}.

\section{The model problem and its discretization}
\label{sec:2}

As model problem, we consider the \emph{first biharmonic problem}.
The computational domain $\Omega \subset \mathbb R^2$ is assumed to be an open and bounded domain with Lipschitz boundary $\partial \Omega$.
Given a sufficiently smooth function $f:\Omega\to \mathbb R$,
we want to find a sufficiently smooth function $u:\Omega \to \mathbb R$ such that
\begin{align}\label{eq:biharmonic1}
	\Delta^2 u  = f &\quad \mbox{in}\quad \Omega
	\qquad\mbox{and}\qquad
	u=\partial_n u=0 \quad \mbox{on}\quad \partial\Omega,
\end{align}
where $\partial_n$ denotes the directional derivative in the direction of the outer normal vector on $\partial\Omega$.
Since non-homogeneous boundary data can be handled using homogenization, we assume homogeneous boundary data for brevity.
The conforming variational formulation is formulated in the Sobolev space $H^2_0(\Omega)$.
Here and in what follows, $L^2(\Omega)$ and $H^r(\Omega)$ are the standard Lebesque and Sobolev spaces with scalar products $(\cdot,\cdot)_{L^2(\Omega)}$, $(\cdot,\cdot)_{H^r(\Omega)}$, norms $\|\cdot\|_{L^2(\Omega)}=(\cdot,\cdot)_{L^2(\Omega)}^{1/2}$, $\|\cdot\|_{H^r(\Omega)}=(\cdot,\cdot)_{H^r(\Omega)}^{1/2}$ and
seminorms $|\cdot|_{H^r(\Omega)}$. The same notation is used for subdomains, as well as for boundaries. The space $H^r_0(\Omega)$ is the closure of $C_0^\infty(\Omega)$ with respect to the norm $\|\cdot\|_{H^r(\Omega)}$. Following the Sobolev embedding theorem, functions in $H^2_0(\Omega)$ satisfy the boundary conditions~$u=\partial_n u=0$.
Analogously, $L^\infty(\Omega)$ is the set of all (essentially) bounded functions and $\|\cdot\|_{L^\infty(\Omega)}$ is the essential supremum of their absolute value. Moreover, we use the $L_0^\infty$-seminorm, given by $|v|_{L_0^\infty(\Omega)}:=\inf_{c \in\mathbb R} \|v-c\|_{L^\infty(\Omega)}$.

The \emph{variational formulation} is as follows. Given $f\in L^2(\Omega)$,
\begin{align}\label{eq:biharmonic2}
	\mbox{find }u\in H^2_0(\Omega):\qquad
	\int_\Omega \Delta u \Delta v \, \mathrm d \xx
	= \int_\Omega f\, v \, \mathrm d \xx
	\quad \mbox{for all} \quad v\in H^2_0(\Omega).
\end{align}
For each $V\subset \Omega$ with $\partial V\cap \partial\Omega=\emptyset$, the nullspace of $\int_V \Delta u \Delta u \, \mathrm d \xx$ contains all harmonic functions.
So, the local stiffness matrices for a domain decomposition solver (which realize such an integral) have a non-trivial nullspace, which consists of all functions in the function space used for discretization that exactly satisfy $\Delta u=0$. If B-splines are used for discretization, the dimension of this space is independent of the grid size, but grows with the spline degree $p$, cf. the discussion of the rank deficit in Ref.~\refcite{Kapl:2025}.
Instead, we use a different formulation. Using integration by parts, one immediately obtains (cf. also Ref.~\refcite{Grisvard:2011}) that~\eqref{eq:biharmonic2} is equivalent to
\begin{align}\label{eq:biharmonic3}
	\mbox{find }u\in H^2_0(\Omega):\qquad
	\int_\Omega \nabla^2 u : \nabla^2 v \, \mathrm d \xx
	= \int_\Omega f\, v \, \mathrm d \xx
	\quad \mbox{for all} \quad v\in H^2_0(\Omega),
\end{align}
where $\nabla^2$ denotes the Hessian and $\nabla^2 u : \nabla^2 v = \partial_{xx} u \,\partial_{xx} v+2\partial_{xy} u\,\partial_{xy}v + \partial_{yy} u\, \partial_{yy} v$ their Frobenius product. Here, the nullspace of $\int_V \nabla^2 u : \nabla^2 v \, \mathrm d \xx$ with $V\subset \Omega$ and $\partial V\cap \partial\Omega=\emptyset$, contains the affine-linear functions only.

We assume that the \emph{multi-patch computational domain} $\Omega$ is composed of $K$ non-overlapping
patches $\Omega_k$, i.e., we have
\begin{align}\label{eq:ass:geo}
	\closure{\Omega} = \bigcup_{k=1}^K \closure{\Omega_k} \quad \text{and}\quad
	\Omega_k \cap \Omega_\ell = \emptyset \quad\text{for all}\quad k \neq \ell.
\end{align}
Here and in what follows, $\closure{T}$ denotes the closure of any domain $T$.
Each patch $\Omega_k$ is parameterized by a bijective geometry function
\begin{align}
	G_k:\wh{\Omega}:=(0,1)^2 \rightarrow \Omega_k=G_k(\wh{\Omega}) \subset \mathbb{R}^2,
\end{align}
which can be continuously extended to the closure of the parameter domain
$\wh{\Omega}$. In IgA, the geometry function is typically represented as a linear combination of B-splines or NURBS, however this is actually not required for the theory we are developing.

We assume a \emph{geometrically conforming} setup, so for any $k\ne \ell$, the set
\begin{align}\label{eq:ass:geo2}
\mbox{$\Gamma_{k,\ell} := \partial{\Omega_k} \cap \partial{\Omega_\ell}$
is a common edge,
a common corner, or empty.}
\end{align}
As edges and corners of $\Omega_k$, we understand the images of the edges (including corners) and corners of the unit square $\wh \Omega$ under $G_k$.
Consequently, each boundary section
\begin{align}\label{eq:ass:geo3}
\mbox{ $\Gamma_{k,\partial\Omega}:=\partial \Omega_k \cap \partial \Omega$ is the union of zero of more edges and/or corners of $\Omega_k$.}
\end{align}

We assume a \emph{smooth parametrization}, i.e., that the first and second derivatives of $G_k$ and the inverse of the Jacobian of $G_k$ are uniformly bounded, i.e., there is a constant $C_G>0$ such that
\begin{equation}\label{eq:ass:nabla}
		\| \nabla G_k \|_{L^\infty(\wh{\Omega})} \le C_G\, H_k,
		\quad
		\| \nabla^2 G_k \|_{L^\infty(\wh{\Omega})} \le C_G\, H_k,
		\quad
		\| (\nabla G_k)^{-1} \|_{L^\infty(\wh{\Omega})} \le C_G\, H_k^{-1}
\end{equation}
holds for each patch $\Omega_k$ with diameter $H_k$. We assume that the same estimate also holds for the $L^\infty(\partial\wh\Omega)$-norm.
Standard trace estimates, cf. Ref.~\refcite{Lions:1972}, yield
\begin{equation}\label{eq:ass:trace}
	|v|_{H^{1/2}(\partial\Omega_k)}^2
	\le C_T
	|v|_{H^1(\Omega_k)}^2
 	\quad\mbox{for all}\quad v\in H^1(\Omega_k),
\end{equation}
with some constant $C_T>0$ that only depends on the shape of $\Omega_k$.

\begin{remark}
	The dependence of the terms in condition~\eqref{eq:ass:nabla} on the diameter $H_k$ is motivated as follows. For each patch $\Omega_k$, we define its shape via $\widetilde \Omega_k = H_k^{-1} \Omega_k$, which is parameterized by $\widetilde G_k(\wh{\mathbf x}):=H_k^{-1} G_k(\wh{\mathbf x})$. Assuming
	\begin{equation}\label{eq:ass:nabla:tilde}
			\| \nabla \widetilde G_k \|_{L^\infty(\wh{\Omega})} \le C_G,
			\quad
			\| \nabla^2 \widetilde G_k \|_{L^\infty(\wh{\Omega})} \le C_G,
			\quad
			\| (\nabla \widetilde G_k)^{-1} \|_{L^\infty(\wh{\Omega})} \le C_G,
	\end{equation}
	the condition~\eqref{eq:ass:nabla} follows from simple scaling arguments.
	Analogously, the constant $C_T$ from~\eqref{eq:ass:trace} only depends on $\widetilde \Omega_k$, not on $H_k$.
\end{remark}

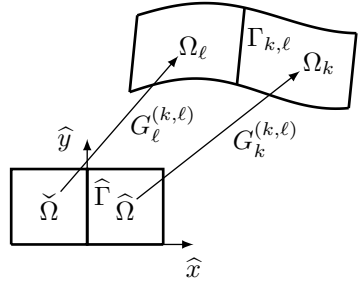
\begin{figure}
	\begin{center}

			\begin{tikzpicture}

			\draw[-,line width = 1pt](0.558824,3.01997)--
(0.617647,3.03973)--
(0.676471,3.0591)--
(0.735294,3.07788)--
(0.794118,3.09589)--
(0.852941,3.11293)--
(0.911765,3.12884)--
(0.970588,3.14347)--
(1.02941,3.15667)--
(1.08824,3.16829)--
(1.14706,3.17824)--
(1.20588,3.18641)--
(1.26471,3.19271)--
(1.32353,3.19709)--
(1.38235,3.1995)--
(1.44118,3.19991)--
(1.5,3.19833)--
(1.55882,3.19477)--
(1.61765,3.18926)--
(1.67647,3.18186)--
(1.73529,3.17264)--
(1.79412,3.1617)--
(1.85294,3.14914)--
(1.91176,3.13509)--
(1.97059,3.11969)--
(2.02941,3.1031)--
(2.08824,3.08548)--
(2.14706,3.067)--
(2.20588,3.04785)--
(2.26471,3.02822)--
(2.32353,3.00832)--
(2.38235,2.98833)--
(2.44118,2.96845)--
(2.5,2.94889)--
(2.55882,2.92984)--
(2.61765,2.9115)--
(2.67647,2.89403)--
(2.73529,2.87763)--
(2.79412,2.86245)--
(2.85294,2.84864)--
(2.91176,2.83634)--
(2.97059,2.82568)--
(3.02941,2.81677)--
(3.08824,2.80968)--
(3.14706,2.80449)--
(3.20588,2.80126)--
(3.26471,2.80002)--
(3.32353,2.80077)--
(3.38235,2.80351)--
(3.44118,2.80822);

			\draw[-,line width = 1pt] (0.658824,4.03795)--
(0.717647,4.05562)--
(0.776471,4.07283)--
(0.835294,4.08944)--
(0.894118,4.10531)--
(0.952941,4.12031)--
(1.01176,4.13432)--
(1.07059,4.14722)--
(1.12941,4.1589)--
(1.18824,4.16927)--
(1.24706,4.17824)--
(1.30588,4.18574)--
(1.36471,4.19171)--
(1.42353,4.19609)--
(1.48235,4.19885)--
(1.54118,4.19997)--
(1.6,4.19944)--
(1.65882,4.19727)--
(1.71765,4.19346)--
(1.77647,4.18805)--
(1.83529,4.1811)--
(1.89412,4.17264)--
(1.95294,4.16276)--
(2.01176,4.15154)--
(2.07059,4.13906)--
(2.12941,4.12544)--
(2.18824,4.11078)--
(2.24706,4.09521)--
(2.30588,4.07885)--
(2.36471,4.06183)--
(2.42353,4.04431)--
(2.48235,4.02642)--
(2.54118,4.00832)--
(2.6,3.99014)--
(2.65882,3.97205)--
(2.71765,3.95418)--
(2.77647,3.9367)--
(2.83529,3.91974)--
(2.89412,3.90344)--
(2.95294,3.88794)--
(3.01176,3.87336)--
(3.07059,3.85983)--
(3.12941,3.84746)--
(3.18824,3.83634)--
(3.24706,3.82658)--
(3.30588,3.81825)--
(3.36471,3.81143)--
(3.42353,3.80616)--
(3.48235,3.80249)--
(3.54118,3.80045);

			\draw[-,line width = 1pt] (0.558824,3.01997)--(0.658824,4.03795);
			\draw[-,line width = 1pt] (3.44118,2.80822)--(3.54118,3.80045);
			\draw[-,line width = 1pt] (1.97059,3.11969)--
(2.07059,4.13906);

			\draw[-,line width = 1pt] (-1,1) -- (0,1) -- (0,2) -- (-1,2) -- (-1,1)  node at (-.5,1.5) {$\widecheck\Omega$};
			\draw[-,line width = 1pt] (1,1) -- (0,1) -- (0,2) -- (1,2) -- (1,1) node at (.5,1.5) {$\wh\Omega$};
			\draw[->, line width = .5pt, -latex] (0,1) -- (1.4,1) node at (1.4,.7) {$\wh x$};
			\draw[->, line width = .5pt, -latex] (0,1) -- (0,2.4) node at (-.3,2.4) {$\wh y$};

			\draw[->, line width = .5pt, -latex] (-.35,1.7) -- (1.2,3.5) node at (1.4,3.6) {$\Omega_\ell$};
			\draw[->, line width = .5pt, -latex] (.65,1.6) -- (2.8,3.3) node at (3.05,3.4) {$\Omega_k$};
			\draw node at (.2,1.7) {$\wh\Gamma$};
			\draw node at (2.4,3.7) {$\Gamma_{k,\ell}$};

			\draw node at (1,2.6) {$G_\ell^{(k,\ell)}$};
			\draw node at (2.35,2.4) {$G_k^{(k,\ell)}$};

			\end{tikzpicture}
			\captionof{figure}{Parametrization of two neighboring patches}
			\label{fig:1}

	\end{center}
\end{figure}

Next, we discuss our assumptions concerning the \emph{smoothness of the parametrization} across the interfaces.
Consider any two patches $\Omega_k$ and $\Omega_\ell$ sharing an edge $\Gamma_{k,\ell}$. Both patches are parameterized with geometry functions, for $\Omega_k$ specifically $G_k:\wh\Omega\to\Omega_k$, where the pre-image of $\Gamma_{k,\ell}$ is one of the four sides of the parameter domain $\wh\Omega$. We rotate the function around the center of $\wh\Omega$ such that the pre-image is $\wh\Gamma:=\{0\}\times [0,1]$. The composition of the rotation and $G_k$ is a bijective function $G_k^{(k,\ell)}: \wh\Omega \to \Omega_k$ with $G_k^{(k,\ell)}(\wh\Gamma) = \Gamma_{k,\ell}$ that also satisfies~\eqref{eq:ass:nabla}. Analogously, we construct using translation and rotation a bijective function $G_\ell^{(k,\ell)}: \widecheck\Omega\to \Omega_\ell$ with $\widecheck\Omega:=(-1,0)\times(0,1) $ and $G_\ell^{(k,\ell)}(\wh\Gamma) = \Gamma_{k,\ell}$ that also satisfies the same bounds as in~\eqref{eq:ass:nabla}. See Figure~\ref{fig:1} for a visualization.
We assume that
\begin{equation}\label{eq:ass:mathching}
	G_k^{(k,\ell)} = G_\ell^{(k,\ell)}
	\quad\mbox{and}\quad
	\partial_{\wh x} G_k^{(k,\ell)} = \partial_{\wh x} G_\ell^{(k,\ell)}
	\quad\mbox{on}\quad
	\wh \Gamma,
\end{equation}
i.e., that the parametrization of the geometry is continuously differentiable across the domains. Using this definition, the function $G^{(k,\ell)}:\closure({\widecheck\Omega\cup \wh\Omega}) \to \closure({\Omega_\ell\cup\Omega_k})$, given by
\begin{equation}\label{eq:gkl}
	G^{(k,\ell)}(\wh \xx)
	:=
	\begin{cases}
		G^{(k,\ell)}_k(\wh \xx) & \mbox{ if } \wh \xx \in \closure{\wh \Omega}, \\
		G^{(k,\ell)}_\ell (\wh \xx) & \mbox{ otherwise,}
	\end{cases}
\end{equation}
is  $C^1$ smooth and bijective.
\begin{remark}
	This assumption excludes the possibility of extraordinary vertices, i.e., vertices in the interior of $\Omega$ with only $3$ or with $5$ or more adjacent patches.
	If instead of the second condition in~\eqref{eq:ass:mathching}, the condition $\partial_{\wh x} G_k^{(k,\ell)} = \alpha_{k,\ell} \partial_{\wh x} G_\ell^{(k,\ell)}$ holds with some constant $\alpha_{k,\ell}>0$, an analogous analysis can be performed. This case is also considered in the numerical experiments in Section~\ref{sec:6:2}.
	More general configurations can be represented using analysis-suitable $G^1$ discretizations (cf. Refs.~\refcite{CollinSangalliTakacs:2016,KaplSangalliTakacs:2018}). An extension of our analysis to those settings is out of the scope of this paper and left as future work.
\end{remark}

Next, we define the \emph{isogeometric function spaces}.
We consider the one dimensional case first.
Let $p\in\{2,3,\dots\}$ be a spline degree. For any $p$-open knot vector $\Xi=(\xi_0,\dots,\xi_{N+p})$ with
\[
		0=\xi_0 = \dots = \xi_{p} < \xi_{p+1} \le \xi_{p+2} \le \dots \le
		\xi_{N-1}<\xi_{N}=\dots=\xi_{N+p} =1,
\]
we denote by $\Phi_{p,\Xi}=(\phi_{p,\Xi}^{(1)},\dots,\phi_{p,\Xi}^{(N)})$ the set of B-spline basis functions $(0,1)\to \mathbb R$ of degree $p$ over $\Xi$, as obtained by the Cox-de~Boor formula, cf. (2.1) and (2.2) in Ref.~\refcite{CottrellHughesBazilevs:2009}.
In order to guarantee that these spline spaces are in $H^2(0,1)$, or, equivalently, in $C^1[0,1]$, the multiplicity of each (inner) knot must not be larger than $p-1$.

We are interested in basis functions that interpolate the function values and the derivatives on the boundary. In order to achieve this, we assume
$N\ge 4$ (which is always the case for $p\ge 3$ and which is a consequence of assumption~\eqref{eq:hlequarter} below for $p=2$)
and introduce a \emph{basis transformation}; specifically, define $\Psi_{p,\Xi}:=(\psi_{p,\Xi}^{(1)},\dots,\psi_{p,\Xi}^{(N)})$ via
\begin{equation}\label{eq:psidef}
		\psi_{p,\Xi}^{(i)}
		:=
		\begin{cases}
			\phi_{p,\Xi}^{(1)} + \phi_{p,\Xi}^{(2)} & \mbox{ if } i=1 \\
			p^{-1} \xi_{p+1} \phi_{p,\Xi}^{(2)} & \mbox{ if } i=2 \\
			\phi_{p,\Xi}^{(i)} & \mbox{ if } 2< i < N-1 \\
			p^{-1} (1-\xi_{N-1}) \phi_{p,\Xi}^{(N-1)} & \mbox{ if } i=N-1 \\
			\phi_{p,\Xi}^{(N)} + \phi_{p,\Xi}^{(N-1)} & \mbox{ if } i=N .
		\end{cases}
\end{equation}
Using this choice, we have for all $i=1,\dots,N$ the identities
\begin{equation}\label{eq:psi:bdy}
		\psi_{p,\Xi}^{(i)}(0) = \delta_{i,1},
		\quad
		\partial_n \psi_{p,\Xi}^{(i)}(0) = \delta_{i,2},
		\quad
		\partial_n \psi_{p,\Xi}^{(i)}(1) = \delta_{i,N-1},
		\quad
		\psi_{p,\Xi}^{(i)}(1) = \delta_{i,N},
\end{equation}
where $\delta_{i,j}$ is the Kronecker-delta. The basis is visualized in Figure~\ref{fig:basis}: The basis functions with function value $1$ on the boundary are depicted in blue color, those with normal derivative $1$ are depicted in red color. The basis functions that coincide with the standard basis are depicted in black.
\begin{figure}
	\begin{center}
		\includegraphics[width=4cm]{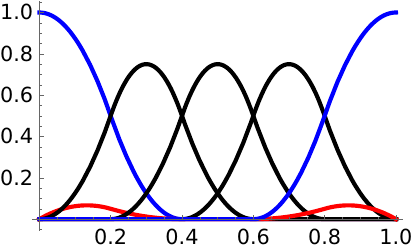}
			\captionof{figure}{Example of transformed basis}
			\label{fig:basis}
	\end{center}
\end{figure}

For the parameter domain $\wh \Omega$, we introduce \emph{tensor-product splines}.
For each patch $\Omega_k$, we choose two $p$-open knot vectors $\Xi_{k,\wh x}$ and $\Xi_{k,\wh y}$ with $N_{k,\wh x}+p+1$ and $N_{k,\wh y}+p+1$ knots, respectively.
For simplicity, we assume to have the same degree $p$ on all patches.
For $\mathbf\Xi_k:=(\Xi_{k,\wh x},\Xi_{k,\wh y})$, we denote by
\begin{equation}\label{eq:Psikdef}
	\wh\Psi_{p,\mathbf\Xi_k} :
	= \{ \wh\psi \in C^1(\wh\Omega) : \wh\psi(\wh x,\wh y)=\psi_{\wh x}(\wh x) \psi_{\wh y}(\wh y),\; \psi_{\wh x}\in \Psi_{p,\Xi_{k,\wh x}},\;\psi_{\wh y}\in \Psi_{p,\Xi_{k,\wh y}}\}
\end{equation}
the set of tensor-product B-spline basis functions. As function spaces, we choose
\begin{equation}\label{eq:Vkdef}
	\wh V_k := \{ v \in \mathrm{span}\,\Psi_{p,\Xi_k}
	\,:\, v|_{\wh \Gamma_{k,\partial\Omega}} =  \partial_n v|_{\wh \Gamma_{k,\partial\Omega}} = 0 \}
	\subset H^2(\wh\Omega),
\end{equation}
where $\wh \Gamma_{k,\partial\Omega}:=G_k^{-1}(\Gamma_{k,\partial\Omega})$ is the pre-image of $\Gamma_{k,\partial\Omega}$ as defined in~\eqref{eq:ass:geo3}. The function space for the physical patch is defined via the \emph{pull-back principle}:
\begin{equation}\label{eq:pullback}
	V_k := \{ v : v\circ G_k \in \wh V_k \}  \subset H^2_{0,\partial\Omega}(\Omega_k) :=\{ v\in H^2(\Omega_k):   v|_{\Gamma_{k,\partial\Omega}} = \partial_n v|_{\Gamma_{k,\partial\Omega}} = 0\}.
\end{equation}
Analogously, let $\Psi_k := \{ \psi : \psi \circ G_k \in \wh\Psi_{p,\mathbf\Xi_k} \cap H^2_{0,\partial\Omega}(\Omega_k)\}$ be the basis for $V_k$.

In order to be able to formulate the desired continuity conditions, we assume that for any two patches $\Omega_k$ and $\Omega_\ell$ sharing an edge $\Gamma_{k,\ell}$
\begin{equation}\label{eq:ass:knotvec matching}
	\mbox{the knot vectors associated to the direction of $\Gamma_{k,\ell}$ agree.}
\end{equation}
Discretizations satisfying this condition are also called \emph{fully matching}. This condition guarantees that for each basis function in $\Psi_k$, there is a basis function in $\Psi_\ell$ such that their traces on $\Gamma_{k,\ell}$ and the traces of the normal derivatives on $\Gamma_{k,\ell}$ agree. If the traces do not vanish, the basis function in $\Psi_\ell$ is unique.

Let $\wh h_k$ and $\wh h_{k,\mathrm{min}}$
be the largest and the smallest (non-empty) knot span of $\Xi_{k,1}$ and $\Xi_{k,2}$.
The grids are quasi-uniform; $C_Q>0$ is the largest constant such that
\begin{equation}\label{eq:ass:quasi uniform}
	\wh h_{k,\mathrm{min}}  \ge C_Q \wh h_k
 	\quad\mbox{for all}\quad k\in\{1,\ldots,K\}.
\end{equation}
The grid size on the physical domain is defined by scaling, i.e., via
$h_k:=H_k \wh{h}_k$
and
$h_{k,\mathrm{min}}:=H_k \wh{h}_{k,\mathrm{min}}$.
In order to allow for our theoretical estimates, assume further that
\begin{equation}\label{eq:hlequarter}
			 \wh h_k \le \frac14 \quad\mbox{for all}\quad k\in\{1,\ldots,K\}.
\end{equation}

A conforming Galerkin discretization of the variational
problem~\eqref{eq:biharmonic3} using the overall function space
\begin{equation}\label{eq:vdef}
V_{\mathrm{conf}} := \lbrace u \in C^1(\Omega): u|_{\Omega_k} \in V_k \text{ for } k = 1, \dots, K \rbrace \subset H^2_0(\Omega).
\end{equation}
reads as
\begin{align}\label{eq:biharmonic3h}
	\mbox{find }u \in V_{\mathrm{conf}}:\qquad
	\underbrace{\int_\Omega \nabla^2 u : \nabla^2 v \, \mathrm d \xx}_{\displaystyle a(u,v):=}
	= \underbrace{\int_\Omega f\, v \, \mathrm d \xx}_{\displaystyle \langle f,v\rangle:=}
	\quad \mbox{for all} \quad v\in V_{\mathrm{conf}},
\end{align}
which is equivalent to the minimization problem
\begin{align}\label{eq:biharmonic3min}
	\frac12 a(u,u) -\langle f,u\rangle \to \min, \quad u\in V_{\mathrm{conf}}.
\end{align}

\section{The setup of the IETI-DP solver}
\label{sec:3}

In this section, we derive a IETI-DP solver for the variational problem~\eqref{eq:biharmonic3h}. First, we assemble $a=a_1+\dots+a_K$ and $f=f_1+\dots+f_K$ locally via
\begin{align*}
	a_k(u,v) :=
	\int_{\Omega_k} \nabla^2 u : \nabla^2 v \, \mathrm d \xx
	\quad\mbox{and}\quad
	\langle f_k,v \rangle :=
	 \int_{\Omega_k} f\, v \, \mathrm d \xx.
\end{align*}
The variational problem~\eqref{eq:biharmonic3h} and the equivalent minimization problem \eqref{eq:biharmonic3min} are equivalent to the problem
\begin{equation}\label{eq:3:minimize0}
\sum_{k=1}^K  \big( \tfrac12 a_k(u^{(k)},u^{(k)}) - \langle f_k, u^{(k)} \rangle \big) \to \min, \; u^{(k)} \in V_k
\end{equation}
subject to the smoothness conditions
\begin{equation}\label{eq:3:minimize constr}
		u^{(k)} = u^{(\ell)}, \qquad \partial_{n_k} u^{(k)} = \partial_{n_k} u^{(\ell)} \qquad \mbox{ on } \qquad \Gamma_{k,\ell} =\partial\Omega_k\cap \partial\Omega_\ell
\end{equation}
for any two patches $\Omega_k$ and $\Omega_\ell$ sharing an edge and with $k< \ell$.

Using the basis $\Psi_{k}=: (\psi^{(k)}_1,\ldots,\psi^{(k)}_{N_k})$ of $V_k$, every function $u^{(k)}\in V_k$ can be uniquely represented as linear combination of the basis functions, i.e., we have $u^{(k)} = \sum_{i=1}^{N_k} u_i^{(k)} \psi^{(k)}_i$ with coefficients $u_i^{(k)}$ that can be collected into a vector $\ul u^{(k)}:=(u_1^{(k)},\dots,u_{N_k}^{(k)})^\top$. Also using the same basis, we define local stiffness matrices and load vectors:
\[
	A^{(k)} = [a_k(\psi_j^{(k)},\psi_i^{(k)})]_{i,j=1}^{N_k} \quad\mbox{and}\quad
	\ul{f}^{(k)} = [ \langle f_k,\psi_i^{(k)}\rangle]_{i=1}^{N_k}.
\]
\eqref{eq:gkl} states that the union of any two patches $\Omega_k$ and $\Omega_\ell$ sharing an edge can be parameterized with a function $G^{(k,\ell)}: [-1,1]\times[0,1] \to \closure{(\Omega_k\cup\Omega_\ell)}$. The smoothness conditions~\eqref{eq:3:minimize constr} are equivalent to the smoothness conditions
\begin{align}\label{eq:3:smoothness1}
		\wh u^{(k)} &= \wh u^{(\ell)}, && \mbox{ on } \quad \wh \Gamma,\\
		\label{eq:3:smoothness2}
		\partial_{\wh x} \wh u^{(k)} &= \partial_{\wh x} \wh u^{(\ell)} && \mbox{ on } \quad \wh \Gamma,
\end{align}
where $\wh u^{(k)} : \wh\Omega \to \mathbb R$ and $\wh u^{(\ell)}: \widecheck\Omega \to \mathbb R$ are the pre-images of $u^{(k)}$ and $u^{(\ell)}$, respectively, under $G^{(k,\ell)}$. Note that the pre-image of the normal vector $n_k$ is not necessarily equal to $\wh x$, however the combination of the conditions~\eqref{eq:3:smoothness1} and \eqref{eq:3:smoothness2} guarantees that the gradients are continuous across the interfaces.  This is also guaranteed by~\eqref{eq:3:minimize constr}.

The enforcement of the condition~\eqref{eq:3:smoothness1} is standard. According to~\eqref{eq:psi:bdy}, only one basis function of the transformed B-spline basis is nonzero on the boundary. So, for the tensor-product basis, only the outermost ``layer'' of basis functions is active on the interface and the traces of these basis functions are linear independent.
So, for all basis functions $\psi_i^{(k)}$ and $\psi_j^{(\ell)}$ with pullbacks $\wh\psi_i^{(k)}$ and $\wh\psi_j^{(\ell)}$ under $G^{(k,\ell)}$ such that
\[
\wh\psi_i^{(k)}|_{\wh\Gamma} = \wh\psi_j^{(\ell)}|_{\wh\Gamma}\not=0
,
\]
we introduce a constraint of the form
\begin{equation}\label{eq:def:b}
u^{(k)}_{i} - u^{(\ell)}_{j} = 0.
\end{equation}
These constraints are visualized for the case of two patches in Figure~\ref{fig:schema} by arrows connecting the blue square dots. The extension to the case of several patches is discussed below.
\begin{figure}
	\centering
	\scalebox{.45}{
		\begin{tikzpicture}
		\definecolor{lightgray}{gray}{0.95}
		\filldraw [color=lightgray] (0,0) rectangle (5,7);
		\draw (0,0) -- (5,0) -- (5,7) -- (0,7) -- (0,0);

		\foreach \i in {0,...,7} \foreach \j in {4,5}
		\node[color=gray,mark size=3.5pt] at (\j,\i) {\pgfuseplotmark{*}};

		\foreach \i in {0,1,6,7}\foreach \j in {0,...,3}
		\node[color=gray,mark size=3.5pt] at (\j,\i) {\pgfuseplotmark{*}};

		\foreach \i in {2,...,5}
		\node[color=blue,mark size=3.5pt] at (0,\i) {\pgfuseplotmark{square*}};

		\foreach \i in {2,...,5}
		\node[color=red,mark size=3.5pt] at (1,\i) {\pgfuseplotmark{triangle*}};

		\foreach \i in {2,...,5} \foreach \j in {2,...,3}
		\node[color=black,mark size=3.5pt] at (\j,\i) {\pgfuseplotmark{*}};

		\filldraw [color=lightgray] (-1,0) rectangle (-6,7);
		\draw (-1,0) -- (-6,0) -- (-6,7) -- (-1,7) -- (-1,0);

		\foreach \i in {0,...,7} \foreach \j in {-5,-6}
		\node[color=gray,mark size=3.5pt] at (\j,\i) {\pgfuseplotmark{*}};

		\foreach \i in {0,1,6,7}\foreach \j in {-1,...,-4}
		\node[color=gray,mark size=3.5pt] at (\j,\i) {\pgfuseplotmark{*}};

		\foreach \i in {2,...,5}
		\node[color=blue,mark size=3.5pt] at (-1,\i) {\pgfuseplotmark{square*}};

		\foreach \i in {2,...,5}
		\node[color=red,mark size=3.5pt] at (-2,\i) {\pgfuseplotmark{triangle*}};

		\foreach \i in {2,...,5} \foreach \j in {-3,...,-4}
		\node[color=black,mark size=3.5pt] at (\j,\i) {\pgfuseplotmark{*}};

		\foreach \j in {1.9,2.9,3.9,4.9}
		\draw [{Stealth[length=3mm, width=2.5mm]}-{Stealth[length=3mm, width=2.5mm]}, thick] (0,\j) to [out=-150,in=-30]  (-1,\j);

		\foreach \j in {2.1,3.1,4.1,5.1}
		\draw [{Stealth[length=3mm, width=2.5mm]}-{Stealth[length=3mm, width=2.5mm]}, thick] (1,\j) to [out=150,in=30]  (-2,\j);

		\end{tikzpicture}
	}
	\caption{\label{fig:schema} Constraints between two patches.}
\end{figure}
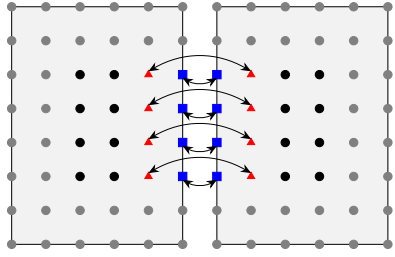
Additionally, we have to enforce the smoothness condition~\eqref{eq:3:smoothness2} or, equivalently,
$\partial_{\wh n_k} \wh u_k+ \partial_{\wh n_\ell} \wh u_\ell=0$, where
$\wh n_k=-\wh n_\ell=(-1,0)^\top$ are the corresponding normal vectors.
According to~\eqref{eq:psi:bdy}, there is again one ``layer'' of basis functions whose derivatives do not vanish on the interfaces, i.e., such that
\[
	\partial_{\wh n_k} \wh \psi_i^{(k)}|_{\wh \Gamma} = - \partial_{\wh n_\ell} \wh \psi_j^{(\ell)}|_{\wh \Gamma}\not=0.
\]
For each of these basis functions, we introduce smoothness conditions of the form
\begin{equation}\label{eq:def:b2}
		u^{(k)}_{i}
		+
		u^{(\ell)}_{j}
		=
		0.
\end{equation}
These constraints are visualized in Figure~\ref{fig:schema} by arrows connecting the red triangular dots.
The black dots correspond to ``inner'' basis functions whose function value and whose derivative vanish on $\partial \Omega_k$. The gray dots correspond to the basis functions that are not in $H^2_{0,\partial\Omega}(\Omega_k)$ and $H^2_{0,\partial\Omega}(\Omega_\ell)$, so they are  not in the bases $\Psi_k$ and $\Psi_\ell$.

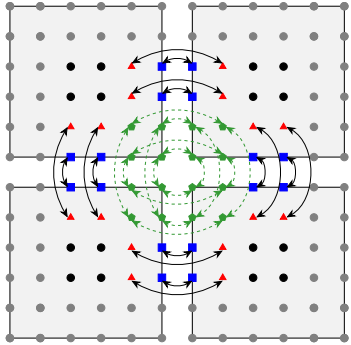
\begin{figure}[b]
	\centering
	\scalebox{.4}{
		\begin{tikzpicture}
		\definecolor{lightgray}{gray}{0.95}
		\definecolor{green}{rgb}{0.2, 0.6, 0.2}

		\filldraw [color=lightgray] (0,0) rectangle (5,5);
		\draw (0,0) -- (5,0) -- (5,5) -- (0,5) -- (0,0);
		\filldraw [color=lightgray] (-1,0) rectangle (-6,5);
		\draw (-1,0) -- (-6,0) -- (-6,5) -- (-1,5) -- (-1,0);
		\filldraw [color=lightgray] (0,-1) rectangle (5,-6);
		\draw (0,-1) -- (5,-1) -- (5,-6) -- (0,-6) -- (0,-1);
		\filldraw [color=lightgray] (-1,-1) rectangle (-6,-6);
		\draw (-1,-1) -- (-6,-1) -- (-6,-6) -- (-1,-6) -- (-1,-1);

		\foreach \i in {0,...,5} \foreach \j in {4,5}
		{
		\node[color=gray,mark size=3.5pt] at (\j,\i) {\pgfuseplotmark{*}};
		\node[color=gray,mark size=3.5pt] at (\i,\j) {\pgfuseplotmark{*}};
		\node[color=gray,mark size=3.5pt] at (-1-\j,\i) {\pgfuseplotmark{*}};
		\node[color=gray,mark size=3.5pt] at (-1-\i,\j) {\pgfuseplotmark{*}};
		\node[color=gray,mark size=3.5pt] at (\j,-1-\i) {\pgfuseplotmark{*}};
		\node[color=gray,mark size=3.5pt] at (\i,-1-\j) {\pgfuseplotmark{*}};
		\node[color=gray,mark size=3.5pt] at (-1-\j,-1-\i) {\pgfuseplotmark{*}};
		\node[color=gray,mark size=3.5pt] at (-1-\i,-1-\j) {\pgfuseplotmark{*}};
		}
		\foreach \i in {2,...,3} \foreach \j in {2,...,3}
		{
		\node[color=black,mark size=3.5pt] at (\j,\i) {\pgfuseplotmark{*}};
		\node[color=black,mark size=3.5pt] at (-1-\j,\i) {\pgfuseplotmark{*}};
		\node[color=black,mark size=3.5pt] at (\j,-1-\i) {\pgfuseplotmark{*}};
		\node[color=black,mark size=3.5pt] at (-1-\j,-1-\i) {\pgfuseplotmark{*}};
		}

		\foreach \i in {2,...,3}
		{
		\node[color=blue,mark size=3.5pt] at (0,\i) {\pgfuseplotmark{square*}};
		\node[color=blue,mark size=3.5pt] at (\i,0) {\pgfuseplotmark{square*}};
		\node[color=blue,mark size=3.5pt] at (-1-0,\i) {\pgfuseplotmark{square*}};
		\node[color=blue,mark size=3.5pt] at (-1-\i,0) {\pgfuseplotmark{square*}};
		\node[color=blue,mark size=3.5pt] at (0,-1-\i) {\pgfuseplotmark{square*}};
		\node[color=blue,mark size=3.5pt] at (\i,-1-0) {\pgfuseplotmark{square*}};
		\node[color=blue,mark size=3.5pt] at (-1-0,-1-\i) {\pgfuseplotmark{square*}};
		\node[color=blue,mark size=3.5pt] at (-1-\i,-1-0) {\pgfuseplotmark{square*}};
		}

		\foreach \i in {2,...,3}
		{
		\node[color=red,mark size=3.5pt] at (1,\i) {\pgfuseplotmark{triangle*}};
		\node[color=red,mark size=3.5pt] at (\i,1) {\pgfuseplotmark{triangle*}};
		\node[color=red,mark size=3.5pt] at (-1-1,\i) {\pgfuseplotmark{triangle*}};
		\node[color=red,mark size=3.5pt] at (-1-\i,1) {\pgfuseplotmark{triangle*}};
		\node[color=red,mark size=3.5pt] at (1,-1-\i) {\pgfuseplotmark{triangle*}};
		\node[color=red,mark size=3.5pt] at (\i,-1-1) {\pgfuseplotmark{triangle*}};
		\node[color=red,mark size=3.5pt] at (-1-1,-1-\i) {\pgfuseplotmark{triangle*}};
		\node[color=red,mark size=3.5pt] at (-1-\i,-1-1) {\pgfuseplotmark{triangle*}};
		}

		\foreach \i in {-2,...,1}
		\foreach \j in {-2,...,1}
		\node[color=green,mark size=3.5pt] at (\i,\j) {\pgfuseplotmark{pentagon*}};

		\foreach \j in {2.1,3.1}
		{
		\draw [{Stealth[length=3mm, width=2.5mm]}-{Stealth[length=3mm, width=2.5mm]}, thick] (0,\j) to [out=150,in=30]  (-1,\j);
		\draw [{Stealth[length=3mm, width=2.5mm]}-{Stealth[length=3mm, width=2.5mm]}, thick] (\j,0) to [out=-30,in=30]  (\j,-1);
		\draw [{Stealth[length=3mm, width=2.5mm]}-{Stealth[length=3mm, width=2.5mm]}, thick] (1,\j) to [out=150,in=30]  (-2,\j);
		\draw [{Stealth[length=3mm, width=2.5mm]}-{Stealth[length=3mm, width=2.5mm]}, thick] (\j,1) to [out=-30,in=30]  (\j,-2);
		}

		\foreach \j in {-4.1,-3.1}
		{
		\draw [{Stealth[length=3mm, width=2.5mm]}-{Stealth[length=3mm, width=2.5mm]}, thick] (0,\j) to [out=-150,in=-30]  (-1,\j);
		\draw [{Stealth[length=3mm, width=2.5mm]}-{Stealth[length=3mm, width=2.5mm]}, thick] (\j,0) to [out=-120,in=-240]  (\j,-1);
		\draw [{Stealth[length=3mm, width=2.5mm]}-{Stealth[length=3mm, width=2.5mm]}, thick] (1,\j) to [out=-150,in=-30]  (-2,\j);
		\draw [{Stealth[length=3mm, width=2.5mm]}-{Stealth[length=3mm, width=2.5mm]}, thick] (\j,1) to [out=-120,in=-240]  (\j,-2);
		}

		\foreach \j in {0.1,1.1}
		{
		\draw [{Stealth[length=3mm, width=2.5mm]}-{Stealth[length=3mm, width=2.5mm]}, thick, color=green, dashed] (0,\j) to [out=150,in=30]  (-1,\j);
		\draw [{Stealth[length=3mm, width=2.5mm]}-{Stealth[length=3mm, width=2.5mm]}, thick, color=green, dashed] (\j,0) to [out=-30,in=30]  (\j,-1);
		\draw [{Stealth[length=3mm, width=2.5mm]}-{Stealth[length=3mm, width=2.5mm]}, thick, color=green, dashed] (1,\j) to [out=150,in=30]  (-2,\j);
		\draw [{Stealth[length=3mm, width=2.5mm]}-{Stealth[length=3mm, width=2.5mm]}, thick, color=green, dashed] (\j,1) to [out=-30,in=30]  (\j,-2);
		}

		\foreach \j in {-2.1,-1.1}
		{
		\draw [{Stealth[length=3mm, width=2.5mm]}-{Stealth[length=3mm, width=2.5mm]}, thick, color=green, dashed] (0,\j) to [out=-150,in=-30]  (-1,\j);
		\draw [{Stealth[length=3mm, width=2.5mm]}-{Stealth[length=3mm, width=2.5mm]}, thick, color=green, dashed] (\j,0) to [out=-120,in=-240]  (\j,-1);
		\draw [{Stealth[length=3mm, width=2.5mm]}-{Stealth[length=3mm, width=2.5mm]}, thick, color=green, dashed] (1,\j) to [out=-150,in=-30]  (-2,\j);
		\draw [{Stealth[length=3mm, width=2.5mm]}-{Stealth[length=3mm, width=2.5mm]}, thick, color=green, dashed] (\j,1) to [out=-120,in=-240]  (\j,-2);
		}

		\end{tikzpicture}
	}
	\caption{\label{fig:schema2} Constraints between four patches.}
\end{figure}

Having several patches, the same construction is applied on an edge-by-edge basis. Special care has to be taken for the degrees of freedom at the cross points, i.e., to degrees of freedom that are associated to more than two patches. In our application, these degrees of freedom are associated to the corners of the patches. Specifically, there are four of those degrees of freedom for each corner. This case is visualized in Figure~\ref{fig:schema2}, where green pentagons are used to indicate the corner degrees of freedom, dashed green lines are used to indicate the constraints that would be obtained by applying the construction of~\eqref{eq:def:b} and~\eqref{eq:def:b2} in a straight-forward way to those degrees of freedom.
We do not represent them using constraints, instead we choose the corner degrees of freedom as primal degrees of freedom.
This means that we assign global indices to each of the corner degrees of freedom.
Since these degrees of freedom not only include the corner values, but also the values of the derivatives on the corners, this choice of primal degrees of freedom is also known as \emph{fat vertices}, cf. Ref.~\refcite{BdVPavarinoScacciWidlundZampini:2014}.
We denote the number of these global degrees of freedom by $N_\Pi$.
For each patch $\Omega_k$, we have a matrix $C^{(k)} \in \{0,1\}^{N_{k,\Pi}\times N_k}$ that selects the primal degrees of freedom (depicted with green pentagon dots), where $N_{k,\Pi}$ is the number of primal degrees of freedom belonging to that patch. Moreover, we have a matrix $ R^{(k)}\in \{-1,0,1\}^{N_{k,\Pi}\times N_\Pi}$ that associates the global indices of the primal degrees of freedom with their patch local indices. The negative coefficients in $R^{(k)}$ account for the different signs of the normal vectors. The $C^1$-smoothness condition for the primal degrees of freedom is guaranteed by the constraint
\begin{align}
	\label{eq:3:contC}
	\mbox{there is a }\; \ul w_\Pi \in\mathbb R^{N_\Pi} \;\mbox{ such that }\;
	 C^{(k)} \ul u^{(k)} = R^{(k)} \ul w_\Pi
	\;\mbox{ for } k=1,\ldots,K.
\end{align}
The remaining constraints (depicted with black solid arrows) both of the form \eqref{eq:def:b} and the form~\eqref{eq:def:b2} are collected row-wise into matrices $ B^{(k)}\in \{-1,0,1\}^{N_\lambda\times N_k}$ such that each constraint is a row of the linear system
\begin{align}
	\label{eq:3:contB}
	\sum_{k=1}^K  B^{(k)} \ul u^{(k)} = 0
\end{align}
and $N_\lambda$ is the number of the constraints.
The problem~\eqref{eq:3:minimize0} -- and thus~\eqref{eq:biharmonic3h} -- is equivalent to \begin{equation}\label{eq:3:minimize2}
\sum_{k=1}^K \big( \tfrac12 \ul u^{(k)\top} A^{(k)} \ul u^{(k)} - \ul u^{(k)\top}\ul f^{(k)} \big) \to \min \quad \mbox{subject to~\eqref{eq:3:contC} and~\eqref{eq:3:contB}.}
\end{equation}
The optimality system for this minimization problem is
\[
	\begin{pmatrix}
		A^{(1)} &  C^{(1)\top} &     &      &&     0 &  B^{(1)\top} \\
		 C^{(1)} & 0&     &&      &     -R^{(1)} & 0 \\
		& & \ddots &  && \vdots& \vdots \\
		&&& A^{(K)} &  C^{(K)\top} &     0 &  B^{(K)\top} \\
		&&&  C^{(K)}      &0&     -R^{(K)} & 0 \\
		0 & -R^{(1)\top}  & \hdots & 0 & -R^{(K)\top} \\
		 B^{(1)} & 0  & \hdots &  B^{(K)} & 0 \\
	\end{pmatrix}
	\begin{pmatrix}
		\ul u^{(1)} \\ \ul \mu^{(1)} \\
		\vdots \\
		\ul u^{(K)} \\ \ul \mu^{(K)} \\
		\ul w_{\Pi} \\ \ul \lambda
	\end{pmatrix}
	=
	\begin{pmatrix}
		\ul f^{(1)} \\ 0 \\
		\vdots \\
		\ul f^{(K)} \\ 0 \\
		0 \\ 0
	\end{pmatrix}
\]
In order to get a more explicit representation, we group the degrees of freedom into dual degrees of freedom (index ``$\Delta$'') and primal degrees of freedom (index ``$\Pi$''). The primal degrees of freedom are those associated to the corners (green pentagon dofs in Figure~\ref{fig:schema2}), the dual degrees of freedom are the remaining (free) degrees of freedom (black circular dots, blue triangular dots and red square dots). Assuming that the dofs are ordered according to their assignment to these groups, we have
\begin{align*}
	A^{(k)}
	&=
	\begin{pmatrix}
	A_{\Delta\Delta}^{(k)} & A_{\Delta\Pi}^{(k)}\\
	A_{\Pi\Delta}^{(k)} & A_{\Pi\Pi}^{(k)}\\
	\end{pmatrix},
	\;
	\ul u^{(k)}
	=
	\begin{pmatrix}
	\ul u_{\Delta}^{(k)} \\ \ul u_{\Pi}^{(k)}\\
	\end{pmatrix},
	\;
	\ul f^{(k)}
	=
	\begin{pmatrix}
	\ul f_{\Delta}^{(k)} \\ \ul f_{\Pi}^{(k)}\\
	\end{pmatrix},
	\;
	\begin{array}{c}
		 B^{(k)}
		=
		\begin{pmatrix}
		B_{\Delta}^{(k)} & 0\\
		\end{pmatrix},
		\\
		 C^{(k)}
		=
		\begin{pmatrix}
		0 & C_{\Pi}^{(k)}
	\end{pmatrix}.
	\end{array}
\end{align*}
Assuming an appropriate ordering of the dofs, we further have $C_\Pi^{(k)}=I$.
By substituting this into the optimality system and by eliminating the $\ul\mu^{(k)}$, we obtain
\[
	\begin{pmatrix}
		A_{\Delta\Delta}^{(1)}&     &      &     A_{\Delta\Pi}^{(1)}  R^{(1)}& B_\Delta^{(1)\top} \\
		 & \ddots &  & \vdots& \vdots \\
		&&  A_{\Delta\Delta}^{(K)}&  A_{\Delta\Pi}^{(K)}  R^{(K)} & B_\Delta^{(K)\top}\\
		R^{(1)\top} A_{\Pi \Delta}^{(1)}  & \hdots &  R^{(K)\top} A_{\Pi \Delta}^{(K)} &  A_{\Pi\Pi}\\
		B_\Delta^{(1)}  & \hdots &  B_\Delta^{(K)} & \\
	\end{pmatrix}
	\begin{pmatrix}
		\ul u_\Delta^{(1)} \\
		\vdots \\
		\ul u_\Delta^{(K)} \\
		\ul w_{\Pi} \\ \ul \lambda
	\end{pmatrix}
	=
	\begin{pmatrix}
		\ul f_\Delta^{(1)}  \\
		\vdots \\
		\ul f_\Delta^{(K)}  \\
		0 \\ 0
	\end{pmatrix},
\]
where $A_{\Pi\Pi}:=\sum_{k=1}^K R^{(k)\top} A_{\Pi\Pi}^{(k)}R^{(k)}$ is obtained by assembling the contributions from all patches.
Using a basis transformation guaranteeing the orthogonality of primal and dual basis functions, we obtain
\begin{equation}\label{eq:3:kkt2}
	\begin{pmatrix}
		A_{\Delta\Delta}^{(1)}&     &      &     & B_\Delta^{(1)\top} \\
		 & \ddots &  & & \vdots \\
		&&  A_{\Delta\Delta}^{(K)}&   & B_\Delta^{(K)\top}\\
		&  &  &  S_{\Pi\Pi} & B_\Pi^\top \\
		B_\Delta^{(1)}  & \hdots &  B_\Delta^{(K)} & B_\Pi \\
	\end{pmatrix}
	\begin{pmatrix}
		\ul w_\Delta^{(1)} \\
		\vdots \\
		\ul w_\Delta^{(K)} \\
		\ul w_{\Pi} \\ \ul \lambda
	\end{pmatrix}
	=
	\begin{pmatrix}
		\ul f_\Delta^{(1)}  \\
		\vdots \\
		\ul f_\Delta^{(K)}  \\
		\ul f_\Pi \\ 0
	\end{pmatrix},
\end{equation}
where $S_{\Pi\Pi} :=\sum_{k=1}^K R^{(k)\top} S_{\Pi\Pi}^{(k)} R^{(k)}$ with
$S_{\Pi\Pi}^{(k)} :=A_{\Pi\Pi}^{(k)}-A_{\Pi \Delta}^{(k)}A_{\Delta\Delta}^{(k)-1} A_{\Delta\Pi}^{(k)}$,
and
$\ul f_\Pi  := - \sum_{k=1}^K	R^{(k)\top} A_{\Pi \Delta}^{(k)}A_{\Delta\Delta}^{(k)-1} \ul f_\Delta^{(k)} $,
and
$
		B_{\Pi}
		:=-
		\sum_{k=1}^K
			B_{\Delta}^{(k)}
			A_{\Delta\Delta}^{(k)-1}
			A_{\Delta\Pi}^{(k)}
		R^{(k)}.
$
The solution is then recovered from~\eqref{eq:3:kkt2} via
$
		\ul u_\Delta^{(k)}
		=
		\ul w_\Delta^{(k)}
		-
		A_{\Delta\Delta}^{(k)-1}A_{\Delta\Pi}^{(k)}R^{(k)} \ul w_\Pi
$.
In order to solve the overall problem, the Schur complement formulation
\[
		F \ul \lambda = \ul b
\]
with
\[
		F := \sum_{k=1}^K
		B_\Delta^{(k)}
		A_{\Delta\Delta}^{(k)-1}
		B_\Delta^{(k)\top}
		+ B_{\Pi} S_{\Pi\Pi}^{-1} B_{\Pi}^\top
\]
and
$
		\ul b :=
		\sum_{k=1}^K
		 B_\Delta^{(k)} A_{\Delta\Delta}^{(k)-1}\ul f_\Delta^{(k)}
		+ B_{\Pi} S_{\Pi\Pi}^{-1} f_{\Pi}
$
is solved with a preconditioned conjugate gradient solver.
Next, we observe that the constraints do not refer to the interior degrees of freedom (black circular dofs in Figure~\ref{fig:schema2}), therefore we subdivide the group ``$\Delta$'' further into interior degrees of freedom (``$I$'')  and interface degrees of freedom (``$\Gamma$'').
Using
\[
		A_{\Delta\Delta}^{(k)}
		=
		\begin{pmatrix}
			A_{II}^{(k)} & A_{I\Gamma}^{(k)}\\
			A_{\Gamma I}^{(k)} & A_{\Gamma\Gamma}^{(k)}\\
		\end{pmatrix},
		\qquad
		B_{\Delta}^{(k)}
		=
		\begin{pmatrix}
			0 & B_{\Gamma}^{(k)}\\
		\end{pmatrix},
\]
and
$S_{\Gamma\Gamma}^{(k)} := A_{\Gamma\Gamma}^{(k)}-A_{\Gamma I}^{(k)} A_{II}^{(k)-1}A_{I\Gamma}^{(k)}$, a block Gaussian elimination immediately yields
\begin{equation}\label{eq:3:Fdef}
		F = \sum_{k=1}^K
		B_\Gamma^{(k)}
		S_{\Gamma\Gamma}^{(k)-1} \\
		B_\Gamma^{(k)\top}
		+ B_{\Pi} S_{\Pi\Pi}^{-1} B_{\Pi}^\top.
\end{equation}
Motivated by this observation, we choose
\begin{equation}\label{eq:3:Mdef}
		M :=
		\frac{1}{4}
		\sum_{k=1}^K
		B_\Gamma^{(k)}
		S_{\Gamma\Gamma}^{(k)} \\
		B_\Gamma^{(k)\top}
\end{equation}
as preconditioner. Here, the factor $\tfrac 14$ corresponds to the idea of multiplicity scaling. In the next sections, we give an estimate for the condition number of $MF$. For the implementation of a conjugate gradient solver, only the ability of computing matrix vector products is required, the conjugate gradient method can be realized efficiently, like explained in Refs.~\refcite{ToselliWidlund:2005a} and~\refcite{SchneckenleitnerTakacs:2020}
for second order problems.

\section{Estimates for the discrete biharmonic extension}
\label{sec:4}

In this section, we compare the norms of different kinds of biharmonic extensions, i.e., functions that minimize the $|\cdot|_{H^2(\wh\Omega)}$-seminorm and that satisfy certain boundary conditions.
Analogously to Refs.~\refcite{SchneckenleitnerTakacs:2020} and \refcite{Nepomnyaschikh:1995}, we prove such results by explicitly giving extension operators.
For simplicity, we use the following notation.
\begin{notation}
	In what follows, $C$ is a generic positive constant that only depends on the constants  $C_G$ from~\eqref{eq:ass:nabla}, $C_T$ from~\eqref{eq:ass:trace}, $C_Q$ from~\eqref{eq:ass:quasi uniform}, and the diameter of~$\Omega$. We write $a\lesssim b$ if $a\le C\,b$ and $a\eqsim b$ if $a\lesssim b \lesssim a$.
\end{notation}

\subsection{Preliminary results}

We start with an approximation error estimate for an $H^2$-orthogonal projection that preserves the boundary data.

\begin{lemma}\label{lem:approxerror}
	Let $p\ge3$ and $\Xi$ be a $p$-open knot vector such that
	$S_{p,\Xi}\subset H^2(0,1)$.
	The  $H^2$-orthogonal projector $\Pi : H^2_0(0,1) \to S_{p,\Xi} \cap H^2_0(0,1)$ satisfies
	$| v|_{H^2(0,1)}^2  = | \Pi v|_{H^2(0,1)}^2  + | v-\Pi v|_{H^2(0,1)}^2$
	and the error estimate
	\[
		\| v-\Pi v\|_{L^2(0,1)}^2 \lesssim \wh h^2 | v|_{H^2(0,1)}^2 \quad\mbox{for all}\quad v\in H^2_0(0,1).
	\]
\end{lemma}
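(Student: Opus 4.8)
The plan is to treat the two assertions separately, starting with the orthogonal-decomposition identity and then extracting the $L^2$ rate by an Aubin--Nitsche duality argument.

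First I would observe that on $H^2_0(0,1)$ the seminorm $|\cdot|_{H^2(0,1)}$ is in fact a norm: if $v\in H^2_0(0,1)$ satisfies $v''=0$, then $v$ is affine, and the clamped conditions $v(0)=v'(0)=0$ force $v\equiv0$. Hence $(u,v)\mapsto\int_0^1 u''v''\,\mathrm dx$ is an inner product on $H^2_0(0,1)$ whose induced norm is $|\cdot|_{H^2(0,1)}$, and $S_{p,\Xi}\cap H^2_0(0,1)$ is a finite-dimensional, thus closed, subspace. By definition $\Pi$ is the orthogonal projector onto this subspace with respect to this inner product, so $v-\Pi v$ is orthogonal to $\Pi v$ and the Pythagorean identity $|v|_{H^2(0,1)}^2=|\Pi v|_{H^2(0,1)}^2+|v-\Pi v|_{H^2(0,1)}^2$ is immediate. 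In particular this gives the contraction $|v-\Pi v|_{H^2(0,1)}\le|v|_{H^2(0,1)}$, to be used at the end, and records the Galerkin orthogonality $\int_0^1(v-\Pi v)''s''\,\mathrm dx=0$ for every $s\in S_{p,\Xi}\cap H^2_0(0,1)$.

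For the error estimate, set $e:=v-\Pi v\in H^2_0(0,1)$ and introduce the dual problem: find $w\in H^2_0(0,1)$ with $\int_0^1 w''\phi''\,\mathrm dx=\int_0^1 e\,\phi\,\mathrm dx$ for all $\phi\in H^2_0(0,1)$, the weak form of the clamped fourth-order ODE $w''''=e$. Since $e\in L^2(0,1)$, one-dimensional elliptic regularity (four integrations) yields $w\in H^4(0,1)\cap H^2_0(0,1)$ with $|w|_{H^4(0,1)}\lesssim\|e\|_{L^2(0,1)}$. Testing with $\phi=e$ gives $\|e\|_{L^2(0,1)}^2=\int_0^1 w''e''\,\mathrm dx$. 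Next I would subtract an arbitrary spline $s\in S_{p,\Xi}\cap H^2_0(0,1)$ using the Galerkin orthogonality from the first step, so that $\|e\|_{L^2(0,1)}^2=\int_0^1(w-s)''e''\,\mathrm dx\le|w-s|_{H^2(0,1)}\,|e|_{H^2(0,1)}$. Choosing $s$ to be a boundary-conforming spline approximant of $w$ with the standard order $|w-s|_{H^2(0,1)}\lesssim\wh h^{\,2}|w|_{H^4(0,1)}$, available because $p\ge3$, and inserting the regularity bound gives $\|e\|_{L^2(0,1)}^2\lesssim\wh h^{\,2}\|e\|_{L^2(0,1)}\,|e|_{H^2(0,1)}$. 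Dividing by $\|e\|_{L^2(0,1)}$ and applying the contraction produces $\|e\|_{L^2(0,1)}\lesssim\wh h^{\,2}|v|_{H^2(0,1)}$; squaring and using $\wh h\le\tfrac14\le1$ from~\eqref{eq:hlequarter} yields the claimed bound (indeed the stronger one with $\wh h^{\,4}$).

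The step I expect to be the main obstacle is the construction of the spline $s$ that simultaneously lies in $S_{p,\Xi}\cap H^2_0(0,1)$, i.e. satisfies the homogeneous clamped conditions, and achieves the $H^2$-approximation order $\wh h^{\,2}$. A generic quasi-interpolant need not respect the boundary data, so I would either invoke an established boundary-preserving spline approximation estimate from isogeometric approximation theory, or build $s$ explicitly from the transformed basis $\Psi_{p,\Xi}$ of~\eqref{eq:psidef}, whose separation of the boundary-value and boundary-derivative degrees of freedom (cf.~\eqref{eq:psi:bdy}) makes it straightforward to correct a standard quasi-interpolant so that $s$ and $s'$ vanish at the endpoints without spoiling the convergence rate. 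Everything else reduces to routine use of the Cauchy--Schwarz inequality and the one-dimensional regularity estimate.
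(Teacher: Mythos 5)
Your proposal is correct and follows the same overall strategy as the paper: establish a boundary-conforming spline approximation estimate $\inf_{s}|w-s|_{H^2(0,1)}\lesssim \wh h^2 |w|_{H^4(0,1)}$ over $S_{p,\Xi}\cap H^2_0(0,1)$, then conclude by Aubin--Nitsche duality (which in 1D is clean, since $|w|_{H^4}=\|w''''\|_{L^2}=\|e\|_{L^2}$ exactly for the clamped dual problem). The division of labor is inverted relative to the paper, though: the duality argument you write out in full is the part the paper dismisses as ``standard,'' whereas the single step you flag as the main obstacle and defer --- producing an approximant that both achieves the $\wh h^2$ rate in $H^2$ and satisfies the homogeneous clamped conditions --- is precisely what the paper's proof actually establishes. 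The paper's device is worth knowing: it defines $v_h$ as the Galerkin projection of $v$ with respect to the modified inner product $(v'',w'')_{L^2(0,1)}+v(0)w(0)+v'(0)w'(0)$, so that $v_h$ is automatically a best $H^2$-seminorm approximant and inherits the rate $\wh h^2|v|_{H^4}$ from Ref.~\refcite{SandeManniSpeelers:2019}; it then verifies $v_h(0)=v_h'(0)=v_h'(1)=v_h(1)=0$ by testing the Galerkin orthogonality with the polynomials $1$, $t$, $t^2/2+t$ and $-t^3/6+t^2/2+t+1$ (the cubic being where $p\ge 3$ enters). Your alternative --- correcting a quasi-interpolant at the four boundary degrees of freedom of the transformed basis~\eqref{eq:psidef} --- does work, but it is not entirely free: one must pair local error bounds such as $|(w-Qw)(0)|\lesssim \wh h^{7/2}|w|_{H^4}$ and $|(w-Qw)'(0)|\lesssim \wh h^{5/2}|w|_{H^4}$ against the scalings $|\psi^{(1)}_{p,\Xi}|_{H^2}\eqsim \wh h^{-3/2}$ and $|\psi^{(2)}_{p,\Xi}|_{H^2}\eqsim \wh h^{-1/2}$ to see that the corrections do not spoil the $\wh h^2$ rate; with that accounting supplied, your route closes and the two proofs are equivalent in substance. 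Your observation that the argument actually delivers $\|v-\Pi v\|_{L^2}\lesssim \wh h^2|v|_{H^2}$, stronger than the stated $\wh h\,|v|_{H^2}$, is also accurate.
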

\begin{proof}
	We use the results from Ref.~\refcite{SandeManniSpeelers:2019}.
	A direct application is not possible since that paper does not consider the boundary conditions.
	Consider the scalar product
	$
		(v,w)_{H^2_{D}(0,1)}
		:=
		(v'',w'')_{L^2(0,1)}
		+
		v(0)w(0)
		+
		v'(0) w'(0).
	$
	For any $v \in H^2(0,1)$, let $v_h\in S_{p,\Xi}$ be such that
	$(v-v_h,w_h)_{H^2_D(0,1)}=0$
	for all $w_h\in S_{p,\Xi}$.
	Since the scalar product minimizes the $H^2$-seminorm, we have using the results
	from Ref.~\refcite{SandeManniSpeelers:2019}  that
	\begin{equation}\nonumber
		| v - v_h |_{H^2(0,1)}
		=
		\inf_{w_h \in S_{p,\Xi}}
		| v - w_h |_{H^2(0,1)}
		\lesssim
		\wh h^{2}
		| v |_{H^4(0,1)}
		\quad \mbox{for all} \quad v\in H^4(0,1).
	\end{equation}
	Let $v\in H^4(0,1)\cap H^2_0(0,1)$ arbitrary but fixed.
	By testing with $w_h(t)=1$, $w_h(t)=t$, $w_h(t)=t^2/2+t$,
	and $w_h(t)=-t^3/6+t^2/2+t+1$,
	we obtain $v_h(0)=v(0)=0$, $v_h'(0)=v'(0)=0$, $v_h'(1)=v'(1)=0$, and $v_h(1)=v(1)=0$.
	This shows $v_h\in S_{p,\Xi}\cap H^2_0(0,1)$ and thus
	\begin{equation}\label{eq:lem:approxerror}
		\inf_{w_h \in S_{p,\Xi} \cap H^2_0(0,1)}
		| v - w_h |_{H^2(0,1)}
		\lesssim
		\wh h^{2}
		| v |_{H^4(0,1)}
		\quad \mbox{for all} \quad v\in H^4(0,1)\cap H^2_0(0,1).
	\end{equation}
	A standard Aubin-Nitsche duality argument shows the desired result.
\end{proof}

For $p=2$, we cannot show the same result for the $H^2$-orthogonal projection. However, there is another projector that satisfies the same error estimate and which admits a stable decomposition with respect to the $H^2$ seminorm.
\begin{lemma}\label{lem:approxerror2}
	Let $p=2$ and $\Xi$ be a $p$-open knot vector with a maximum knot span of $\tfrac14$ (cf.~\eqref{eq:hlequarter}) such that $S_{p,\Xi}\subset H^2(0,1)$. Then, there is a projector $\Pi : H^2_0(0,1) \to S_{p,\Xi} \cap H^2_0(0,1)$
	satisfying
	$| \Pi v|_{H^2(0,1)}^2  + | v-\Pi v|_{H^2(0,1)}^2\lesssim | v|_{H^2(0,1)}^2$
	and the error estimate
	\begin{equation}\label{eq:lem:approxerror2}
		\| v-\Pi v\|_{L^2(0,1)}^2 \lesssim \wh h^2 | v|_{H^2(0,1)}^2 \quad\mbox{for all}\quad v\in H^2_0(0,1).
	\end{equation}
\end{lemma}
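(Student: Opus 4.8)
The plan is to reduce everything, as is natural for $p=2$, to the second derivative. On $H^2_0(0,1)$ the map $v\mapsto v''$ is an isometry onto its image, since $|v|_{H^2(0,1)}=\|v''\|_{L^2(0,1)}$; the conditions $v(0)=v'(0)=0$ serve to reconstruct $v$ from $v''$, while $v(1)=v'(1)=0$ translate into the two linear constraints $\int_0^1 v''\,\mathrm dt=0$ and $\int_0^1 t\,v''\,\mathrm dt=0$. Denote by $W\subset L^2(0,1)$ the resulting codimension-two subspace. Because every function in $S_{2,\Xi}$ has a \emph{piecewise constant} second derivative, the image of $S_{2,\Xi}\cap H^2_0(0,1)$ under $v\mapsto v''$ is exactly $W_h:=\{\sigma:\sigma\text{ piecewise constant on the knot spans}\}\cap W$. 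This reformulation also explains why the argument of Lemma~\ref{lem:approxerror} fails here: there the auxiliary projection is forced into the boundary-condition space by testing against cubic polynomials, and cubics are not available in $S_{2,\Xi}$.

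Next I would construct an explicit, $L^2$-bounded projection $Q:L^2(0,1)\to W_h$ and set $\Pi v:=T(Qv'')$, where $T:W_h\to S_{2,\Xi}\cap H^2_0(0,1)$ inverts the bijective second-derivative map (given $\sigma\in W_h$, the function $T\sigma(t)=\int_0^t(t-r)\sigma(r)\,\mathrm dr$ lies in $S_{2,\Xi}\cap H^2_0(0,1)$, the constraints ensuring the vanishing data at $t=1$). For $Q$ I take the $L^2$-orthogonal projection $P_0$ onto piecewise constants—which already preserves the first constraint, $\int P_0g=\int g$—corrected by a term $c(g)\,\psi$ that restores the second moment; here $\psi$ is a fixed mean-value-free piecewise constant with $\int_0^1 t\,\psi\,\mathrm dt\eqsim1$ and $\|\psi\|_{L^2(0,1)}\eqsim1$, whose existence follows from~\eqref{eq:hlequarter} (at least four knot spans) and quasi-uniformity~\eqref{eq:ass:quasi uniform}. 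One checks that $Q$ reproduces $W_h$, so $\Pi$ is a projector onto $S_{2,\Xi}\cap H^2_0(0,1)$, and that $\|Qg\|_{L^2(0,1)}\lesssim\|g\|_{L^2(0,1)}$. Since $|\Pi v|_{H^2(0,1)}=\|Qv''\|_{L^2(0,1)}$ and $|v-\Pi v|_{H^2(0,1)}=\|v''-Qv''\|_{L^2(0,1)}$, the triangle inequality immediately yields the stable decomposition $|\Pi v|_{H^2(0,1)}^2+|v-\Pi v|_{H^2(0,1)}^2\lesssim|v|_{H^2(0,1)}^2$. Note that $Q$ is deliberately \emph{not} the $L^2$-orthogonal projection onto $W_h$, as that choice would reproduce the $H^2$-orthogonal projection and leave us with the obstruction of Lemma~\ref{lem:approxerror}.

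For the error estimate~\eqref{eq:lem:approxerror2} I would argue by duality. With $e:=v-\Pi v\in H^2_0(0,1)$, let $\phi\in H^2_0(0,1)\cap H^4(0,1)$ solve $\phi''''=e$, so that $|\phi|_{H^4(0,1)}\lesssim\|e\|_{L^2(0,1)}$; integrating by parts twice gives $\|e\|_{L^2(0,1)}^2=(v''-Qv'',\phi'')_{L^2(0,1)}$. Writing $\phi''=(\phi''-P_0\phi'')+P_0\phi''$, the first contribution is bounded by $\|v''-Qv''\|_{L^2(0,1)}\,\|\phi''-P_0\phi''\|_{L^2(0,1)}\lesssim\|v''\|_{L^2(0,1)}\,\wh h\,|\phi|_{H^3(0,1)}\lesssim\wh h\,\|v''\|_{L^2(0,1)}\,\|e\|_{L^2(0,1)}$ by the standard piecewise-constant approximation estimate. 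The second contribution is the delicate term: since $v''-P_0v''$ is $L^2$-orthogonal to every piecewise constant, it collapses to $-c(v'')\,(\psi,P_0\phi'')_{L^2(0,1)}$, and the correction coefficient obeys $|c(v'')|\lesssim\wh h\,\|v''\|_{L^2(0,1)}$ because it measures the second-moment defect $\int_0^1 t\,P_0v''\,\mathrm dt=-\int_0^1(t-P_0t)(v''-P_0v'')\,\mathrm dt$ of $P_0v''$, which is $O(\wh h)\|v''\|_{L^2(0,1)}$ thanks to $\int_0^1 t\,v''\,\mathrm dt=0$. With $\|P_0\phi''\|_{L^2(0,1)}\lesssim\|e\|_{L^2(0,1)}$ this term is of the same order, and combining both gives $\|e\|_{L^2(0,1)}\lesssim\wh h\,|v|_{H^2(0,1)}$. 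I expect this cross term—the price for using a non-orthogonal $Q$ to obtain an explicit projector—to be the main obstacle, with the cancellation $(v''-P_0v'')\perp\text{piecewise constants}$ and the smallness of the moment defect being the decisive observations.
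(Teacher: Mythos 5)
Your proposal is correct, but it proves the lemma by a genuinely different route than the paper. The paper takes $\Pi$ to be the $L^2$-orthogonal projection onto $S_{2,\Xi}\cap H^2_0(0,1)$ and never constructs it explicitly: it compares $\Pi$ with the $H^1$-orthogonal projection $\Pi_1$ onto $S_{2,\Xi}\cap H^1_0(0,1)$ (whose $L^2$-error $\lesssim \wh h^2|v|_{H^2}$ is quoted), repairs the missing derivative boundary conditions at $t=0$ and $t=1$ by adding multiples of the small spline $\psi_{1,L}$ from~\eqref{eq:def:psi} with $\|\psi_{1,L}\|_{L^2(0,1)}\lesssim \wh h^{3/2}$, and controls the size of these corrections by comparison with the $H^2$-projection $\Pi_2$ onto the auxiliary degree-$3$ space $S_{3,\widetilde\Xi}$ via Lemma~\ref{lem:schwabinverse}; stability then follows from an inverse estimate. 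You instead build the projector explicitly through the isometry $v\mapsto v''$ onto the codimension-two subspace $W$, take the $L^2$-projection $P_0$ onto piecewise constants plus a rank-one moment correction $c(g)\psi$, and prove~\eqref{eq:lem:approxerror2} by an Aubin--Nitsche duality argument in which the non-orthogonality of $Q$ is absorbed by the orthogonality $(v''-P_0v'')\perp P_0\phi''$ and the bound $|c(v'')|\lesssim \wh h\,|v|_{H^2}$ on the moment defect. Your argument is more self-contained (it needs neither the degree-$3$ auxiliary space nor the quoted $O(\wh h^2)|v|_{H^4}$ spline approximation results, only piecewise-constant approximation and one-dimensional biharmonic regularity on a fixed interval), at the price of the hand-crafted duality step; the paper's argument is shorter because it leans on Lemma~\ref{lem:approxerror} and the machinery already in place. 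The only points you should spell out are the existence of $\psi$ (an interior knot in, say, $[\tfrac14,\tfrac12]$ exists by~\eqref{eq:hlequarter}, and a two-level step function built on it has $\int_0^1\psi\,\mathrm dt=0$, $\int_0^1 t\,\psi\,\mathrm dt=\tfrac12$, $\|\psi\|_{L^2(0,1)}\eqsim 1$) and the regularity bound $|\phi|_{H^3(0,1)}\lesssim\|e\|_{L^2(0,1)}$, which follows from $\|\phi''\|_{L^2}^2=(\phi'''',\phi)_{L^2}$, Poincar\'e, and interpolation; both are routine.
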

Since the proof of this Lemma requires estimates that will be introduced later, we give it only in the next subsection.
Moreover, we use a standard inverse estimate.

\begin{lemma}\label{lem:schwabinverse}
	Let $0\le s \le r \le p$ be integers and let $\Xi$ be a $p$-open knot vector such that $S_{p,\Xi}\subset H^r(0,1)$. Then, we have
	\[
		|v_h|_{H^r(0,1)} \le \left(2\sqrt{3} \, p^2\,\wh h_{\min}^{-1} \right)^{r-s} |v_h|_{H^s(0,1)}
		\quad\mbox{for all}\quad
		v_h \in S_{p,\Xi}.
	\]
\end{lemma}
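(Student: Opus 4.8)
The plan is to prove the estimate first for the single step $r=s+1$ and then to iterate it $r-s$ times. Fix $v_h\in S_{p,\Xi}$ and let $I_1,\dots,I_M$ denote the non-empty knot spans of $\Xi$, so that $|I_j|\ge \wh h_{\min}$ for every $j$ by the definition of $\wh h_{\min}$. Since $S_{p,\Xi}\subset H^r(0,1)$, each derivative $v_h^{(s)}$ with $0\le s\le r-1$ lies in $H^1(0,1)$; in particular it is globally continuous and its weak derivative agrees almost everywhere with the element-wise classical derivative (there are no jump contributions). Consequently
\[
	|v_h|_{H^{s+1}(0,1)}^2
	= \|(v_h^{(s)})'\|_{L^2(0,1)}^2
	= \sum_{j=1}^M \|(v_h^{(s)})'\|_{L^2(I_j)}^2 ,
\]
and on each $I_j$ the function $v_h^{(s)}$ is a polynomial of degree at most $p$. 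Thus the single step is reduced to a purely local, element-wise polynomial inverse inequality.

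The core ingredient is the Markov-type $L^2$ inequality on one interval: for every polynomial $q$ of degree at most $p$ and every interval $I$,
\[
	\|q'\|_{L^2(I)} \le \frac{2\sqrt 3\, p^2}{|I|}\,\|q\|_{L^2(I)} .
\]
I would obtain this by mapping $I$ affinely onto the reference interval $(0,1)$; the change of variables turns the claim into $\|\wh q'\|_{L^2(0,1)}\le 2\sqrt 3\, p^2\,\|\wh q\|_{L^2(0,1)}$ for polynomials $\wh q$ of degree at most $p$, which is exactly the reference inverse inequality found in Schwab's monograph. As a consistency check, for $p=1$ a short computation in the monomial basis gives the sharp reference constant $2\sqrt 3$, so the bound is attained in that case. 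The hard part of the proof is precisely this reference constant, i.e.\ controlling $\|\wh q'\|_{L^2(0,1)}/\|\wh q\|_{L^2(0,1)}$ by $2\sqrt 3\, p^2$ uniformly in the degree; one establishes it by expanding $\wh q$ in Legendre polynomials and estimating the derivative map, or one simply cites it. Everything surrounding it is routine affine scaling and summation.

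Combining the two ingredients with $|I_j|\ge \wh h_{\min}$ gives, for each $j$,
\[
	\|(v_h^{(s)})'\|_{L^2(I_j)}^2
	\le \Big(\frac{2\sqrt 3\, p^2}{\wh h_{\min}}\Big)^2\,\|v_h^{(s)}\|_{L^2(I_j)}^2 ,
\]
and summing over $j$ (the $I_j$ are disjoint and cover $(0,1)$ up to the knots, so the element $L^2$ norms add up to the global one) yields the one-step bound
\[
	|v_h|_{H^{s+1}(0,1)} \le 2\sqrt 3\, p^2\, \wh h_{\min}^{-1}\, |v_h|_{H^s(0,1)} .
\]
Applying this successively for the indices $s,s+1,\dots,r-1$ — which is legitimate because every intermediate derivative again lies in $H^1(0,1)$ and is piecewise polynomial of degree at most $p$ on the same mesh — multiplies the constants and produces the factor $\big(2\sqrt 3\, p^2\, \wh h_{\min}^{-1}\big)^{r-s}$, which is the assertion.
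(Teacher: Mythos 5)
Your proposal is correct and follows essentially the same route as the paper: both reduce the claim to the reference polynomial inverse inequality from Schwab's monograph (Theorem 3.91), transfer it to each knot span by affine scaling, sum the squared local norms, and iterate over the derivative orders $s,s+1,\dots,r-1$ using that derivatives of splines are again piecewise polynomials on the same mesh. Your write-up merely spells out a few details the paper leaves implicit (the absence of jump terms for the intermediate derivatives and the explicit scaling computation), so there is nothing to add.
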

\begin{proof}
	A proof is given in Theorem 3.91 of Ref.~\refcite{Schwab:1998} for the case of a polynomial of degree $p$ and for $r=1$ and $s=0$.
	An extension to larger values for $r$ and $s$ is straight forward since derivatives of polynomials are polynomials with smaller degree.
	The estimate for splines follows by taking a sum over all knot spans.
\end{proof}

\subsection{Extension operator for parameter domain}

Next, consider a tensor-product B-spline space
$\wh V := \mathrm{span}\,\Psi_{p,\mathbf \Xi}\subset H^2(\wh\Omega)$,
where $\mathbf \Xi = (\Xi_1,\Xi_2)=(\Xi_{k,1},\Xi_{k,2})$ is the knot vector for one of the patches $\Omega_k$.
Since we consider one fixed patch in this section, we omit  the index $k$ referring to the patch here and in the remainder of this subsection. The local function space is defined analogous to~\eqref{eq:Vkdef}.
Let
\begin{equation}\label{eq:vdelta def}
		\wh V_{\Delta}
		:=
		\{
			v \in \wh V :
			v(\wh\xx)=\partial_{\wh x} v(\wh \xx) = \partial_{\wh y} v(\wh \xx) = \partial_{\wh x\wh y}v(\wh \xx) = 0
			\mbox{ for all }
			\wh\xx \in \{0,1\}^2
		\}
\end{equation}
be the subspace of functions where the function values, the first derivatives and the mixed derivatives vanish on all corners.
In the following, we construct bounded extensions from $\wh \Gamma = \{0\}\times [0,1]$ into $\wh \Omega = (0,1)^2$.
Let
\[
		\wh V_0
			:=
		\{
			v \in \wh V_\Delta :
			v|_{\partial\wh\Omega \setminus \wh \Gamma} = 0,
			\partial_n v|_{\partial\wh\Omega \setminus \wh \Gamma} = 0
		\}
\]
be the functions that vanish on all interfaces but the interface $\wh \Gamma$.
Let
\[
	\wh W:=\{ v|_{\wh \Gamma} : v \in \wh V_\Delta \} =\{ \partial_n v |_{\wh \Gamma} : v \in \wh V_\Delta\} \subset H^2_0(\wh \Gamma)
\]
be the associated trace space for the function values and the normal derivatives.
Let
\begin{equation}\label{eq:big L def}
		L:=\lfloor \log_2 (\wh h^{-1}) \rfloor - 1;
\end{equation}
we know $L\ge 1$ because of~\eqref{eq:hlequarter}.
Recall that $\Xi_1$ is the knot vector of knots in $\wh x$-direction, i.e., perpendicular to $\wh \Gamma$.
For $\ell\in\{1,\dots,L\}$,  let
\begin{equation}\label{eq:xi eta def}
	\mu_\ell:= p 2^{\ell},
	\quad
	\xi_\ell := \max \{ \xi \in \Xi_1: \xi \le 2^{-\ell} \},
	\quad
	\eta_\ell := \max \{ \xi \in \Xi_1: \xi \le 2^{-\ell+1} \}.
\end{equation}
For $\alpha\in\{0,1\}$ and $\ell\in\{1,\dots,L\}$, define the extension operators via
\begin{equation}\label{eq:def simpleext}
		\wh E_{\alpha,\ell} : \wh W \to C^1(\wh\Omega)
		\quad\mbox{with}\quad
		(\wh E_{\alpha,\ell} w)(\wh x,\wh y) := \psi_{\alpha,\ell}(\wh x) w(\wh y)
		\quad \mbox{for all} \quad w\in \wh W,
\end{equation}
where
\begin{equation}\label{eq:def:psi}
\begin{aligned}
	\psi_{0,\ell}(x)
		& :=
			- \frac{\xi_\ell}{\eta_\ell-\xi_\ell} \max(0,1-x/\xi_\ell)^p
			+ \frac{\eta_\ell}{\eta_\ell-\xi_\ell} \max(0,1-x/\eta_\ell)^p,
		 \\
	\psi_{1,\ell}(x)
		& :=
			\frac{\xi_\ell \eta_\ell}{p(\eta_\ell-\xi_\ell)}  \max(0,1-x/\xi_\ell)^p
			- \frac{\xi_\ell \eta_\ell}{p(\eta_\ell-\xi_\ell)} \max(0,1-x/\eta_\ell)^p.
\end{aligned}
\end{equation}
First, we show that the operators are indeed extension operators.
\begin{lemma}\label{lem:41}
	For all $\ell\in\{1,\ldots,L\}$ and all $w\in \wh W$, we have
	\begin{align*}
		\wh E_{0,\ell} &: \wh W \to \wh V_0 \quad \mbox{with}\quad
		(\wh E_{0,\ell} w)|_{\wh\Gamma} = w, \quad
		\partial_n (\wh E_{0,\ell} w)|_{\wh\Gamma} = 0 , \\
		\wh E_{1,\ell} &: \wh W \to \wh V_0 \quad \mbox{with}\quad
		(\wh E_{1,\ell} w)|_{\wh\Gamma} = 0, \quad
		\partial_n (\wh E_{1,\ell} w)|_{\wh\Gamma} = w .
	\end{align*}
\end{lemma}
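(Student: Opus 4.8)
The plan is to exploit the tensor-product structure $(\wh E_{\alpha,\ell}w)(\wh x,\wh y)=\psi_{\alpha,\ell}(\wh x)\,w(\wh y)$ and to reduce every assertion to a property of the univariate factor $\psi_{\alpha,\ell}$ together with the fact that $w\in\wh W\subset H^2_0(\wh\Gamma)$. First I would record the four boundary values of the univariate factors by differentiating \eqref{eq:def:psi} directly: since $\max(0,1-x/a)^p$ and its derivative at $x=0$ equal $1$ and $-p/a$, a short computation gives $\psi_{0,\ell}(0)=1$, $\psi_{0,\ell}'(0)=0$, $\psi_{1,\ell}(0)=0$ and $\psi_{1,\ell}'(0)=-1$. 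On $\wh\Gamma=\{0\}\times[0,1]$ the outer normal is $(-1,0)$, so $\partial_n=-\partial_{\wh x}$, and these four values yield exactly the prescribed traces: $(\wh E_{0,\ell}w)|_{\wh\Gamma}=w$, $\partial_n(\wh E_{0,\ell}w)|_{\wh\Gamma}=0$, $(\wh E_{1,\ell}w)|_{\wh\Gamma}=0$ and $\partial_n(\wh E_{1,\ell}w)|_{\wh\Gamma}=w$.

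Next I would show $\wh E_{\alpha,\ell}w\in\wh V$. The points $\xi_\ell$ and $\eta_\ell$ from \eqref{eq:xi eta def} are knots of $\Xi_1$, and each function $\max(0,1-x/\xi_\ell)^p$ is a piecewise polynomial of degree $p$ that is $C^{p-1}$ at $\xi_\ell$ and smooth elsewhere, hence lies in $S_{p,\Xi_1}$; the same holds with $\eta_\ell$. As $\psi_{\alpha,\ell}$ is a linear combination of these, $\psi_{\alpha,\ell}\in S_{p,\Xi_1}$, and its coefficients are well defined because $\xi_\ell<\eta_\ell$: for $\ell\le L$ the dyadic interval $(2^{-\ell},2^{-\ell+1}]$ has length $2^{-\ell}\ge 2\wh h$, which exceeds the maximal knot span $\wh h$, so it must contain a knot of $\Xi_1$, forcing $\eta_\ell>\xi_\ell$. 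Since $w$ is the trace on $\wh\Gamma$ of a function in $\wh V_\Delta\subset\wh V$, it is a univariate spline in $S_{p,\Xi_2}$; thus the product $\psi_{\alpha,\ell}(\wh x)\,w(\wh y)$ lies in $S_{p,\Xi_1}\otimes S_{p,\Xi_2}=\wh V$.

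It then remains to verify the homogeneous conditions defining $\wh V_0$, i.e. vanishing value and normal derivative on the three edges $\partial\wh\Omega\setminus\wh\Gamma$ together with the corner conditions of $\wh V_\Delta$. Because $w\in H^2_0(\wh\Gamma)$ we have $w(0)=w(1)=w'(0)=w'(1)=0$, which makes both $\wh E_{\alpha,\ell}w$ and its $\wh y$-derivative vanish on the bottom edge $\wh y=0$ and the top edge $\wh y=1$; the same vanishing of $w$ and $w'$ at the endpoints, combined with the product rule, kills the value, both first derivatives and the mixed derivative at the two corners lying on $\wh\Gamma$, so those corners satisfy the $\wh V_\Delta$ conditions. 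For the remaining (right) edge $\wh x=1$ and the two corners there I would use that $\psi_{\alpha,\ell}$ is supported in $[0,\eta_\ell]$.

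The one point needing care is exactly this right edge. For $\ell\ge2$ we have $\eta_\ell\le 2^{-\ell+1}\le\tfrac12<1$, so $\psi_{\alpha,\ell}$ vanishes identically in a neighborhood of $\wh x=1$ and all conditions there are trivial. For $\ell=1$, however, $\eta_1=1$ and $\psi_{\alpha,1}$ does not vanish near $\wh x=1$; instead both surviving $\max(\cdot)^p$ terms are proportional to $(1-\wh x)^p$, which has a zero of order $p$ at $\wh x=1$. Since $p\ge2$, this still forces $\psi_{\alpha,1}(1)=\psi_{\alpha,1}'(1)=0$, hence vanishing value and normal derivative on the right edge and vanishing value, gradient and mixed derivative at the two right corners. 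Collecting the three steps gives $\wh E_{\alpha,\ell}w\in\wh V_0$ with the asserted traces. No estimate is involved; the whole argument is a direct verification, and the only mildly subtle ingredient is the order-$p$ zero at $\wh x=1$ in the case $\ell=1$.
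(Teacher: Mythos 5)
Your proof is correct and follows essentially the same route as the paper's: verify that each $\psi_{\alpha,\ell}$ lies in $\mathrm{span}\,\Psi_{p,\Xi_1}$ with the stated values and derivatives at $\wh x=0$ and $\wh x=1$, and then use $w\in\wh W\subset H^2_0(\wh\Gamma)$ to kill the remaining edges and corners. You are in fact slightly more careful than the paper in the case $\ell=1$, where $\eta_1=1$ (so the paper's intermediate claim $\eta_\ell<1$ fails) and one genuinely needs the order-$p$ zero of $(1-\wh x)^p$ with $p\ge 2$ to get $\psi_{\alpha,1}(1)=\psi_{\alpha,1}'(1)=0$; the only micro-omission on your side is that $\xi_\ell>0$ (needed for the truncated powers to make sense) requires the same knot-density argument you already used to show $\eta_\ell>\xi_\ell$, now applied to the interval $(2^{-\ell-1},2^{-\ell}]$.
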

\begin{proof}
	Using~\eqref{eq:big L def}, \eqref{eq:xi eta def} and $\wh h \le 2^{-L-1}$, we know
	\begin{equation}\label{eq:xi eta facts}
		0 < 2^{-\ell-1}  < \xi_\ell \le 2^{-\ell} < \eta_\ell \le 2^{-\ell+1} \le 1
		\quad\mbox{and}\quad
		\eta_\ell-\xi_\ell \ge 2^{-\ell-1},
	\end{equation}
	and thus $0 < \xi_\ell < \eta_\ell < 1$.
	So, $\psi_{\alpha,\ell}\in \mathrm{span}\, \Psi_{p,\Xi_1}$ are splines which satisfy
	$\psi_{\alpha,\ell}(0)=\delta_{\alpha,0}$, $\psi_{\alpha,\ell}'(0)=\delta_{\alpha,1}$,
	$\psi_{\alpha,\ell}(1)=0$ and $\psi_{\alpha,\ell}'(1)=0$.
	From this observation,
	it immediately follows that $\wh E_{\alpha,\ell}w$ is a tensor-product B-spline and satisfies the desired boundary conditions on $\wh \Gamma=\{0\}\times[0,1]$ and the opposite side $\{1\}\times[0,1]$.
	The boundary conditions for the two remaining sides follow since the function values and the normal derivatives of functions in $\wh W \subset H^2_0(\wh \Gamma)$ vanish at $\partial\wh\Gamma=\{(0,0),(0,1)\}$.
\end{proof}
In order to estimate the norm of the extension operators, the following lemma is helpful.
\begin{lemma}\label{lem:l1 linf estim}
	For $\ell \in \{1,\ldots,L\}$, $\alpha\in\{0,1\}$ and $j\in\{0,1,2\}$, we have
	\begin{align*}
		\|\partial^j \psi_{\alpha,\ell}\|_{L^\infty(0,1)}
		& \le p^{j-\alpha} \frac{\eta_\ell^{1-j}\xi_\ell^\alpha+\xi_\ell^{1-j}\eta_\ell^\alpha}{\eta_\ell-\xi_\ell}
		\lesssim
		\,\mu_\ell^{j-\alpha},\\
		\|\partial^j \psi_{\alpha,\ell}\|_{L^1(0,1)}
		& \le
		p^{j-\alpha-1}
		\frac{\eta_\ell^{2-j}\xi_\ell^\alpha+\xi_\ell^{2-j}\eta_\ell^\alpha}{\eta_\ell-\xi_\ell}
		\lesssim
		\,\mu_\ell^{j-1-\alpha}.
	\end{align*}
\end{lemma}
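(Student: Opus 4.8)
The plan is to reduce everything to the single building block $g_a(x):=\max(0,1-x/a)^p$ with $a\in\{\xi_\ell,\eta_\ell\}$, since by \eqref{eq:def:psi} both $\psi_{0,\ell}$ and $\psi_{1,\ell}$ are explicit linear combinations of $g_{\xi_\ell}$ and $g_{\eta_\ell}$. On its support $[0,a]$ one computes directly $\partial^j g_a(x)=(-1)^j \tfrac{p!}{(p-j)!}\,a^{-j}(1-x/a)^{p-j}$, which is well defined pointwise almost everywhere for $j\in\{0,1,2\}$ because $j\le p$ (for $p=2,\,j=2$ the second derivative is merely piecewise constant, but still bounded). The factor $(1-x/a)^{p-j}$ is nonnegative and decreasing on $[0,a]$.

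First I would establish the two elementary one-term bounds. The supremum of $|\partial^j g_a|$ is attained at $x=0$, giving $\|\partial^j g_a\|_{L^\infty(0,1)}=\tfrac{p!}{(p-j)!}\,a^{-j}$. For the $L^1$ norm, the substitution $u=1-x/a$ yields $\|\partial^j g_a\|_{L^1(0,1)}=\tfrac{p!}{(p-j)!}\,a^{-j}\int_0^a (1-x/a)^{p-j}\,\mathrm dx = \tfrac{p!}{(p-j+1)!}\,a^{1-j}$. I would then record the crude factorial bounds $\tfrac{p!}{(p-j)!}\le p^{j}$ and $\tfrac{p!}{(p-j+1)!}\le p^{j-1}$, which, together with the extra factor $p^{-1}$ carried by $\psi_{1,\ell}$, supply exactly the prefactors $p^{j-\alpha}$ and $p^{j-\alpha-1}$ appearing in the claim.

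Next I would assemble the estimates by the triangle inequality applied to \eqref{eq:def:psi}. For $\psi_{0,\ell}$ the coefficients are $\xi_\ell/(\eta_\ell-\xi_\ell)$ and $\eta_\ell/(\eta_\ell-\xi_\ell)$, so $\|\partial^j\psi_{0,\ell}\|_{L^\infty}\le \tfrac{p!}{(p-j)!}\tfrac{\xi_\ell^{1-j}+\eta_\ell^{1-j}}{\eta_\ell-\xi_\ell}$; for $\psi_{1,\ell}$ the common coefficient $\xi_\ell\eta_\ell/(p(\eta_\ell-\xi_\ell))$ turns $a^{-j}$ into $\xi_\ell\eta_\ell\,a^{-j}=\eta_\ell\,\xi_\ell^{1-j}$ (resp.\ $\xi_\ell\,\eta_\ell^{1-j}$). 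In both cases the numerator that appears is precisely $\eta_\ell^{1-j}\xi_\ell^\alpha+\xi_\ell^{1-j}\eta_\ell^\alpha$, matching the stated $L^\infty$ bound; the $L^1$ bound follows identically with $\tfrac{p!}{(p-j+1)!}a^{1-j}$ in place of $\tfrac{p!}{(p-j)!}a^{-j}$, which raises every exponent of $\xi_\ell,\eta_\ell$ by one and produces the numerator $\eta_\ell^{2-j}\xi_\ell^\alpha+\xi_\ell^{2-j}\eta_\ell^\alpha$.

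Finally, for the asymptotic bounds $\lesssim\mu_\ell^{j-\alpha}$ and $\lesssim\mu_\ell^{j-1-\alpha}$ I would invoke \eqref{eq:xi eta facts}, which gives $\xi_\ell\eqsim\eta_\ell\eqsim 2^{-\ell}$ and $\eta_\ell-\xi_\ell\gtrsim 2^{-\ell}$; substituting these into the explicit fractions collapses each ratio to a single power of $2^{\ell}$, and recalling $\mu_\ell=p2^{\ell}$ from \eqref{eq:xi eta def} the powers of $p$ and of $2^{\ell}$ combine into the asserted powers of $\mu_\ell$. There is no essential analytic difficulty here: the only thing that needs care is the bookkeeping of the exponents of $\xi_\ell,\eta_\ell$ and the correct pairing of the factorial prefactors with $p^{j-\alpha}$ versus $p^{j-\alpha-1}$, so the main obstacle is combinatorial rather than conceptual.
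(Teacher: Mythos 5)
Your proposal is correct and follows essentially the same route as the paper: differentiate the building blocks $\max(0,1-\cdot/t)^p$ via the chain rule, use $\|\max(0,1-\cdot/t)^{p-j}\|_{L^\infty(0,1)}=1$ and $\|\max(0,1-\cdot/t)^{p-j}\|_{L^1(0,1)}=t/(p-j+1)$, assemble with the triangle inequality, and then invoke \eqref{eq:xi eta facts} and $\mu_\ell=p2^\ell$ for the asymptotic bounds. You simply spell out the factorial bookkeeping ($p!/(p-j)!\le p^j$, $p!/(p-j+1)!\le p^{j-1}$) that the paper leaves implicit, and your exponent accounting matches the stated numerators exactly.
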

\begin{proof}
	The first estimates of the $L^\infty$-norm uses that
	$\|\mathrm{max}(0,1-\cdot/t)^p\|_{L^\infty(0,1)}=1$ for $t\in\{\xi_\ell,\eta_\ell\}$,
	 the triangle inequality and the chain rule.
	The first estimates of the $L^1$-norm uses that
	$\|\mathrm{max}(0,1-\cdot/t)^p\|_{L^1(0,1)}=t/(1+p)\le t/p$ for $t\in\{\xi_\ell,\eta_\ell\}$,
	the triangle inequality and the chain rule. The second estimates follow using~\eqref{eq:xi eta facts} and the definition of $\mu_\ell$ in~\eqref{eq:xi eta def}.
\end{proof}
Next, we are able to give an estimate.
\begin{lemma}\label{lem:42}
	For all $\alpha \in\{0,1\}$ and all $w_1,\ldots,w_{L} \in \wh W$,
	we have
	\begin{align*}
		\left|\sum_{\ell=1}^{L} \wh E_{\alpha,\ell} w_\ell \right|_{H^2(\wh\Omega)}^2
		 &\lesssim
		\sum_{\ell=1}^{L}
		\mu_\ell^{-1-2\alpha}
		\left(
		\mu_\ell^4\|w_\ell \|_{L^2(\wh \Gamma)} ^2
		+
		|w_\ell |_{H^{2}(\wh \Gamma)} ^2
		\right) .
	\end{align*}
\end{lemma}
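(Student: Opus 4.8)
The plan is to expand the squared seminorm of $u:=\sum_{\ell=1}^{L}\wh E_{\alpha,\ell}w_\ell=\sum_\ell\psi_{\alpha,\ell}(\wh x)w_\ell(\wh y)$ into its three Hessian contributions. Writing out the entries, $\partial_{\wh x\wh x}u=\sum_\ell\psi_{\alpha,\ell}''w_\ell$, $\partial_{\wh x\wh y}u=\sum_\ell\psi_{\alpha,\ell}'w_\ell'$ and $\partial_{\wh y\wh y}u=\sum_\ell\psi_{\alpha,\ell}w_\ell''$, so that $|u|_{H^2(\wh\Omega)}^2$ splits into three double sums over $k,\ell\in\{1,\dots,L\}$ of the form $\big(\int_0^1\psi_{\alpha,k}^{(j)}\psi_{\alpha,\ell}^{(j)}\,\mathrm d\wh x\big)\big(\int_0^1 w_k^{(j)}w_\ell^{(j)}\,\mathrm d\wh y\big)$ with $j=2,1,0$ respectively (the mixed term carrying an extra factor $2$). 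I would estimate the $\wh y$-integral by Cauchy--Schwarz, producing the weights $\|w_k\|_{L^2(\wh\Gamma)}\|w_\ell\|_{L^2(\wh\Gamma)}$, $|w_k|_{H^1(\wh\Gamma)}|w_\ell|_{H^1(\wh\Gamma)}$ and $|w_k|_{H^2(\wh\Gamma)}|w_\ell|_{H^2(\wh\Gamma)}$. For the $\wh x$-integral the crucial point is that the supports are \emph{nested}: by~\eqref{eq:xi eta facts}, $\operatorname{supp}\psi_{\alpha,\ell}\subseteq[0,\eta_\ell]\subseteq[0,2^{-\ell+1}]$, so for $k\le\ell$ the support of the finer function sits inside that of the coarser one. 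Hence I would bound $\big|\int\psi_{\alpha,k}^{(j)}\psi_{\alpha,\ell}^{(j)}\big|$ by the asymmetric Hölder split $\|\psi_{\alpha,k}^{(j)}\|_{L^\infty(0,1)}\,\|\psi_{\alpha,\ell}^{(j)}\|_{L^1(0,1)}$, deliberately placing the $L^1$-norm on the more localized (larger-$\ell$) factor.

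Applying Lemma~\ref{lem:l1 linf estim} then gives, for $k\le\ell$, the kernel bound $\mu_k^{j-\alpha}\mu_\ell^{j-1-\alpha}$ for each $j$; by symmetry the general kernel is $a^{(j)}_{k\ell}\lesssim\mu_{\min(k,\ell)}^{j-\alpha}\mu_{\max(k,\ell)}^{j-1-\alpha}$. Since $\mu_\ell=p2^\ell$, this kernel carries a built-in geometric factor $\mu_{\min}/\mu_{\max}$. For the mixed term the weight $|w_\ell|_{H^1(\wh\Gamma)}$ does not appear on the right-hand side, so I would first use the interpolation estimate $|w_\ell|_{H^1(\wh\Gamma)}^2\le\|w_\ell\|_{L^2(\wh\Gamma)}\,|w_\ell|_{H^2(\wh\Gamma)}$, which is valid because $w_\ell\in\wh W\subset H^2_0(\wh\Gamma)$ (integrate by parts; the boundary terms vanish).

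The final step is a weighted Schur test. After rescaling the unknowns so that the target seminorm becomes a plain $\ell^2$-sum, each of the three double sums reduces to $\sum_{k,\ell}2^{-|\ell-k|/2}c_kc_\ell$ for suitable nonnegative sequences $(c_\ell)$: for the $\partial_{\wh x\wh x}$ term one takes $c_\ell=\mu_\ell^{(3-2\alpha)/2}\|w_\ell\|_{L^2(\wh\Gamma)}$, for the $\partial_{\wh y\wh y}$ term $c_\ell=\mu_\ell^{(-1-2\alpha)/2}|w_\ell|_{H^2(\wh\Gamma)}$, and for the mixed term the geometric mean of these two (using the interpolation bound above). One checks in each case that the $\mu$-exponents collapse to $\mu_k^{1/2}\mu_\ell^{-1/2}=2^{-(\ell-k)/2}$ for $k\le\ell$. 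Since the kernel $2^{-|m|/2}$ is summable over $m\in\mathbb Z$, Young's inequality for sequences yields $\sum_{k,\ell}2^{-|\ell-k|/2}c_kc_\ell\lesssim\sum_\ell c_\ell^2$, and summing the three contributions gives exactly the claimed bound $\sum_\ell\mu_\ell^{-1-2\alpha}\big(\mu_\ell^4\|w_\ell\|_{L^2(\wh\Gamma)}^2+|w_\ell|_{H^2(\wh\Gamma)}^2\big)$.

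I expect the main obstacle to be that the extensions at different levels have nested rather than finitely overlapping supports, so a naive triangle inequality fails and the cross terms $k\ne\ell$ must be controlled directly; the resolution is precisely the asymmetric $L^\infty$--$L^1$ Hölder split, which exploits the containment of supports to turn the level interaction into a geometrically decaying kernel that the Schur/Young argument can absorb. Verifying that the weight exponents line up so that the decay exponent is exactly $\tfrac12$ in $2^{-|\ell-k|/2}$, together with checking the interpolation step for the mixed term, is the bookkeeping that needs care.
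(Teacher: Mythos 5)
Your proposal is correct and follows essentially the same route as the paper's proof: expand the tensor-product Hessian into the three double sums, bound the $\wh x$-integrals by the asymmetric $L^\infty$--$L^1$ H\"older split with the $L^1$-norm on the finer function (Lemma~\ref{lem:l1 linf estim}), which yields exactly the kernel $2^{-|\ell-m|/2}\mu_\ell^{j-\alpha-1/2}\mu_m^{j-\alpha-1/2}$ used in the paper, absorb it by Cauchy--Schwarz/Young and the geometric series (your Schur test), and eliminate the $H^1$-weight of the mixed term via $|w_\ell|_{H^1}^2\le\|w_\ell\|_{L^2}|w_\ell|_{H^2}$ plus Young's inequality. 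The exponent bookkeeping you describe matches the paper's computation, so no gap remains.
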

\begin{proof}
	By definition, we have
	\begin{equation}\label{eq:lem:42:1}
	\begin{aligned}
		\left|\sum_{\ell=1}^{L}\wh  E_{\alpha,\ell} w_\ell \right|_{H^2(\wh\Omega)}^2
		&  =
		\sum_{j=0}^2
		\sum_{\ell=1}^{L}
		\sum_{m=1}^{L}
		( \partial^j \psi_{\alpha,\ell},\partial^j \psi_{\alpha,m} )_{L^2(0,1)}
		( \partial_{\wh y}^{2-j} w_\ell,  \partial_{\wh y}^{2-j} w_m )_{L^2(\wh \Gamma)}.
	\end{aligned}
	\end{equation}
	By estimating the the first scalar product using the $L^\infty$ and the $L^1$ norm and those norms using Lemma~\ref{lem:l1 linf estim}, using the symmetry in $m$ and $\ell$ and~\eqref{eq:xi eta def}, we obtain
	\begin{align*}
		& ( \partial^j \psi_{\alpha,\ell},\partial^j \psi_{\alpha,m} )_{L^2(0,1)}
		\lesssim 2^{-|\ell-m|/2} \, \mu_\ell^{j-\alpha-1/2}\, \mu_m^{j-\alpha-1/2}.
	\end{align*}
	By inserting this into~\eqref{eq:lem:42:1}, using Cauchy-Schwarz and Young's inequalities
	and using symmetry in $\ell$ and $m$, we obtain
	\begin{align*}
		& \left|\sum_{\ell=1}^{L}\wh E_{\alpha,\ell} w_\ell \right|_{H^2(\wh\Omega)}^2\\
		&\quad  \lesssim
		\sum_{j=0}^2
		\sum_{\ell=1}^{L}
		\sum_{m=1}^{L}
		2^{-|\ell-m|/2}  \, (\mu_\ell^{2j-1-2\alpha}|w_\ell |_{H^{2-j}(\wh \Gamma)}^2+\mu_m^{2j-1-2\alpha}|w_m |_{H^{2-j}(\wh \Gamma)}^2) \\
		&\quad  \lesssim
		\sum_{j=0}^2
		\sum_{\ell=1}^{L}
		\sum_{m=1}^{L}
		2^{-|\ell-m|/2}
		 \, \mu_\ell^{2j-1-2\alpha}|w_\ell |_{H^{2-j}(\wh \Gamma)}^2.
	\end{align*}
	The summation formula for the geometric series further yields
	\begin{equation}\label{eq:lem:42:2}
		 \left|\sum_{\ell=1}^{L} \wh E_{\alpha,\ell} w_\ell \right|_{H^2(\wh\Omega)}^2
		\lesssim
		\sum_{j=0}^2
		\sum_{\ell=1}^{L}
		 \mu_\ell^{2j-1-2\alpha}|w_\ell |_{H^{2-j}(\wh \Gamma)} ^2.
	\end{equation}
	Integration by parts and Young's inequality yield
	$|w_\ell |_{H^1(\wh \Gamma)} ^2
	\le \| w_\ell \|_{L^2(\wh \Gamma)}  | w_\ell |_{H^2(\wh \Gamma)}
	\le \mu_\ell^2 \| w_\ell \|_{L^2(\wh \Gamma)} + \mu_\ell^{-2}  | w_\ell |_{H^2(\wh \Gamma)}^2 $.
	Substituting this into~\eqref{eq:lem:42:2} yields the desired result.
\end{proof}

Next, we define overall extension operators $\wh E_\alpha$ by explicitly prescribing a decomposition $w=w_1+\dots w_L$. The idea  is that the two summands in Lemma~\ref{lem:42} are supposed to be almost equal, so
$\mu_\ell^2 \|w_\ell\|_{L^2(\wh \Gamma)}\approx |w_\ell|_{H^{2}(\wh \Gamma)}$ is desired.
In order to precisely define the overall extension operator,
we first decompose $\wh W$ via an orthonormal eigendecomposition:
We consider the decomposition of $\wh W$ into functions
$\varphi_{1},\dots,\varphi_{N}\in \wh W$ (where $N=\mathrm{dim}\, \wh W$) with
\[
(\varphi_{i},\varphi_{j})_{L^2(\wh\Gamma)} = \delta_{i,j}, \quad
(\varphi_{i}'',\varphi_{j}'')_{L^2(\wh\Gamma)} = \lambda_{i}^2 \delta_{i,j}
\quad\mbox{and}\quad
0<\lambda_{1} \le \dots \le \lambda_{N}.
\]
Next, we collect the eigenvalues into buckets as follows:
\begin{equation}\label{eq:buckets}
\begin{aligned}
	\Lambda_1 & := \{ i : \lambda_i < \sqrt{\mu_1\mu_2} \},
		\\
	\Lambda_\ell & := \{ i : \sqrt{\mu_{\ell-1}\mu_{\ell}} \le \lambda_i < \sqrt{\mu_{\ell}\mu_{\ell+1}}  \}
		\qquad \mbox{for}\qquad \ell\in\{2,\ldots, L-1\}, \\
	\Lambda_{L} & := \{ i : \sqrt{\mu_{L-1}\mu_{L}}  \le \lambda_i  \}
\end{aligned}
\end{equation}
and define
\begin{equation}\label{eq:buckets2}
	\wh W_{\ell}
	:= \mathrm{span}\, \{ \varphi_{i} : i \in \Lambda_\ell \}.
\end{equation}
For each $w\in \wh W$, a decomposition of $w$ is defined via
\begin{equation}\label{eq:buckets3}
	w=\sum_{\ell=1}^{L} w_{\ell}
	\quad\mbox{with}\quad
	w_{\ell}:=\sum_{i \in \Lambda_\ell}
		(w,\varphi_{i})_{L^2(\wh\Gamma)}\varphi_{i}\in \wh W_\ell \quad\mbox{for}\quad \ell\in\{1,\ldots,L\}.
\end{equation}
Using this choice, we define the projector $\wh E_\alpha: \wh W \to \wh V_0$ for $\alpha\in\{0,1\}$ via
\begin{equation}\label{eq:Ealpha def}
		\wh E_\alpha w
		:=
		\sum_{\ell=1}^{L}
		\wh E_{\alpha, \ell}w_{\ell}
		=
		\sum_{\ell=1}^{L}
		\wh E_{\alpha, \ell}
		\left(
		\sum_{i \in \Lambda_\ell}
		(w,\varphi_{i})_{L^2(\wh\Gamma)}\varphi_{i}
		\right)
		.
\end{equation}
\begin{lemma}\label{lem:E is extension}
	For all $w\in\wh W$, we have
	\begin{align*}
		\wh E_{0} &: \wh W \to \wh V_0 \quad \mbox{with}\quad
		(\wh E_{0} w)|_{\wh\Gamma} = w, \quad
		\partial_n (\wh E_{0} w)|_{\wh\Gamma} = 0 , \\
		\wh E_{1} &: \wh W \to \wh V_0 \quad \mbox{with} \quad
		(\wh E_{1} w)|_{\wh\Gamma} = 0, \quad
		\partial_n (\wh E_{1} w)|_{\wh\Gamma} = w .
	\end{align*}
\end{lemma}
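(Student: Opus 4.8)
The plan is to reduce the entire statement to Lemma~\ref{lem:41} by exploiting the linearity of the construction~\eqref{eq:Ealpha def}. By definition, $\wh E_\alpha w = \sum_{\ell=1}^{L} \wh E_{\alpha,\ell} w_\ell$, where each $w_\ell \in \wh W_\ell \subset \wh W$ is the bucket component from~\eqref{eq:buckets3}. All three assertions --- membership in $\wh V_0$, the trace on $\wh\Gamma$, and the normal derivative on $\wh\Gamma$ --- should therefore follow by summing the corresponding termwise property supplied by Lemma~\ref{lem:41}.

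First I would argue membership in $\wh V_0$. Since each $w_\ell$ lies in $\wh W$, Lemma~\ref{lem:41} gives $\wh E_{\alpha,\ell} w_\ell \in \wh V_0$ for every $\ell$. As $\wh V_0$ is a linear subspace, the finite sum $\wh E_\alpha w = \sum_{\ell} \wh E_{\alpha,\ell} w_\ell$ lies in $\wh V_0$ as well, so $\wh E_\alpha$ maps $\wh W$ into $\wh V_0$.

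Next I would verify the boundary data on $\wh\Gamma$. For $\alpha=0$, Lemma~\ref{lem:41} yields $(\wh E_{0,\ell} w_\ell)|_{\wh\Gamma} = w_\ell$ and $\partial_n(\wh E_{0,\ell} w_\ell)|_{\wh\Gamma} = 0$ for each $\ell$; summing gives $(\wh E_0 w)|_{\wh\Gamma} = \sum_\ell w_\ell$ and $\partial_n(\wh E_0 w)|_{\wh\Gamma} = 0$. The case $\alpha = 1$ is entirely symmetric, producing $(\wh E_1 w)|_{\wh\Gamma} = 0$ and $\partial_n(\wh E_1 w)|_{\wh\Gamma} = \sum_\ell w_\ell$. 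It then remains only to identify $\sum_\ell w_\ell$ with $w$.

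The sole point carrying actual content is the decomposition identity $w = \sum_{\ell=1}^{L} w_\ell$. This holds because the buckets $\Lambda_1,\dots,\Lambda_L$ from~\eqref{eq:buckets} partition the full index set $\{1,\dots,N\}$: the thresholds $\sqrt{\mu_{\ell-1}\mu_\ell}$ are strictly increasing (immediate from $\mu_\ell = p\,2^\ell$), so the half-open intervals they delimit cover $[0,\infty)$ disjointly and every eigenvalue $\lambda_i$ falls into exactly one bucket. Combined with the $L^2(\wh\Gamma)$-orthonormal expansion $w = \sum_{i=1}^{N} (w,\varphi_i)_{L^2(\wh\Gamma)} \varphi_i$, regrouping by buckets reproduces exactly $\sum_\ell w_\ell$. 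I do not expect a genuine obstacle here: the lemma is essentially a bookkeeping consequence of Lemma~\ref{lem:41} together with the fact that the bucketing~\eqref{eq:buckets} is a bona fide partition of the spectral indices.
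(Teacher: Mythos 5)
Your proposal is correct and follows exactly the paper's (very terse) argument: the paper's proof simply cites the definition~\eqref{eq:Ealpha def}, Lemma~\ref{lem:41}, and the decomposition~\eqref{eq:buckets3}, which is precisely the termwise-plus-partition reasoning you spell out. No issues.
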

\begin{proof}
	This result follows from~\eqref{eq:Ealpha def},
	Lemma~\ref{lem:41},
	and~\eqref{eq:buckets3}.
\end{proof}
By using the bounds that we have already computed, we obtain the following bound for the norm of $\wh E_\alpha$.

\begin{lemma}\label{lem:E bound}
	For $\alpha\in\{0,1\}$ and all $w\in \wh W$, we have
	\[
		 |\wh E_\alpha w|_{H^2(\wh\Omega)}^2
		\lesssim
		\sum_{i=1}^{N}
		\min(\lambda_i,p\wh h^{-1})^{3-2\alpha}
			(w,\varphi_{i})_{L^2}^2
		+p^{3-2\alpha} \|w\|_{L^2}^2
		+ \left(p\,\wh h^{-1}\right)^{-1-2\alpha} |w|_{H^{2}}^2.
	\]
\end{lemma}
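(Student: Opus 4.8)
The plan is to start from the bound already established in Lemma~\ref{lem:42} for the decomposition $w=\sum_{\ell=1}^{L} w_\ell$ of~\eqref{eq:buckets3}, and then to convert the bucketed sum into the global quantities $\|w\|_{L^2}^2$ and $|w|_{H^2}^2$ using the orthonormal eigenbasis. Since $\{\varphi_i\}$ is orthonormal in $L^2(\wh\Gamma)$ and $(\varphi_i'',\varphi_j'')_{L^2}=\lambda_i^2\delta_{i,j}$, the definition of $w_\ell$ gives $\|w_\ell\|_{L^2}^2=\sum_{i\in\Lambda_\ell}(w,\varphi_i)_{L^2}^2$ and $|w_\ell|_{H^2}^2=\sum_{i\in\Lambda_\ell}\lambda_i^2(w,\varphi_i)_{L^2}^2$. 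Substituting these into Lemma~\ref{lem:42} and expanding $\mu_\ell^{-1-2\alpha}(\mu_\ell^4+\lambda_i^2)$, I obtain
\[
	|\wh E_\alpha w|_{H^2(\wh\Omega)}^2
	\lesssim
	\sum_{\ell=1}^{L}\sum_{i\in\Lambda_\ell}
	\bigl(\mu_\ell^{3-2\alpha}+\mu_\ell^{-1-2\alpha}\lambda_i^2\bigr)(w,\varphi_i)_{L^2}^2.
\]

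The remaining work is a bucket-by-bucket comparison of $\mu_\ell$ with $\lambda_i$, for which I would first record the elementary facts $\mu_\ell=p2^\ell$, $\mu_1=2p\eqsim p$, and—from $L=\lfloor\log_2(\wh h^{-1})\rfloor-1$—that $\mu_L\eqsim p\wh h^{-1}$. The bucket definition~\eqref{eq:buckets} yields $\lambda_i\in[\mu_\ell/\sqrt2,\mu_\ell\sqrt2)$, hence $\lambda_i\eqsim\mu_\ell$, whenever $2\le\ell\le L-1$; moreover $\lambda_i\lesssim\mu_1\eqsim p$ for $\ell=1$, and $\lambda_i\gtrsim\mu_L\eqsim p\wh h^{-1}$ for $\ell=L$. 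In particular the cut-off $\min(\lambda_i,p\wh h^{-1})$ is inactive for $\ell<L$ (where $\lambda_i\lesssim\mu_{L-1}\lesssim p\wh h^{-1}$) and active for $\ell=L$ (where $\min(\lambda_i,p\wh h^{-1})\eqsim p\wh h^{-1}\eqsim\mu_L$).

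With these regimes I split the double sum into three contributions. For the first bucket, $\lambda_i\lesssim p$ gives $\mu_1^{3-2\alpha}\eqsim p^{3-2\alpha}$ and $\mu_1^{-1-2\alpha}\lambda_i^2\lesssim p^{3-2\alpha}$, so summing over $\Lambda_1$ and using $\sum_{i\in\Lambda_1}(w,\varphi_i)_{L^2}^2\le\|w\|_{L^2}^2$ produces the term $p^{3-2\alpha}\|w\|_{L^2}^2$. For the middle buckets I use $\lambda_i\eqsim\mu_\ell$ together with $\lambda_i\gtrsim1$ to bound both $\mu_\ell^{3-2\alpha}$ and $\mu_\ell^{-1-2\alpha}\lambda_i^2\eqsim\lambda_i^{1-2\alpha}$ by $\lambda_i^{3-2\alpha}=\min(\lambda_i,p\wh h^{-1})^{3-2\alpha}$, feeding the first sum. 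For the last bucket, the $\mu_L^{3-2\alpha}$ part is $\eqsim(p\wh h^{-1})^{3-2\alpha}\eqsim\min(\lambda_i,p\wh h^{-1})^{3-2\alpha}$ and thus also goes into the first sum, while the $\mu_L^{-1-2\alpha}\lambda_i^2\eqsim(p\wh h^{-1})^{-1-2\alpha}\lambda_i^2$ part, summed over $\Lambda_L$ and bounded by $\sum_{i=1}^{N}\lambda_i^2(w,\varphi_i)_{L^2}^2=|w|_{H^2}^2$, produces exactly the last term $(p\wh h^{-1})^{-1-2\alpha}|w|_{H^2}^2$. Collecting the three contributions and using the global Parseval identities $\sum_i(w,\varphi_i)_{L^2}^2=\|w\|_{L^2}^2$ and $\sum_i\lambda_i^2(w,\varphi_i)_{L^2}^2=|w|_{H^2}^2$ yields the claim.

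I expect the only genuine subtlety—as opposed to routine power counting—to be the bookkeeping around the boundary buckets: confirming that the small-eigenvalue regime forces the separate term $p^{3-2\alpha}\|w\|_{L^2}^2$ (one cannot absorb $\Lambda_1$ into the first sum, since $\lambda_i$ may be $\ll p$ there), and that the $|w|_{H^2}^2$ term must arise solely from $\Lambda_L$ (for $\ell<L$ one has $\mu_\ell^{-1-2\alpha}\gg(p\wh h^{-1})^{-1-2\alpha}$, so the wrong-direction inequality forbids routing those $\lambda_i^2$ terms through $|w|_{H^2}^2$, and they must instead be charged to the capped first sum). I would also flag the degenerate cases in which the middle range $\{2,\dots,L-1\}$ is empty, which are harmless since the omitted buckets simply do not contribute.
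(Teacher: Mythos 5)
Your argument is correct and follows essentially the same route as the paper's proof: insert the bucket decomposition~\eqref{eq:buckets3} into Lemma~\ref{lem:42}, expand $\|w_\ell\|_{L^2}^2$ and $|w_\ell|_{H^2}^2$ in the eigenbasis via Parseval, and treat the first, middle and last buckets separately using $\lambda_i\eqsim\mu_\ell$, $\mu_1\eqsim p$ and $\mu_L\eqsim p\,\wh h^{-1}$. The only discrepancy is that you take the displayed normalization $(\varphi_i'',\varphi_j'')_{L^2}=\lambda_i^2\delta_{i,j}$ literally and hence carry $\lambda_i^2$ where the paper's proof carries $\lambda_i^4$ (it implicitly uses $|v|_{H^2}^2=\sum_i\lambda_i^4 v_i^2$, as in the proof of Lemma~\ref{lem:47}); this inconsistency originates in the paper itself, and your power counting closes the argument with either convention.
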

\begin{proof}
	Let $w\in \wh W$ arbitrary but fixed and $w_i:=(w,\varphi_{i})_{L^2(\wh\Gamma)}$.
	Using \eqref{eq:Ealpha def} and Lemma~\ref{lem:42}, we obtain
	\begin{equation}\label{eq:lem:E bound:1}
		|\wh E_\alpha w|_{H^2(\wh\Omega)}^2
		\lesssim
		\sum_{\ell=1}^{L}
		\sum_{i\in \Lambda_\ell}
		(
		\mu_\ell^{3-2\alpha}
		+
		\mu_\ell^{-1-2\alpha}
		\lambda_i^4
		)
		w_i^2.
	\end{equation}
	To estimate this, consider three cases.
	(i)
	For $\ell\in\{2,\ldots,L-1\}$ and $i\in\Lambda_\ell$, we have $\lambda_i \eqsim \mu_\ell \le \mu_{L} \eqsim p\wh h^{-1}$ due to~\eqref{eq:buckets}, \eqref{eq:xi eta def} and \eqref{eq:big L def}. This shows
	$\mu_\ell^{3-2\alpha}+	\mu_\ell^{-1-2\alpha}\lambda_i^4 \eqsim \lambda_i ^{3-2\alpha} \lesssim \min(\lambda_i ,p\wh h^{-1})^{3-2\alpha}$.
	(ii)
	For $\ell=1$ and $i\in\Lambda_\ell$, we have using~\eqref{eq:buckets} and \eqref{eq:xi eta def} that
	$\lambda_i \lesssim \mu_1 \eqsim p$ and thus
	$\mu_1^{3-2\alpha}
	+
	\mu_1^{-1-2\alpha}
	\lambda_i^4
	\lesssim
	\mu_1^{3-2\alpha}
	\eqsim
	p^{3-2\alpha}
	$.
	(iii)
	For $\ell=L$ and $i\in\Lambda_\ell$, we have using~\eqref{eq:buckets}, \eqref{eq:xi eta def} and \eqref{eq:big L def} that  $\lambda_i \gtrsim \mu_{L} \eqsim p\wh h^{-1}$ and thus
	$\mu_{L}^{3-2\alpha}
	+
	\mu_{L}^{-1-2\alpha}
	\lambda_i^4
	\lesssim
	(p\wh h^{-1})^{-1-2\alpha}
	\lambda_i^4
	$.
	By substituting these estimates into~\eqref{eq:lem:E bound:1}, we obtain
	\begin{equation}\nonumber
		|\wh E_\alpha w|_{H^2(\wh\Omega)}^2
		\lesssim
		\sum_{i=1}^{N}
		\big(
		\min(\lambda_i, p\wh h^{-1})^{3-2\alpha}
		+
		p^{3-2\alpha}
		+
		(p\wh h^{-1})^{-1-2\alpha}
		\lambda_i^4
		\big)
		w_i^2.
	\end{equation}
	The desired result follows immediately using Parseval's identity.
\end{proof}

Next, we give estimates of the norm of the extension operators using norms that are defined with Hilbert space interpolation techniques, specifically by norms defined via the K-method, see, e.g., Chapter~15 in Ref.~\refcite{Lions:1972}. Helpful information on Hilbert space interpolation can also be found in Ref.~\refcite{AdamsFournier:2003}.
Given two Hilbert spaces $X\subset Y$ where $X$ is dense in $Y$, we define for each $\theta \in (0,1)$ a new space $[X,Y]_{\theta}\subset Y$ via
\begin{equation}\label{eq:Kmethod1}
		[X,Y]_{\theta}
		:=
		\{
			y\in Y
			:
			\|y\|_{[X,Y]_{\theta}} < + \infty
		\},
\end{equation}
where
\begin{equation}\label{eq:Kmethod2}
		\|y\|_{[X,Y]_{\theta}}^2
		:=
		\|y\|_{Y}^2
		+
		\int_0^\infty
		\inf_{x\in X}
		\big(
		t^{-2\theta-1}
		\|x\|_X^2
		+
		t^{-2\theta+1}
		\|y-x\|_Y^2
		\big)
		\mathrm dt.
\end{equation}

The interpolation spaces can also be characterized using Fourier series. We show that the interpolation norms can also be represented based on the discrete eigendecomposition that we have introduced above.

\begin{lemma}\label{lem:47}
	For $\alpha\in\{0,1\}$ and all $w\in \wh W$, we have
	\[
		\sum_{i=1}^{N}
		\min(\lambda_i,\wh h^{-1})^{3-2\alpha}
			(w,\varphi_{i})_{L^2(\wh\Gamma)}^2
		\lesssim
		\|w\|^2_{[H^2_0(\wh\Gamma),L^2(\wh\Gamma)]_{(1+2\alpha)/4}}.
	\]
\end{lemma}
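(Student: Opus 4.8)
\emph{Strategy.} I would recast the claim as the boundedness of one operator at an interpolated level and prove it by matching endpoints. Let
\[
  S w := \big((w,\varphi_i)_{L^2(\wh\Gamma)}\big)_{i=1}^{N}, \qquad
  \|c\|_{\ell^2_s}^2 := \sum_{i=1}^{N}\min(\lambda_i,\wh h^{-1})^{2s}\,c_i^2 ,
\]
so that the left-hand side of the lemma is exactly $\|Sw\|^2_{\ell^2_{(3-2\alpha)/2}}$ (since $2\cdot\tfrac{3-2\alpha}{2}=3-2\alpha$). The point is that both $[H^2_0(\wh\Gamma),L^2(\wh\Gamma)]_\theta$ and the power-weighted sequence spaces $\ell^2_s$ are exact interpolation scales, and for $\theta=(1+2\alpha)/4$ one has $2(1-\theta)=\tfrac{3-2\alpha}{2}$. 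Hence, interpolating $S$ between the two endpoints $S:L^2(\wh\Gamma)\to\ell^2_0$ and $S:H^2_0(\wh\Gamma)\to\ell^2_2$ yields $S:[H^2_0(\wh\Gamma),L^2(\wh\Gamma)]_\theta\to\ell^2_{(3-2\alpha)/2}$, which is precisely the asserted estimate. The underlying mechanism is the K-method~\eqref{eq:Kmethod2}: after factoring $t^{-2\theta-1}$ one is lower-bounding $\int_0^\infty t^{-2\theta-1}K(t,w)^2\,\mathrm dt$, and the elementary scaling $\int_0^\infty t^{-2\theta-1}\tfrac{a\,t^2}{a+t^2}\,\mathrm dt\eqsim a^{1-\theta}$ (substitute $t=a^{1/2}s$) applied with $a=\min(\lambda_i,\wh h^{-1})^4$ and $1-\theta=\tfrac{3-2\alpha}{4}$ reproduces the weight $\min(\lambda_i,\wh h^{-1})^{3-2\alpha}$ one mode at a time; the operator-interpolation formulation is just the clean way to package this.

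\emph{The two endpoints.} The $L^2$-endpoint is immediate: $\|Sw\|_{\ell^2_0}^2=\sum_i(w,\varphi_i)_{L^2(\wh\Gamma)}^2\le\|w\|_{L^2(\wh\Gamma)}^2$ by Bessel's inequality, using that the $\varphi_i$ are $L^2(\wh\Gamma)$-orthonormal. The $H^2_0$-endpoint is the substance of the proof: I must show
\[
  \sum_{i=1}^{N}\min(\lambda_i,\wh h^{-1})^4\,(w,\varphi_i)_{L^2(\wh\Gamma)}^2
  \;\lesssim\;
  \|w\|_{H^2(\wh\Gamma)}^2
  \qquad\text{for all } w\in H^2_0(\wh\Gamma).
\]
Here $(w,\varphi_i)_{L^2(\wh\Gamma)}=(Q_hw,\varphi_i)_{L^2(\wh\Gamma)}$, where $Q_h$ is the $L^2(\wh\Gamma)$-projection onto $\wh W$, so the sum only sees the discrete part $Q_hw=\sum_i(w,\varphi_i)_{L^2}\varphi_i$. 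The plan is to bound the truncated weight by the $H^2(\wh\Gamma)$-seminorm eigenvalue of $\varphi_i$, i.e. $\min(\lambda_i,\wh h^{-1})^4\lesssim|\varphi_i|_{H^2(\wh\Gamma)}^2$, which identifies the left-hand sum with $|Q_hw|_{H^2(\wh\Gamma)}^2$ up to constants, and then to use $H^2$-stability of $Q_h$ on the quasi-uniform spline space to conclude $|Q_hw|_{H^2(\wh\Gamma)}^2\lesssim|w|_{H^2(\wh\Gamma)}^2$. The cap at $\wh h^{-1}$ is exactly what guarantees $\min(\lambda_i,\wh h^{-1})^4\le|\varphi_i|_{H^2(\wh\Gamma)}^2$ robustly in the unresolved regime, and it is justified through the inverse estimate of Lemma~\ref{lem:schwabinverse}, which quantifies that the trace space $\wh W$ cannot separate frequencies beyond $\wh h^{-1}$.

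\emph{Main obstacle.} The delicate step is the $H^2_0$-endpoint, and within it the two facts (i) that the mesh-truncated weight is dominated by $|\varphi_i|_{H^2(\wh\Gamma)}^2$ and (ii) that $Q_h$ (or a suitable $H^2$-stable quasi-interpolant replacing it) is $H^2(\wh\Gamma)$-stable. Fact (i) is where the quasi-uniformity of the grids and the inverse estimate genuinely enter, saturating the weight at the resolution scale; without the $\min(\cdot,\wh h^{-1})$ the endpoint, and hence the lemma, would fail for the near-grid modes. Fact (ii) is what prevents the continuous infimum in $K(t,w)$ from being defeated by sub-grid oscillations: a continuous competitor that is $L^2$-close to an oscillatory discrete mode must pay a comparable $H^2$-seminorm, so the continuous interpolation norm really does control the discrete spectral sum. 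Once both endpoints are in hand, the reiteration/interpolation step is formal, and composing it with Parseval on $\wh W$ gives the stated bound.
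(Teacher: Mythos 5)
Your architecture --- two endpoint bounds for the synthesis operator $S$ plus exact interpolation of the power-weighted $\ell^2$ scale --- is a legitimate repackaging of what the paper does by hand inside the K-functional integral: the paper restricts $\int_0^\infty$ to $t\le\sigma\eqsim\wh h^{-2}$, replaces the continuous competitor $v$ by $\wh\Pi v\in\wh W$ using the stability and $L^2$-approximation properties of the projector from Lemmas~\ref{lem:approxerror} and~\ref{lem:approxerror2}, and then evaluates the discrete infimum mode by mode, which produces exactly your weight $\min(\sigma,\lambda_i^2)^{2-2\theta}=\min(\lambda_i,\wh h^{-1})^{3-2\alpha}$. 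Your $L^2$ endpoint (Bessel), the identification $[\ell^2_2,\ell^2_0]_\theta=\ell^2_{2(1-\theta)}$, and in particular the observation that the $H^2_0$ endpoint must be proved for \emph{all} of $H^2_0(\wh\Gamma)$ rather than only on $\wh W$ --- because the K-functional infimum ranges over continuous competitors --- are all correct and capture the real content of the lemma.

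The one genuine problem is \emph{how} you prove the $H^2_0$ endpoint. You discard the cap immediately via $\min(\lambda_i,\wh h^{-1})^4\le\lambda_i^4$, reduce to $|Q_hw|_{H^2(\wh\Gamma)}^2\lesssim\|w\|_{H^2(\wh\Gamma)}^2$, and invoke ``$H^2$-stability of $Q_h$'' as a known fact. Under the paper's Notation the hidden constant must be independent of $p$, and the only route to this stability within the paper's toolbox --- $|Q_hw-\wh\Pi w|_{H^2}\lesssim(p^2\wh h^{-1})^2\|Q_hw-\wh\Pi w\|_{L^2}$ via Lemma~\ref{lem:schwabinverse}, then the $L^2$ error bound --- costs a factor $p^4$, which would propagate into Theorem~\ref{thrm:main}; a $p$-robust $H^2$-stability of the $L^2$-projection is not something you can simply cite here. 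The fix is to keep the cap where it matters: write $Q_hw=\wh\Pi w+e$ with $e:=Q_hw-\wh\Pi w\in\wh W$, bound $\sum_i\min(\lambda_i,\wh h^{-1})^4(\wh\Pi w,\varphi_i)^2\le|\wh\Pi w|_{H^2(\wh\Gamma)}^2\lesssim|w|_{H^2(\wh\Gamma)}^2$ by Parseval and the stability in Lemmas~\ref{lem:approxerror} and~\ref{lem:approxerror2}, and bound the remainder by $\sum_i\min(\lambda_i,\wh h^{-1})^4(e,\varphi_i)^2\le\wh h^{-4}\|e\|_{L^2(\wh\Gamma)}^2\le 4\wh h^{-4}\|w-\wh\Pi w\|_{L^2(\wh\Gamma)}^2\lesssim|w|_{H^2(\wh\Gamma)}^2$, using that $Q_h$ is the $L^2$-best approximation in $\wh W$ together with the optimal-order error estimate $\|w-\wh\Pi w\|_{L^2}\lesssim\wh h^2|w|_{H^2}$ that the Aubin--Nitsche argument actually delivers (and that the paper's own choice $\sigma\eqsim\wh h^{-2}$ already presupposes). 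This version needs no inverse estimate, is $p$-robust, and shows that the cap enters precisely on the sub-grid remainder $e$ --- which is a sharper statement of the role you attribute to it. With that repair your proof is complete and equivalent in substance to the paper's.
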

\begin{proof}
	Let $\alpha\in\{0,1\}$ and $w\in \wh W$ be arbitrary but fixed and $\theta := (1+2\alpha)/4$.
	Because of Lemmas~\ref{lem:approxerror} and \ref{lem:approxerror2}, we know that there is a projector $\wh \Pi:H^2_0(\wh\Gamma) \to \wh W$ and that there is a constant $1\le C_A\lesssim  1$ such that
	\[
			 \|v-\wh \Pi v\|_{L^2(\wh\Gamma)}^2
			\le C_A^2 \wh h^2 |v-\wh \Pi v|_{H^2(\wh\Gamma)}^2,
			\qquad
			|\wh \Pi v|_{H^2(\wh\Gamma)}^2
			+
			|v-\wh \Pi v|_{H^2(\wh\Gamma)}^2
			\le C_A^2 |v|_{H^2(\wh\Gamma)}^2
	\]
	for all $v\in H^2_0(\wh\Gamma)$.
	For  $t \le \sigma:=C_A^{-1} \wh h^{-2}$, we have using these estimates and
	the triangle inequality
	\begin{align*}
		\|w\|^2_{[H^2_0(\wh\Gamma),L^2(\wh\Gamma)]_{\theta}}
		& \ge \int_0^{\sigma}
		\inf_{v \in H^2_0(\wh\Gamma)}
		\big(
		 t^{-2\theta-1} |v|_{H^2(\wh\Gamma)}^2
		+ t^{-2\theta+1} \|w-v\|_{L^2(\wh\Gamma)}^2
		\\&\qquad\qquad
		- C_A^{-2} t^{-2\theta-1}
		( |v-\wh\Pi v|_{H^2(\wh\Gamma)}^2
			- t^{2} \|v-\wh\Pi v\|_{L^2(0,1)}^2  )
		\big)
		\,\mathrm dt
		\\&
		\gtrsim C_A^{-2} \int_0^{\sigma}
		\inf_{v \in H^2_0(\wh\Gamma)}
		\big(
		 t^{-2\theta-1} |\wh\Pi v|_{H^2(\wh\Gamma)}^2
		+ t^{-2\theta+1} \|w-\wh\Pi v\|_{L^2(\wh\Gamma)}^2
		\big)
		\,\mathrm dt.
	\end{align*}
	Since $\wh\Pi$ maps into $\wh W$, we further have
	\begin{align*}
		\|w\|^2_{[H^2_0(\wh\Gamma),L^2(\wh\Gamma)]_{\theta}}
		\gtrsim \int_0^{\sigma}
		\inf_{v \in \wh W}
		\big(
		t^{-2\theta-1} |v|_{H^2(\wh\Gamma)}^2
		+ t^{-2\theta+1} \|w-v\|_{L^2(\wh\Gamma)}^2
		\big)
		\,\mathrm dt.
	\end{align*}
	Analogous to $w= \sum_{i=1}^{N} w_i\varphi_{i}$, also each
	$v \in \wh W$ can be uniquely written as
	$v= \sum_{i=1}^{N} v_i \varphi_{i}$.
	Parseval's identity yields
	$|v|_{H^2(\wh\Gamma)}^2 = \sum_{i=1}^{N}  \lambda_i^4 v_i^2$
	and
	$\|w-v\|_{L^2(\wh\Gamma)}^2 = \sum_{i=1}^{N} (w_i-v_i)^2$.
	Using this, we explicitly compute the infimum and have
	\begin{align*}
		\|w\|^2_{[H^2_0(\wh\Gamma),L^2(\wh\Gamma)]_{\theta}}
		&
		\gtrsim
		\int_0^{\sigma}
		\sum_{i=1}^{N}
		\inf_{v_i \in \mathbb R}
		\big(
		t^{-2\theta-1} \lambda_i^4 v_i^2
		+ t^{-2\theta+1} (w_i-v_i)^2
		\big)
		\,\mathrm dt\\
		&
		=
		\sum_{i=1}^N
		\int_0^{\sigma}
		\frac{t^{1-2\theta}  \lambda_i^4}{t^2+ \lambda_i^4}
		\,\mathrm dt \,w_i^2
		\gtrsim
		\sum_{i=1}^{N}
		\min(\sigma,\lambda_i^2)^{2-2\theta}
		w_i^2.
	\end{align*}
	Since $\sigma\eqsim \wh h^{-2}$, this implies the desired result.
\end{proof}

The interpolation space from the last lemma can be represented as a classical Sobolev space with fractional order as defined with the the Sobolev-Slobodeckij formulation. The norms for the relevant spaces are given by
\[
	|w|_{H^{1/2}(\wh\Gamma)}^2 := \int_{\wh\Gamma}\int_{\wh\Gamma}
		\frac{ (w(\wh{\mathbf x}) - w(\wh{\mathbf y}))^2 }{ \| \wh{\mathbf x} - \wh{\mathbf y} \|_{\ell^2}^2}
		\, \mathrm d \wh{\mathbf x}
		\, \mathrm d \wh{\mathbf y},
\]
and
$	|w|_{H^{n+1/2}(\wh\Gamma)}^2 :=|\partial^n w|_{H^{1/2}(\wh\Gamma)}^2$
and
$	\|w\|_{H^{n+1/2}(\wh\Gamma)}^2 :=|w|_{H^{n+1/2}(\wh\Gamma)}^2 + \|w\|_{H^n(\wh\Gamma)}^2$
for $n \in \mathbb N_0 = \{0,1,2,3,\dots\}$.
Moreover, define for $n\in \mathbb N_0$
\[
	\| v \|_{H^{n+1/2}_{00}(\wh \Gamma)}^2
			:=
			\| v \|_{H^{n+1/2}(\wh \Gamma)}^2
			+
			\| \delta^{-1/2} \partial^n v \|_{L^2(\wh \Gamma)}^2
	\quad\mbox{with}\quad
	\delta(\wh x,\wh y) := \wh y(1-\wh y).
\]
The function spaces $H^{n+1/2}(\wh\Gamma)$  and $H_{00}^{n+1/2}(\wh\Gamma)$ contain all functions in $L^2(\wh\Gamma)$ with finite $H^{n+1/2}$ or $H^{n+1/2}_{00}$ norm, respectively.
The following result is standard.

\begin{lemma}\label{lem:h00}
	Let $s_1>s_2\ge 0$, $\theta \in (0,1)$, and $s:=(1-\theta) s_1 + \theta s_2$.
	Then, we have $[H^{s_1}(\wh \Gamma), H^{s_2}(\wh \Gamma)]_\theta = H^s(\wh \Gamma)$ and there is a constant $C_{s_1,s_2,\theta}\ge 1$ such that
	\begin{align*}
			C_{s_1,s_2,\theta}^{-1}
			\| v \|_{H^s(\wh \Gamma)}
			&\le
			\| v \|_{[H^{s_1}(\wh \Gamma), H^{s_2}(\wh \Gamma)]_\theta}
			\le
			C_{s_1,s_2,\theta}
			\| v \|_{H^s(\wh \Gamma)}
			\quad \mbox{for all} \quad v\in H^s(\wh \Gamma).
	\end{align*}
	If also  $s \in \mathbb N_0 + \tfrac12$, we have
	$[H_0^{s_1}(\wh \Gamma), H_0^{s_2}(\wh \Gamma)]_\theta
	=
	H^s_{00}(\wh \Gamma)$, again, there is a constant $C_{s_1,s_2,\theta}\ge 1$ such that
	\begin{align*}
			C_{s_1,s_2,\theta}^{-1}
			\| v \|_{H^s_{00}(\wh \Gamma)}
			&\le
			\| v \|_{[H_0^{s_1}(\wh \Gamma), H_0^{s_2}(\wh \Gamma)]_\theta}
			\le
			C_{s_1,s_2,\theta}
			\| v \|_{H^s_{00}(\wh \Gamma)}
			\quad \mbox{for all} \quad v\in H^s_{00}(\wh \Gamma).
	\end{align*}
\end{lemma}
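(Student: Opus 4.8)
The plan is to prove the two identities separately, reducing each to the classical interpolation theory for the Sobolev scale on the interval $\wh\Gamma$, and to isolate the half-integer phenomenon that forces the weighted term $\|\delta^{-1/2}\partial^n v\|_{L^2(\wh\Gamma)}$ to appear in the second identity.

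For the first identity I would realize $H^{s}(\wh\Gamma)$ as a retract of $H^s(\mathbb R)$. Fix a universal extension operator $E$ (for instance a Stein or higher-order reflection extension) that is bounded $H^{s_i}(\wh\Gamma)\to H^{s_i}(\mathbb R)$ simultaneously for $i=1,2$, and let $R$ denote restriction to $\wh\Gamma$, so that $R\circ E=\mathrm{id}$. Because the $K$-functional defining the norm in~\eqref{eq:Kmethod2} is natural with respect to bounded linear maps, the pair $(E,R)$ exhibits $[H^{s_1}(\wh\Gamma),H^{s_2}(\wh\Gamma)]_\theta$ as a retract of $[H^{s_1}(\mathbb R),H^{s_2}(\mathbb R)]_\theta$. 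On the line the Fourier transform identifies $H^{s}(\mathbb R)$ with the weighted space $L^2((1+|\xi|^2)^{s}\,\mathrm d\xi)$, and the explicit computation of the $K$-functional for weighted $L^2$-spaces gives $[H^{s_1}(\mathbb R),H^{s_2}(\mathbb R)]_\theta=H^{s}(\mathbb R)$ with equivalent norms, where $s=(1-\theta)s_1+\theta s_2$. Transporting this back through $(E,R)$ yields $[H^{s_1}(\wh\Gamma),H^{s_2}(\wh\Gamma)]_\theta=H^{s}(\wh\Gamma)$ and the two-sided norm bound with a constant $C_{s_1,s_2,\theta}$ depending only on the indicated data; this is recorded in Ref.~\refcite{AdamsFournier:2003} and in Chapter~1 of Ref.~\refcite{Lions:1972}.

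For the second identity I would pass to the subspaces with vanishing boundary data and track the behaviour at the endpoints $\wh y\in\{0,1\}$ of $\wh\Gamma$, where $\delta(\wh x,\wh y)=\wh y(1-\wh y)$ degenerates. By the reiteration theorem it suffices to treat one convenient pair of endpoints and recover the general case, so the essential content is the classical Lions--Magenes result: interpolating $H^{s_1}_0(\wh\Gamma)$ and $H^{s_2}_0(\wh\Gamma)$ produces $H^{s}_0(\wh\Gamma)$ whenever $s\notin\mathbb N_0+\tfrac12$, but produces the strictly smaller space $H^{s}_{00}(\wh\Gamma)$ exactly at a half-integer $s=n+\tfrac12$. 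The mechanism is local near each endpoint: after a partition of unity and a change of variables reducing each endpoint to the half-line, the $K$-functional at the critical interpolation parameter (the model case being $[H^1_0,L^2]_{1/2}=H^{1/2}_{00}$) is shown, via a Hardy inequality, to be equivalent to the $L^2$-norm of $\delta^{-1/2}\partial^n v$. This is precisely the extra term in the definition of $\|\cdot\|_{H^{n+1/2}_{00}(\wh\Gamma)}$; it is finite on $H^{s}_{00}(\wh\Gamma)$ but not on all of $H^{s}(\wh\Gamma)$, which is what separates $H^{s}_{00}(\wh\Gamma)$ from both $H^{s}_0(\wh\Gamma)$ and $H^{s}(\wh\Gamma)$. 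The resulting norm equivalence, again with a constant depending only on $s_1,s_2,\theta$, is exactly the Lions--Magenes characterization in Ref.~\refcite{Lions:1972}.

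I expect the genuinely delicate point to be this endpoint analysis: proving that at a half-integer the interpolation $K$-functional reproduces the weight $\delta^{-1/2}$ rather than an ordinary vanishing-trace condition. The Hardy inequality and the sharp matching of the weight near the degenerate endpoints are where all the work sits; the first identity, the retraction--coretraction framework, and the reiteration step are routine once this model computation on the half-line is in place.
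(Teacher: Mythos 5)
Your proposal is correct and rests on the same foundation as the paper: the paper's proof consists of combining Theorems~9.6, 11.7 and~15.1 of Ref.~\refcite{Lions:1972} (identifying the $K$-method spaces with the Lions--Magenes interpolation spaces and invoking their characterization of $[H_0^{s_1},H_0^{s_2}]_\theta$ at half-integer $s$ via the weight $\delta$), plus the observation that the constant can only depend on $s_1$, $s_2$, $\theta$ once the interval $\wh\Gamma$ and $\delta$ are fixed. Your retraction/Fourier argument for the unconstrained scale and the endpoint localization with the Hardy inequality for the half-integer case are exactly the mechanisms underlying those cited theorems, so you are supplying the internals of the same citation rather than taking a different route.
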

\begin{proof}
      The combination of Theorems~9.6 and~15.1 in Ref.~\refcite{Lions:1972} states $[H_0^{s_1}(\wh \Gamma), H_0^{s_2}(\wh \Gamma)]_\theta=H^s(\wh \Gamma)$ with equivalent norms.
      The combination of Theorems~11.7 and~15.1 in Ref.~\refcite{Lions:1972} gives the characterization of $[H_0^{s_1}(\wh \Gamma), H_0^{s_2}(\wh \Gamma)]_\theta$ via  $v\in H^s(\wh \Gamma)$ and $\delta^{-1/2} \partial^{n} v \in L^2(\wh \Gamma)$ for $s=n+1/2$ since the function $\delta$ satisfies the conditions of (11.16) in Ref.~\refcite{Lions:1972}. Again, that theorem states that the norms are equivalent. The constant used for interpolation can depend on all involved quantities, except the function $v$ itself. Since we have fixed the domain and $\delta$, the only remaining independent quantities are $s_1$, $s_2$ and $\theta$.
\end{proof}
Since $H^0_0(\wh \Gamma)=H^0(\wh \Gamma)=L^2(\wh \Gamma)$, we have
$[H_0^2(\wh \Gamma), L^2(\wh \Gamma)]_{(1+2\alpha)/4} = H^{1/2+\alpha}_{00}(\wh \Gamma)$
and
$[H^2(\wh \Gamma), L^2(\wh \Gamma)]_{(1+2\alpha)/4} = H^{1/2+\alpha}(\wh \Gamma)$
for $\alpha\in\{0,1\}$, in both cases with equivalent norms.
Next we extend the inverse inequality to fractional order spaces.
\begin{lemma}\label{lem:interpolinverse}
	For $r\in \{1,2\}$ and $s\in\{1/2,3/2\}$, $s\le r$, and all $w\in \wh W$, the estimate
	$
	\|w\|_{H^r(\wh\Gamma)}^2
	\lesssim (p^4 \wh h^{-2})^{r-s} \|w\|_{H^s(\wh\Gamma)}^2
	$
	holds.
\end{lemma}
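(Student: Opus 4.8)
The plan is to bootstrap from the integer-order inverse inequality of Lemma~\ref{lem:schwabinverse} to the two admissible half-integer source exponents by real (Hilbert space) interpolation, identifying the fractional spaces through Lemma~\ref{lem:h00}.

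First I would upgrade Lemma~\ref{lem:schwabinverse} from seminorms to full norms and from $\wh h_{\min}$ to $\wh h$. Since $\|w\|_{H^j(\wh\Gamma)}^2=\sum_{m=0}^{j}|w|_{H^m(\wh\Gamma)}^2$, and each seminorm with $m\ge i$ obeys Lemma~\ref{lem:schwabinverse} while the terms with $m<i$ are bounded by $\|w\|_{H^i(\wh\Gamma)}$, the quasi-uniformity~\eqref{eq:ass:quasi uniform} (giving $\wh h_{\min}^{-1}\le C_Q^{-1}\wh h^{-1}$) together with $p^2\wh h^{-1}\ge 1$ yields the integer inverse estimate $\|w\|_{H^j(\wh\Gamma)}\lesssim (p^2\wh h^{-1})^{j-i}\|w\|_{H^i(\wh\Gamma)}$ for all integers $0\le i\le j\le 2\le p$ and all $w\in\wh W$; the absolute factor $(2\sqrt3)^{j-i}$ is harmless because $j-i\le 2$.

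For the half-integer exponent $s$, write $\lfloor s\rfloor$ and $\lceil s\rceil=\lfloor s\rfloor+1$ for the straddling integers, both of which satisfy $\lfloor s\rfloor\le\lceil s\rceil\le r$ in each of the three admissible pairs $(r,s)\in\{(1,\tfrac12),(2,\tfrac12),(2,\tfrac32)\}$. The idea is to interpolate the identity on $\wh W$ in its source exponent while keeping the target fixed at $H^r(\wh\Gamma)$. By the previous step the inclusion is bounded from $H^{\lfloor s\rfloor}(\wh\Gamma)$ and from $H^{\lceil s\rceil}(\wh\Gamma)$ into $H^r(\wh\Gamma)$ with norms $M_0\lesssim(p^2\wh h^{-1})^{r-\lfloor s\rfloor}$ and $M_1\lesssim(p^2\wh h^{-1})^{r-\lceil s\rceil}$, respectively. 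Interpolating at $\theta=\tfrac12$, so that the source becomes $[H^{\lfloor s\rfloor}(\wh\Gamma),H^{\lceil s\rceil}(\wh\Gamma)]_{1/2}=H^s(\wh\Gamma)$ by Lemma~\ref{lem:h00} (using $\lfloor s\rfloor+\lceil s\rceil=2s$), the operator-norm bound $M_0^{1/2}M_1^{1/2}\lesssim(p^2\wh h^{-1})^{r-s}$ equals $(p^4\wh h^{-2})^{(r-s)/2}$, whose square is the asserted constant.

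The step needing care, and what I expect to be the main obstacle, is that real interpolation is a statement on the ambient Sobolev couple, whereas the inverse bounds hold only on $\wh W$. To make the operator interpolation rigorous I would take the map to be $\iota\circ P$, where $P$ is the $L^2(\wh\Gamma)$-orthogonal projection onto $\wh W$ and $\iota:\wh W\hookrightarrow H^r(\wh\Gamma)$, so that it is defined on the whole couple and equals the identity on $\wh W$; the bounds $M_0,M_1$ then additionally require the stability $\|Pw\|_{H^m(\wh\Gamma)}\lesssim\|w\|_{H^m(\wh\Gamma)}$ for $m\in\{1,2\}$. This projection stability, uniform in $\wh h$ on the quasi-uniform grid~\eqref{eq:ass:quasi uniform}, is where the finite-dimensional structure of $\wh W$ enters, and it must be established with a constant independent of both $\wh h$ and $p$ so as not to spoil the explicit $p$-power in the claim. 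Equivalently, one may phrase this as exhibiting $\wh W$ as a retract of the couple $(H^{\lfloor s\rfloor}(\wh\Gamma),H^{\lceil s\rceil}(\wh\Gamma))$ via $P$, after which the subspace interpolation norm coincides with $\|\cdot\|_{H^s(\wh\Gamma)}$ up to such constants.
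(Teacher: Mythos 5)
Your reduction to operator interpolation is set up correctly in its bookkeeping (the exponent arithmetic $M_0^{1/2}M_1^{1/2}\lesssim(p^2\wh h^{-1})^{r-s}$ does give the claimed constant, and the upgrade of Lemma~\ref{lem:schwabinverse} to full norms via quasi-uniformity is fine), but the argument does not close: the ingredient you yourself flag as needed --- $H^m$-stability of the $L^2(\wh\Gamma)$-orthogonal projection $P$ onto $\wh W$ for $m\in\{1,2\}$, with constants independent of both $\wh h$ and $p$ --- is not supplied, and it is not an off-the-shelf fact. $H^1$-stability of the $L^2$-projection onto spline spaces with $p$-independent constants is already a nontrivial recent result, and here you would additionally need $H^2$-stability (for the pair $(r,s)=(2,3/2)$, where $\lceil s\rceil=2$) on the space $\wh W$, which carries the extra boundary constraints $w(0)=w(1)=w'(0)=w'(1)=0$. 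Any attempt to derive $H^2$-stability from $H^1$-stability via inverse and approximation estimates reintroduces powers of $p$ and would spoil exactly the explicit $p$-dependence the lemma is asserting. So as written, the crux of your route is deferred rather than proved.

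The paper avoids this obstacle entirely by interpolating in the other direction: it writes $\|w\|_{H^r(\wh\Gamma)}\eqsim\|w\|_{[H^s(\wh\Gamma),H^{r+1}(\wh\Gamma)]_\theta}$ with $\theta=(r-s)/(r-s+1)$ (reiteration theorem plus Lemma~\ref{lem:h00}), uses the interpolation \emph{inequality} $\|w\|_{H^r}\lesssim\|w\|_{H^{r+1}}^\theta\|w\|_{H^s}^{1-\theta}$ --- a statement about the function $w$ itself, valid without any projection --- and only then applies the discrete inverse estimate to the single factor $\|w\|_{H^{r+1}}\lesssim\wh h^{-1}p^2\|w\|_{H^r}$, which is legitimate because $w\in\wh W$ is a spline. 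The resulting $\|w\|_{H^r}^\theta$ on the right is absorbed into the left-hand side, and $\theta/(1-\theta)=r-s$ yields the stated power. If you want to salvage your operator-interpolation route, you would either have to prove the $p$-uniform $H^1$/$H^2$ projection stability on $\wh W$ (a substantial task in its own right), or switch to this absorption argument, which needs no retract structure at all.
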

\begin{proof}
	Using the reiteration theorem (Theorem~6.1 in Ref.~\refcite{Lions:1972})
	and Lemma~\ref{lem:h00},
	we have
	$
			\|w\|_{H^r(\wh \Gamma)}
			\eqsim
			\|w\|_{[H^s(\wh\Gamma),H^{r+1}(\wh \Gamma)]_\theta}
	$
	with $\theta = (r-s)/(r-s+1)$.
	Using the monotony of the interpolation (Proposition~2.3 in Ref.~\refcite{Lions:1972})
	and Lemma~\ref{lem:schwabinverse},
	we further obtain
	$
			\|w\|_{H^r(\wh \Gamma)}
			\lesssim
			\|w\|_{H^{r+1}(\wh \Gamma)}^\theta
			\|w\|_{H^s(\wh\Gamma)}^{1-\theta}
			\lesssim
			(\wh h^{-1} p^2)^\theta \|w\|_{H^r(\wh \Gamma)}^\theta
			\|w\|_{H^s(\wh\Gamma)}^{1-\theta},
	$
	which immediately implies the desired result.
\end{proof}

\begin{lemma}\label{lem:h00 estim}
	For all $w\in \wh W$, we have
	\begin{align*}
	\|w\|_{H_{00}^{3/2}(\wh \Gamma)}^2
	&\lesssim
		| w' |_{H^{1/2}(\wh \Gamma)}^2
		+
		(1+\log p + \log \wh h^{-1})
		\|  w ' \|_{L^{\infty}(\wh \Gamma)}^2,
	\\
	\|w\|_{H_{00}^{1/2}(\wh \Gamma)}^2
	&\lesssim
		| w |_{H^{1/2}(\wh \Gamma)}^2
		+
		(1+\log p + \log \wh h^{-1})
		\|w\|_{L^{\infty}(\wh \Gamma)}^2.
	\end{align*}
\end{lemma}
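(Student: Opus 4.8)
The plan is to reduce both inequalities to a single weighted (Hardy-type) estimate and to isolate the logarithmic factor in one elementary polynomial bound. First I would unfold the two norms using the definitions of $H^{n+1/2}$ and $H^{n+1/2}_{00}$ together with $|w|_{H^{3/2}(\wh\Gamma)}=|w'|_{H^{1/2}(\wh\Gamma)}$, which gives
\[
	\|w\|_{H^{1/2}_{00}(\wh\Gamma)}^2 = |w|_{H^{1/2}(\wh\Gamma)}^2 + \|w\|_{L^2(\wh\Gamma)}^2 + \|\delta^{-1/2}w\|_{L^2(\wh\Gamma)}^2,
\]
\[
	\|w\|_{H^{3/2}_{00}(\wh\Gamma)}^2 = |w'|_{H^{1/2}(\wh\Gamma)}^2 + \|w\|_{L^2(\wh\Gamma)}^2 + \|w'\|_{L^2(\wh\Gamma)}^2 + \|\delta^{-1/2}w'\|_{L^2(\wh\Gamma)}^2.
\]
In both cases the fractional seminorm already appears on the right-hand side, so only the plain and weighted $L^2$-terms remain. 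Since $w\in\wh W\subset H^2_0(\wh\Gamma)$, we have $w(0)=w(1)=0$ and $w'(0)=w'(1)=0$; by the fundamental theorem of calculus $\|w\|_{L^\infty(\wh\Gamma)}\le\|w'\|_{L^1(\wh\Gamma)}\le\|w'\|_{L^\infty(\wh\Gamma)}$, so the plain $L^2$-terms are harmless ($\|w\|_{L^2}^2\le\|w\|_{L^\infty}^2$ and $\|w'\|_{L^2}^2\le\|w'\|_{L^\infty}^2$). Everything therefore hinges on the weighted terms $\|\delta^{-1/2}w\|_{L^2}^2$ and $\|\delta^{-1/2}w'\|_{L^2}^2$.

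Next I would prove the key estimate and apply it with $v=w$ (a spline of degree $p$) and $v=w'$ (a spline of degree $p-1$): for any spline $v$ of degree $\le p$ on $\wh\Gamma$ with $v(0)=v(1)=0$,
\[
	\int_0^1 \frac{|v(\wh y)|^2}{\wh y(1-\wh y)}\,\mathrm d\wh y \lesssim (1+\log p + \log\wh h^{-1})\,\|v\|_{L^\infty(\wh\Gamma)}^2 .
\]
By the symmetry $\wh y\mapsto 1-\wh y$ it suffices to bound $\int_0^{1/2}|v|^2/\wh y\,\mathrm d\wh y$, using $1-\wh y\ge\tfrac12$ there. Let $\tau$ be the first nonzero knot, so that $v$ restricts to a single polynomial of degree $\le p$ on $[0,\tau]$; by~\eqref{eq:ass:quasi uniform} and~\eqref{eq:hlequarter} we have $\wh h_{\min}\le\tau\le\wh h\le\tfrac14$ and $\tau\ge C_Q\wh h$. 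On the \emph{bulk} part I would simply estimate $\int_\tau^{1/2}|v|^2/\wh y\,\mathrm d\wh y\le\|v\|_{L^\infty}^2\log\frac{1}{2\tau}\lesssim(1+\log\wh h^{-1})\|v\|_{L^\infty}^2$.

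The main obstacle, and the source of the $\log p$, is the first element $[0,\tau]$, where a crude $L^\infty$ bound diverges. Here I would use $v(0)=0$ together with Markov's inequality. By scale invariance of $\int_0^\tau|v|^2/\wh y\,\mathrm d\wh y$ under $\wh y\mapsto\tau\wh y$, it suffices to treat a polynomial $q$ of degree $\le p$ on $[0,1]$ with $q(0)=0$ and $\|q\|_{L^\infty(0,1)}=\|v\|_{L^\infty([0,\tau])}$. Splitting at $p^{-2}$, the outer part $\int_{p^{-2}}^1|q|^2/\wh y\,\mathrm d\wh y\le\|q\|_{L^\infty}^2\log(p^2)=2\log p\,\|q\|_{L^\infty}^2$ produces the logarithm, while on $[0,p^{-2}]$ the bound $|q(\wh y)|\le\wh y\,\|q'\|_{L^\infty(0,1)}\le 2p^2\wh y\,\|q\|_{L^\infty(0,1)}$ (Markov) gives $\int_0^{p^{-2}}|q|^2/\wh y\,\mathrm d\wh y\le 4p^4\|q\|_{L^\infty}^2\int_0^{p^{-2}}\wh y\,\mathrm d\wh y\lesssim\|q\|_{L^\infty}^2$. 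Combining the two pieces proves the polynomial bound, hence the key estimate. Applying it to $w$ and to $w'$, and recalling $\|w\|_{L^\infty}\le\|w'\|_{L^\infty}$ where the first inequality requires it, then yields both claimed bounds.
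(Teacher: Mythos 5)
Your proof is correct, and for the central step it takes a genuinely different route from the paper. Both arguments reduce the claim to bounding the weighted term $\int_0^1 w^2(t)/(t(1-t))\,\mathrm dt$ (resp.\ the same integral for $w'$), but they handle it differently. The paper splits the integral at an \emph{analytic} threshold $\epsilon=p^{-4}\wh h^2$: near the endpoints it uses the fundamental theorem of calculus and Cauchy--Schwarz to produce $\epsilon\,|w|_{H^1}^2$ (see~\eqref{eq:bdyint}), and it then needs the fractional inverse inequality of Lemma~\ref{lem:interpolinverse} to absorb that term into $\|w\|_{H^{1/2}(\wh\Gamma)}^2$. You instead split at the first \emph{knot} $\tau\eqsim\wh h$ and, on the first polynomial piece, again at $\tau p^{-2}$, using $v(0)=0$ together with Markov's inequality $\|q'\|_{L^\infty(0,1)}\le 2p^2\|q\|_{L^\infty(0,1)}$ to control the innermost part; the two logarithms $\log\wh h^{-1}$ and $\log p$ then come from the two ``bulk'' integrals of $1/\wh y$. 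Your version is more elementary and self-contained (it never leaves $L^\infty$ and does not invoke the interpolation-space machinery behind Lemma~\ref{lem:interpolinverse}), at the price of exploiting the piecewise-polynomial structure of $\wh W$ in the near-endpoint region, whereas the paper's endpoint estimate is valid for arbitrary $H^1$ functions and only becomes spline-specific through the inverse inequality. The remaining bookkeeping — unfolding the $H^{1/2}_{00}$ and $H^{3/2}_{00}$ norms, discarding the plain $L^2$ terms via $\|w\|_{L^\infty}\le\|w'\|_{L^1}\le\|w'\|_{L^\infty}$, and applying the key estimate to both $w$ and $w'$ — matches the paper's treatment in substance (the paper uses a Poincar\'e--Friedrichs inequality where you use the fundamental theorem of calculus, to the same effect).
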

\begin{proof}
		Let $w\in \wh W$ arbitrary but fixed.
		For any $\epsilon \in (0,1/2)$, we have using the fundamental lemma of calculus
		and because of $w(0)=0$ that
		\begin{align*}
			\int_0^\epsilon \frac{w^2(t)}{t(1-t)}  \mathrm dt
			&= \int_0^\epsilon \frac{1}{t(1-t)} \left(\int_0^t  w'(t) \,\mathrm dx\right)^2 \mathrm dt
			\lesssim\epsilon |w|_{H^1(0,1/2)}^2.
		\end{align*}
		Using $w(1)=0$, we also obtain
		$\int_{1-\epsilon}^1 \frac{w^2(t)}{t(1-t)}  \mathrm dt
		\lesssim\epsilon |w|_{H^1(1/2,1)}^2$.
		Additionally, we have
		\begin{align*}
			\int_\epsilon^{1-\epsilon} \frac{w^2(t)}{t(1-t)}  \mathrm dt
			& \le
			\int_\epsilon^{1-\epsilon} \frac{1}{t(1-t)} \mathrm dt \|w\|_{L^\infty(0,1)}^2
			\lesssim   \log(\epsilon^{-1})\|w\|_{L^\infty(0,1)}^2.
		\end{align*}
		By combining these results, we obtain
		\begin{align}\label{eq:bdyint}
			\int_0^1 \frac{w^2(t)}{t(1-t)}  \mathrm dt
			& \le
			\epsilon |w|_{H^1(0,1)}^2
			+  \log(\epsilon^{-1}) \|w\|_{L^\infty(0,1)}^2.
		\end{align}
		Using
		Lemma~\ref{lem:h00},
		\eqref{eq:bdyint},
		Lemma~\ref{lem:interpolinverse},
		the choice $\epsilon := p^{-4} \wh h^{2}$, and
		Theorem~8.3 in Ref.~\refcite{Lions:1972}, we obtain
		\begin{equation}\label{eq:thrm42:eq2}
		\begin{aligned}
			\|w\|_{H^{1/2}_{00}(\wh \Gamma)}^2
			&
			=
			\|w\|_{H^{1/2}(\wh \Gamma)}^2
			+
			\int_0^1 \frac{w^2(t)}{t(1-t)}  \mathrm dt
			\\
			&
			\lesssim
			\|w\|_{H^{1/2}(\wh \Gamma)}^2
			+
			\epsilon
			|w|_{H^{1}(\wh \Gamma)}^2
			+
			\log(\epsilon^{-1})
			\|w\|_{L^{\infty}(\wh \Gamma)}^2
			\\
			&
			\lesssim
			\|w\|_{H^{1/2}(\wh \Gamma)}^2
			+
			 (1+\log p + \log \wh h^{-1})
			\|w\|_{L^{\infty}(\wh \Gamma)}^2.
		\end{aligned}
		\end{equation}
		Since $\|w\|_{H^{1/2}(\wh \Gamma)}^2=
		\|w\|_{L^{2}(\wh \Gamma)}^2+ |w|_{H^{1/2}(\wh \Gamma)}^2$
		and $\|w\|_{L^{2}(\wh \Gamma)}^2\le \|w\|_{L^{\infty}(\wh \Gamma)}^2$, the desired estimate immediately follows.
		Using the same arguments, we also obtain for every $w\in \wh W$
		\begin{equation}\label{eq:thrm42:eq3}
		\begin{aligned}
			\|w\|_{H^{3/2}_{00}(\wh \Gamma)}^2
			&
			\lesssim
			\|w\|_{H^{3/2}(\wh \Gamma)}^2
			+
			(1+\log p + \log \wh h^{-1})
			\| w'\|_{L^{\infty}(\wh \Gamma)}^2.
		\end{aligned}
		\end{equation}
		Again, we use
		$\|w\|_{H^{3/2}(\wh \Gamma)}^2=
		\|w\|_{L^{2}(\wh \Gamma)}^2 +
		|w|_{H^{1}(\wh \Gamma)}^2 +
		|w'|_{H^{1/2}(\wh \Gamma)}^2$.
		Since $w\in \wh W$ vanishes at both ends of $\wh \Gamma$, a standard Poincaré-Friedrichs inequality, cf. Theorem~A.18 in Ref.~\refcite{ToselliWidlund:2005a}, yields $\|w\|_{L^{2}(\wh \Gamma)}^2 \lesssim |w|_{H^{1}(\wh \Gamma)}^2$. Since we also have
		$|w|_{H^{1}(\wh \Gamma)}^2\le \| w'\|_{L^{\infty}(\wh \Gamma)}^2$,		this finishes the proof.
\end{proof}

By combining these results, we obtain as follows.
\begin{theorem}\label{thrm:ealpha bound0}
	For all $w\in \wh W$, we have
	\begin{align*}
		\|\wh E_0 w\|_{H^2(\wh \Omega)}^2
		&\lesssim
		p^3
		|w'|_{H^{1/2}(\wh\Gamma)}^2
		+
		p^3
		(1+\log p + \log \wh h^{-1}) \| w' \|_{L^{\infty}(\wh \Gamma)}^2,
		\\
		\|\wh E_1 w\|_{H^2(\wh \Omega)}^2
		&\lesssim
		p^3
		|w|_{H^{1/2}(\wh\Gamma)}^2
		+
		p^3
		(1+\log p + \log \wh h^{-1}) \| w \|_{L^{\infty}(\wh \Gamma)}^2.
	\end{align*}
\end{theorem}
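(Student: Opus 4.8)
The plan is to start from the seminorm estimate of Lemma~\ref{lem:E bound} and to convert each of its three contributions into the right-hand sides claimed here, using the interpolation machinery of Lemmas~\ref{lem:47}, \ref{lem:h00} and~\ref{lem:h00 estim} together with the inverse estimate of Lemma~\ref{lem:interpolinverse}. As a preliminary reduction I would first replace the full norm by the seminorm. Since $\wh E_\alpha w\in\wh V_0$, both $\wh E_\alpha w$ and its gradient vanish on $\partial\wh\Omega\setminus\wh\Gamma$, i.e.\ on three sides of the unit square (the tangential derivative vanishes because the trace is identically zero there). Hence two applications of a Poincaré--Friedrichs inequality on $\wh\Omega$, first to $\wh E_\alpha w$ and then componentwise to $\nabla\wh E_\alpha w$, give $\|\wh E_\alpha w\|_{H^2(\wh\Omega)}^2\lesssim|\wh E_\alpha w|_{H^2(\wh\Omega)}^2$ with an absolute constant, so it suffices to bound the three terms on the right of Lemma~\ref{lem:E bound}.

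For the leading eigenvalue term I would first peel off the degree using the elementary inequality $\min(\lambda_i,p\wh h^{-1})\le p\min(\lambda_i,\wh h^{-1})$, which one verifies on the three ranges cut out by $\wh h^{-1}$ and $p\wh h^{-1}$; it yields $\min(\lambda_i,p\wh h^{-1})^{3-2\alpha}\le p^{3-2\alpha}\min(\lambda_i,\wh h^{-1})^{3-2\alpha}$. Lemma~\ref{lem:47} then controls the remaining sum by $\|w\|_{[H^2_0(\wh\Gamma),L^2(\wh\Gamma)]_{(1+2\alpha)/4}}^2$, and Lemma~\ref{lem:h00} (with $s_1=2$, $s_2=0$, so $s=(3-2\alpha)/2$) identifies this interpolation space with $H^{3/2-\alpha}_{00}(\wh\Gamma)$. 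Finally Lemma~\ref{lem:h00 estim} converts the result into $|w'|_{H^{1/2}(\wh\Gamma)}^2+(1+\log p+\log\wh h^{-1})\|w'\|_{L^\infty(\wh\Gamma)}^2$ when $\alpha=0$ (via $H^{3/2}_{00}$ and its first inequality) and into the same expression with $w$ in place of $w'$ when $\alpha=1$ (via $H^{1/2}_{00}$ and its second inequality). This already produces the claimed structure up to the two remaining lower-order terms.

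The term $p^{3-2\alpha}\|w\|_{L^2(\wh\Gamma)}^2$ is harmless: for $\alpha=1$ I bound $\|w\|_{L^2}^2\le\|w\|_{L^\infty}^2$, and for $\alpha=0$ I use that $w\in H^2_0(\wh\Gamma)$ vanishes at the endpoints, so Poincaré gives $\|w\|_{L^2}^2\lesssim\|w'\|_{L^2}^2\le\|w'\|_{L^\infty}^2$; in both cases it is absorbed by the logarithmic term. The delicate contribution, and the one I expect to carry the real work, is $(p\wh h^{-1})^{-1-2\alpha}|w|_{H^2(\wh\Gamma)}^2$, because the spurious factor $\wh h^{-1}$ must be removed by an inverse estimate without reintroducing any $h$-dependence. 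Here I apply Lemma~\ref{lem:interpolinverse} with $r=2$ and $s=3/2-\alpha$, i.e.\ $\|w\|_{H^2}^2\lesssim(p^4\wh h^{-2})^{1/2+\alpha}\|w\|_{H^{3/2-\alpha}}^2$; multiplying by $(p\wh h^{-1})^{-1-2\alpha}$ cancels the powers of $\wh h$ exactly and leaves $p^{1+2\alpha}\|w\|_{H^{3/2-\alpha}}^2$. Reducing $\|w\|_{H^{3/2-\alpha}}^2$ by Poincaré to $|w'|_{H^{1/2}}^2+\|w'\|_{L^\infty}^2$ for $\alpha=0$, and to $|w|_{H^{1/2}}^2+\|w\|_{L^\infty}^2$ for $\alpha=1$, closes this term. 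It is worth flagging the resulting asymmetry in the degree bookkeeping: for $\wh E_1$ this inverse-estimate term carries the full $p^{3}$ and is in fact the dominant contribution, whereas for $\wh E_0$ it is only of order $p$ and the $p^{3}$ originates from the eigenvalue term; since $p\ge 2$ both are bounded by $p^3(1+\log p+\log\wh h^{-1})$ times the appropriate trace quantities. Summing the three contributions gives the two stated estimates.
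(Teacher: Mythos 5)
Your proposal is correct and follows essentially the same route as the paper: Lemma~\ref{lem:E bound} combined with Lemma~\ref{lem:47}, the inverse estimate of Lemma~\ref{lem:interpolinverse}, a Poincar\'e--Friedrichs argument to pass from the $H^2$-seminorm to the full norm, and finally Lemmas~\ref{lem:h00} and~\ref{lem:h00 estim} to land in the stated trace quantities. Your explicit bookkeeping of the third term (yielding $p^{1+2\alpha}$ from the inverse estimate, hence $p^3$ being dominant for $\wh E_1$ via that term rather than via the eigenvalue term) is in fact more careful than the paper's intermediate display, and it confirms the stated $p^3$ in both bounds.
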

\begin{proof}
	Let $\alpha\in\{0,1\}$.
	By combining
	Lemmas~\ref{lem:E bound},
	\ref{lem:47}, and
	\ref{lem:interpolinverse} and the monotonicity result
	$\|\cdot\|_{L^2(\wh\Gamma)} \lesssim \|\cdot\|_{[H^2_0(\wh\Gamma),L^2(\wh\Gamma)]_{(1+2\alpha)/4}}$, we obtain
	\[
		|\wh  E_\alpha w|_{H^2(\wh\Omega)}^2\lesssim p^{3-2\alpha}	\|w\|^2_{[H^2_0(\wh\Gamma),L^2(\wh\Gamma)]_{(1+2\alpha)/4}}.
	\]
	Since $E_\alpha w$ and its derivatives vanish on $\partial\wh\Omega\setminus\wh\Gamma$, one obtains
	$\|\wh E_\alpha w\|_{L^2(\wh\Omega)}^2
	\lesssim |\wh E_\alpha w|_{H^1(\wh\Omega)}^2$
	using a standard Poincaré-Friedrichs inequality, cf. Theorem~A.18 in Ref.~\refcite{ToselliWidlund:2005a}, and by applying such inequality to $\nabla\wh E_\alpha w$ also
	$|\wh E_\alpha w|_{H^1(\wh\Omega)}^2
	\lesssim |\wh E_\alpha w|_{H^2(\wh\Omega)}^2$.
	Combining these results yields
	\[
		\|\wh E_\alpha w\|_{H^2(\wh\Omega)}^2
		\lesssim p^{3-2\alpha}
		\|w\|^2_{[H^2_0(\wh\Gamma),L^2(\wh\Gamma)]_{(1+2\alpha)/4}}.
	\]
	By combining this with
	Lemmas~\ref{lem:h00} and
	\ref{lem:h00 estim}, we obtain the desired estimate.
\end{proof}

The extension operators $\wh E_\alpha$ extend functions that are defined on the edge $\wh \Gamma$ to the interior of the patch, i.e., $\wh \Omega$. For second order problems, such kind of extension operators were sufficient in order to analyze IETI-DP solvers, cf. the proofs in Ref.~\refcite{SchneckenleitnerTakacs:2020}. For fourth order problems this is not as straight forward. In order to be able to define an extension operator for the physical patch later, we introduce an extension operator $\wh E: \wh V_\Delta \to \wh V_\Delta$ that maps any function $\wh u  \in \wh V_\Delta$ to another function $\wh v  \in \wh V_\Delta$ whose function values and derivatives coincide with those of $\wh u$ and such that the norm of $\wh v$ can be  bounded by terms that only depend on its values and derivatives on $\wh \Gamma$.
So, we define
\begin{equation}\label{eq:whEdef}
		\wh E: \wh V_\Delta \to \wh V_\Delta
		\quad\mbox{via}\quad
		\wh E \,\wh u
		:= \wh E_0 ( \wh u|_{\wh \Gamma})
		+ \wh E_1 ( \partial_{n} \wh u|_{\wh \Gamma})
		\quad\mbox{for all}\quad \wh u \in \wh V_\Delta.
\end{equation}
We immediately obtain as follows.
\begin{theorem}\label{thrm:ealpha bound}
	The statements
	\[
		(\wh E \,\wh u)|_{\wh \Gamma} = \,\wh u|_{\wh \Gamma},
		\quad
		\nabla (\wh E \,\wh u)|_{\wh \Gamma} = \nabla \wh u|_{\wh \Gamma},
\quad
		(\wh E \,\wh u)|_{\partial\wh\Omega\setminus\wh \Gamma} = 0,
		\quad
		\nabla (\wh E \,\wh u)|_{\partial\wh\Omega\setminus\wh \Gamma} = 0
	\]
	and the estimate
	\[
		\|\wh E \,\wh u \|_{H^2(\wh \Omega)}^2
		\lesssim
		p^3
		|\nabla \wh u |_{H^{1/2}(\wh\Gamma)}^2
		+
		p^3
		(1+\log p + \log \wh h^{-1})
		\| \nabla \wh u \|_{L^{\infty}(\wh \Gamma)}^2
	\]
	hold for all $\wh u \in  \wh V_\Delta$.
\end{theorem}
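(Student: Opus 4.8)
The plan is to read off both assertions directly from the structural results already established for $\wh E_0$ and $\wh E_1$, so that the only genuine work is bookkeeping: keeping track of which derivative is normal and which is tangential on $\wh\Gamma=\{0\}\times[0,1]$, where the outer normal points in the $\wh x$-direction and the tangent in the $\wh y$-direction.

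First I would verify the trace and gradient identities. By Lemma~\ref{lem:E is extension}, $\wh E_0(\wh u|_{\wh\Gamma})$ reproduces $\wh u|_{\wh\Gamma}$ on $\wh\Gamma$ and has vanishing normal derivative there, while $\wh E_1(\partial_n\wh u|_{\wh\Gamma})$ vanishes on $\wh\Gamma$ but reproduces $\partial_n\wh u|_{\wh\Gamma}$. Summing gives $(\wh E\,\wh u)|_{\wh\Gamma}=\wh u|_{\wh\Gamma}$ and $\partial_n(\wh E\,\wh u)|_{\wh\Gamma}=\partial_n\wh u|_{\wh\Gamma}$. Since two functions agreeing on the edge $\wh\Gamma$ necessarily share the same tangential ($\partial_{\wh y}$) derivative there, matching the value and the normal derivative is equivalent to matching the whole gradient, yielding $\nabla(\wh E\,\wh u)|_{\wh\Gamma}=\nabla\wh u|_{\wh\Gamma}$. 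On the remaining sides $\partial\wh\Omega\setminus\wh\Gamma$ both summands lie in $\wh V_0$, hence vanish together with their normal derivatives; as the function itself vanishes there, so does its tangential derivative, giving $(\wh E\,\wh u)|_{\partial\wh\Omega\setminus\wh\Gamma}=0$ and $\nabla(\wh E\,\wh u)|_{\partial\wh\Omega\setminus\wh\Gamma}=0$.

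For the norm bound I would apply the triangle inequality to $\wh E\,\wh u=\wh E_0(\wh u|_{\wh\Gamma})+\wh E_1(\partial_n\wh u|_{\wh\Gamma})$ and then invoke Theorem~\ref{thrm:ealpha bound0} on each summand. With $w=\wh u|_{\wh\Gamma}$ the $\wh E_0$-estimate produces the tangential-derivative terms $|\partial_{\wh y}\wh u|_{H^{1/2}(\wh\Gamma)}^2$ and $\|\partial_{\wh y}\wh u\|_{L^\infty(\wh\Gamma)}^2$ (since there $w'=\partial_{\wh y}(\wh u|_{\wh\Gamma})$), whereas with $w=\partial_n\wh u|_{\wh\Gamma}=\partial_{\wh x}\wh u|_{\wh\Gamma}$ the $\wh E_1$-estimate produces the normal-derivative terms $|\partial_{\wh x}\wh u|_{H^{1/2}(\wh\Gamma)}^2$ and $\|\partial_{\wh x}\wh u\|_{L^\infty(\wh\Gamma)}^2$. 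Both carry the same prefactors $p^3$ and $p^3(1+\log p+\log\wh h^{-1})$.

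Finally I would recombine the two components into the full gradient: summing the $H^{1/2}$-seminorms of $\partial_{\wh x}\wh u$ and $\partial_{\wh y}\wh u$ gives $|\nabla\wh u|_{H^{1/2}(\wh\Gamma)}^2$, and summing the two $L^\infty$-norms is equivalent to $\|\nabla\wh u\|_{L^\infty(\wh\Gamma)}^2$, which delivers exactly the claimed estimate. I do not expect any serious obstacle, as the content is entirely inherited from Theorem~\ref{thrm:ealpha bound0} and Lemma~\ref{lem:E is extension}, which is why the statement is flagged as immediate. The only point demanding care is the identification $w'=\partial_{\wh y}\wh u$ versus $w=\partial_n\wh u=\partial_{\wh x}\wh u$, that is, reading the prime in Theorem~\ref{thrm:ealpha bound0} as the tangential derivative along $\wh\Gamma$ so that the $\wh E_0$ and $\wh E_1$ contributions assemble cleanly into $\nabla\wh u$ rather than being mismatched.
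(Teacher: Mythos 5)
Your proposal is correct and follows essentially the same route as the paper: the trace and boundary identities are read off from the definition \eqref{eq:whEdef} together with Lemma~\ref{lem:E is extension}, and the norm bound is obtained by the triangle inequality plus Theorem~\ref{thrm:ealpha bound0} applied to the two summands, with $w'$ identified as the tangential derivative $\partial_{\wh y}\wh u$ for the $\wh E_0$ term and $w=\partial_{\wh x}\wh u$ for the $\wh E_1$ term, before reassembling into $\nabla\wh u$. The paper's proof is terser (it declares the identities immediate), but the content and the key bookkeeping point you flag are identical.
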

\begin{proof}
	Let $\wh u \in \wh V_\Delta$ be arbitrary but fixed
	and let $C_\Lambda:=1+\log p + \log \wh h^{-1}$.
	The first four statements are an immediate consequence of~\eqref{eq:whEdef}
	and Lemma~\ref{lem:E is extension}.
	Using \eqref{eq:whEdef}, the triangle inequality and Theorem~\ref{thrm:ealpha bound0}, we obtain
	\begin{align*}
		&\|\wh E \,\wh u \|_{H^2(\wh \Omega)}^2
		\lesssim
		\|\wh E_0 \,\wh u\|_{H^2(\wh \Omega)}^2
		+
		\|\wh E_1 \partial_{\wh x} \,\wh u\|_{H^2(\wh \Omega)}^2
		\\
		&\qquad\lesssim
		p^3
		|\partial_{\wh y} \wh u|_{H^{1/2}(\wh\Gamma)}^2
		+
		p^3C_\Lambda \| \partial_{\wh y} \wh u \|_{L^{\infty}(\wh \Gamma)}^2
		+
		p^3
		|\partial_{\wh x} \wh u|_{H^{1/2}(\wh\Gamma)}^2
		+
		p^3C_\Lambda \| \partial_{\wh x} \wh u \|_{L^{\infty}(\wh \Gamma)}^2
		\\
		&\qquad=
		p^3|\nabla \wh u|_{H^{1/2}(\wh\Gamma)}^2
		+
		p^3C_\Lambda \| \nabla \wh u \|_{L^{\infty}(\wh \Gamma)}^2,
	\end{align*}
	i.e., the desired estimate.
\end{proof}

For completeness, we provide a proof of Lemma~\ref{lem:approxerror2}.
\begin{proofof}{Lemma~\ref{lem:approxerror2}}
	We choose $\Pi: H^2_0(0,1) \to S_{2,\Xi} \cap H^2_0(0,1)$ to be the $L^2$-orthogonal projection.
	We define two auxiliary projectors.
	$\Pi_1: H^1_0(0,1) \to S_{2,\Xi} \cap H^1_0(0,1)$ is the $H^1$-orthogonal projection.
	Completely analogous to Lemma~\ref{lem:approxerror}, one can show $\|v-\Pi_1v\|_{L^2(0,1)} \lesssim \wh h^2 |v|_{H^2(0,1)}$ for all $v\in H^1_0(0,1)$. (For the equidistant case, this was proven in Ref.~\refcite{Takacs:2018}.)
	Let $\Pi_2: H^2_0(0,1) \to S_{3,\widetilde\Xi} \cap H^2_0(0,1)$ be the $H^2$-orthogonal projection, where
	 $\widetilde\Xi =(0,\Xi,1)$ is the corresponding $3$-open knot vector.
	Lemma~\ref{lem:approxerror} states $\|v-\Pi_2v\|_{L^2(0,1)} \lesssim \wh h^2 |v|_{H^2(0,1)}$ for all $v\in H^2_0(0,1)$.
	For arbitrary $u\in H^2_0(0,1)$, let $u_h(t):=\Pi_1 u(t) - \psi_{1,L}(t)(\Pi_1 u)'(0) + \psi_{1,L}(1-t)(\Pi_1 u)'(1)$, where $\psi_{1,L}$ is as defined in~\eqref{eq:def:psi} and observe $u_h\in S_{2,\Xi}\cap H^2_0(0,1)$.
	Straight-forward computations yield $\|\psi_{1,L}\|_{L^2(0,1)}^2 \lesssim \wh h^3$.
	Using this, the fundamental theorem of calculus, Lemma~\ref{lem:schwabinverse} and the abovementioned error estimates, we have
	$\|u-\Pi u\|_{L^2(0,1)}
		\le \|u-u_h\|_{L^2(0,1)}
		\le \|u-\Pi_1 u\|_{L^2(0,1)}
			+ \|\psi_{1,L}\|_{L^2(0,1)} ( |(\Pi_1u-\Pi_2u)'(0)|+ |(\Pi_1u-\Pi_2u)'(1)|)
		\lesssim \|u-\Pi_1 u\|_{L^2(0,1)}
			+ \wh h^{3/2} \|\Pi_1u-\Pi_2u\|_{H^1(0,1)}^{1/2}\|\Pi_1u-\Pi_2u\|_{H^2(0,1)}^{1/2}
		\lesssim \|u-\Pi_1 u\|_{L^2(0,1)} + \|\Pi_1u-\Pi_2u\|_{L^2(0,1)}
		\lesssim \|u-\Pi_1 u\|_{L^2(0,1)} + \|u-\Pi_2u\|_{L^2(0,1)}
		\lesssim \wh h^2 |u|_{H^2(0,1)}$,
	i.e., the desired error estimate.
	Completely analogously, we have
	$|\Pi u|_{H^2(0,1)} + |u-\Pi u|_{H^2(0,1)} \lesssim |u|_{H^2(0,1)} + |\Pi_2 u|_{H^2(0,1)}+|\Pi  u- \Pi_2 u|_{H^2(0,1)}
	\lesssim |u|_{H^2(0,1)} + \wh h^{-2} \|\Pi  u- \Pi_2 u\|_{L^2(0,1)}
	\lesssim |u|_{H^2(0,1)}$, i.e., the desired stability result.
\end{proofof}

\subsection{Extension operator on the physical domain}

In this subsection, we denote the patch index explicitly, so $\wh V_k$ and $\wh V_{\Delta}^{(k)}$ denote the spaces defined in~\eqref{eq:Vkdef} and \eqref{eq:vdelta def}, respectively. Analogous to the definition of $V_k$, we also define
$
		V_{\Delta}^{(k)}
		:=
		\{
				v : v\circ G_k \in \wh V_{\Delta}^{(k)}
		\}
$.
We define the extension operator $E^{(k,\ell)}: V_\Delta^{(k)} \to V_\Delta^{(k)}$ using the pull-pack principle. Provided that the pre-image of $\Gamma_{k,\ell}$ is $\wh \Gamma$, we have
\[
			E^{(k,\ell)} w = (\wh E (w \circ G_k))\circ G_k^{-1}.
\]
If the pre-image of $\Gamma_{k,\ell}$ is one of the other three sides of $\wh\Omega$, the definition is analogous.
Before giving an estimate of the norm of $E^{(k,\ell)} w$, we give two Lemmas that guarantee the equivalence of relevant norms.

\begin{lemma}\label{lem:geoequiv1}
	Let $\Omega_k$ be one of the patches,
	$u\in H^2(\Omega_k)$ and $\wh u := u \circ G_k$ be its pullback to $\wh \Omega$.
	Then, $|u|_{H^1(\Omega_k)}^2 \eqsim |\wh u|_{H^1(\wh \Omega)}^2$ and $|u|_{H^2(\Omega_k)}^2 \lesssim H_k^{-2} \|\wh u\|_{H^2(\wh \Omega)}^2$ hold.
	Moreover, for each side $\Gamma_{k,\ell}$ with pull-back $\wh\Gamma_{k,\ell}:=G_k^{-1}(\Gamma_{k,\ell})$, the estimates
	$\|\nabla \wh u\|_{L^{2}(\wh \Gamma_{k,\ell})}^2
				\eqsim H_k  \|\nabla u\|_{L^{2}(\Gamma_{k,\ell})}^2$
	and $\|\nabla \wh u\|_{L^\infty(\wh\Gamma_{k,\ell})}^2
				\eqsim H_k^2 \|\nabla u\|_{L^\infty(\Gamma_{k,\ell})}^2$
	hold.
\end{lemma}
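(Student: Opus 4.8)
The plan is to reduce every statement to a pointwise change-of-variables estimate combined with the volume (or arc-length) transformation, using only the chain rule and the bounds~\eqref{eq:ass:nabla} on $G_k$. Write $J := \nabla G_k$ for the Jacobian. The first order of business is to record two elementary facts. From $\|J\|_{L^\infty}\lesssim H_k$ and $\|J^{-1}\|_{L^\infty}\lesssim H_k^{-1}$ one obtains $|\det J|\eqsim H_k^2$ in two dimensions, so that the volume element transforms as $\mathrm d\xx = |\det J|\,\mathrm d\wh\xx \eqsim H_k^2\,\mathrm d\wh\xx$. The chain rule gives $\nabla_{\wh\xx}\wh u = J^\top (\nabla_\xx u)\circ G_k$; combining the two bounds in~\eqref{eq:ass:nabla} yields the pointwise equivalence $|\nabla\wh u|\eqsim H_k\,|(\nabla u)\circ G_k|$.

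For the first claim I would simply substitute these into $|u|_{H^1(\Omega_k)}^2 = \int_{\wh\Omega} |(\nabla u)\circ G_k|^2\,|\det J|\,\mathrm d\wh\xx$; the factor $H_k^{-2}$ from the gradient and the factor $H_k^2$ from the determinant cancel, leaving $|u|_{H^1(\Omega_k)}^2 \eqsim |\wh u|_{H^1(\wh\Omega)}^2$.

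The second claim is where the work lies. Differentiating the chain-rule identity once more produces $\nabla^2_{\wh\xx}\wh u = J^\top\big((\nabla^2_\xx u)\circ G_k\big)J + R$, where the remainder $R$ collects the terms in which $\nabla^2 G_k$ is paired with $(\nabla u)\circ G_k$, namely $R_{jm}=\sum_i(\partial_{\wh x_j\wh x_m}G_{k,i})\,(\partial_{x_i}u)\circ G_k$. Solving for the physical Hessian gives $(\nabla^2 u)\circ G_k = J^{-\top}(\nabla^2\wh u - R)J^{-1}$, and estimating with $\|J^{-1}\|\lesssim H_k^{-1}$, $\|\nabla^2 G_k\|\lesssim H_k$ and the gradient equivalence from the first step, the curvature contribution is controlled by $\|J^{-1}\|^2\|\nabla^2 G_k\|\,|(\nabla u)\circ G_k| \lesssim H_k^{-2}\cdot H_k\cdot H_k^{-1}|\nabla\wh u| = H_k^{-2}|\nabla\wh u|$. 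Hence $|(\nabla^2 u)\circ G_k|\lesssim H_k^{-2}\big(|\nabla^2\wh u|+|\nabla\wh u|\big)$, and multiplying by $|\det J|\eqsim H_k^2$ and integrating yields $|u|_{H^2(\Omega_k)}^2 \lesssim H_k^{-2}\big(|\wh u|_{H^2(\wh\Omega)}^2+|\wh u|_{H^1(\wh\Omega)}^2\big)\le H_k^{-2}\|\wh u\|_{H^2(\wh\Omega)}^2$. The appearance of the lower-order seminorm on the right is precisely why only the upper bound (with the full norm $\|\wh u\|_{H^2}$), and not an equivalence, is claimed; this is the main obstacle and the only place where the curvature of $G_k$ enters.

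For the trace estimates I would argue exactly as in the first step, but on the edge. The $L^\infty$ bound is immediate and pointwise: the gradient equivalence restricted to $\wh\Gamma_{k,\ell}$ holds by the $L^\infty(\partial\wh\Omega)$ version of~\eqref{eq:ass:nabla}, and taking the supremum over the bijection $G_k:\wh\Gamma_{k,\ell}\to\Gamma_{k,\ell}$ yields $\|\nabla\wh u\|_{L^\infty(\wh\Gamma_{k,\ell})}^2\eqsim H_k^2\|\nabla u\|_{L^\infty(\Gamma_{k,\ell})}^2$. For the $L^2$ bound the only new ingredient is the arc-length element: writing $\tau$ for the unit tangent of $\wh\Gamma_{k,\ell}$, one has $\mathrm ds = |J\tau|\,\mathrm d\wh s$ with $|J\tau|\le\|J\|\lesssim H_k$ and $|J\tau|\ge\|J^{-1}\|^{-1}\gtrsim H_k$, so $\mathrm ds\eqsim H_k\,\mathrm d\wh s$. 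Substituting this together with the pointwise gradient equivalence into $\|\nabla u\|_{L^2(\Gamma_{k,\ell})}^2 = \int_{\wh\Gamma_{k,\ell}}|(\nabla u)\circ G_k|^2\,|J\tau|\,\mathrm d\wh s$ gives $\|\nabla\wh u\|_{L^2(\wh\Gamma_{k,\ell})}^2\eqsim H_k\|\nabla u\|_{L^2(\Gamma_{k,\ell})}^2$, completing the proof.
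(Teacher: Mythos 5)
Your proof is correct and follows essentially the same route as the paper: chain rule plus the substitution rule, with the curvature term $\nabla^2 G_k$ paired with $(\nabla u)\circ G_k$ identified as the reason the $H^2$ estimate is only a one-sided bound involving the full norm $\|\wh u\|_{H^2(\wh\Omega)}$. The only cosmetic difference is that the paper routes the Hessian computation through the rescaled parametrization $\widetilde G_k = H_k^{-1}G_k$ and the scale-free bounds~\eqref{eq:ass:nabla:tilde}, whereas you track the powers of $H_k$ directly; your version is in fact more detailed than the paper's own one-line treatment.
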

\begin{proof}
	The results follow using standard chain an substitution rules and~\eqref{eq:ass:nabla}.
	Special care has to be taken only for the estimate of the $H^2$-seminorm.
	For each fixed $u\in H^2(\Omega_k)$, let the function $\widetilde u$ be given by $\widetilde u(\xx) = u(H_k \xx)$ and set $\wh u := u \circ G_k$.
	Using~\eqref{eq:ass:nabla:tilde} and standard chain and product rules for differentiation and the substitution rule for the integral, we obtain
	$H_k^{2} |u|_{H^2(\Omega_k)}^2= |\widetilde u|_{H^2(H_k^{-1} \Omega_k)}^2
	\lesssim |\wh u|_{H^2(\wh \Omega)}^2
	+ |\wh u|_{H^1(\wh \Omega)}^2
	\le  \|\wh u\|_{H^2(\wh \Omega)}^2$.
	The desired estimate follows immediately.
\end{proof}
\begin{lemma}\label{lem:geoequiv2}
	Let $\Omega_k$ be one of the patches, $\Gamma_{k,\ell}$ one of it sides with pull-back $\wh\Gamma_{k,\ell}:=G_k^{-1}(\Gamma_{k,\ell})$,
	$u\in H^2(\Omega_k)$ and $\wh u := u \circ G_k$.
	Then, the estimate
	$|\nabla \wh u|_{H^{1/2}(\wh \Gamma_{k,\ell})}^2
				\lesssim H_k^2  |\nabla u |_{H^{1/2}(\Gamma_{k,\ell})}^2
					+  H_k \|\nabla u\|_{L^2(\Gamma_{k,\ell})}^2$
	holds.
\end{lemma}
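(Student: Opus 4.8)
The plan is to differentiate the composition, reduce the claim to a Leibniz-type estimate for the $H^{1/2}$-seminorm, and then transport the resulting edge integrals back to the physical edge by a change of variables. By the chain rule, $\nabla \wh u = (\nabla G_k)^\top\,(\nabla u\circ G_k)$ on $\wh\Gamma_{k,\ell}$; writing $A:=(\nabla G_k)^\top$ (a matrix-valued function on $\wh\Gamma_{k,\ell}$) and $v:=\nabla u\circ G_k$, we have $\nabla\wh u = A v$. First I would record, from the Sobolev--Slobodeckij definition and the elementary splitting
\[
A(\wh\xx)v(\wh\xx)-A(\wh{\mathbf y})v(\wh{\mathbf y})
= A(\wh\xx)\bigl(v(\wh\xx)-v(\wh{\mathbf y})\bigr)+\bigl(A(\wh\xx)-A(\wh{\mathbf y})\bigr)v(\wh{\mathbf y}),
\]
the product estimate
\[
|\nabla\wh u|_{H^{1/2}(\wh\Gamma_{k,\ell})}^2
\lesssim
\|A\|_{L^\infty(\wh\Gamma_{k,\ell})}^2\,|v|_{H^{1/2}(\wh\Gamma_{k,\ell})}^2
+\int_{\wh\Gamma_{k,\ell}}\!\int_{\wh\Gamma_{k,\ell}}\frac{\|A(\wh\xx)-A(\wh{\mathbf y})\|^2\,|v(\wh{\mathbf y})|^2}{\|\wh\xx-\wh{\mathbf y}\|_{\ell^2}^2}\,\mathrm d\wh\xx\,\mathrm d\wh{\mathbf y}.
\]

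For the first term I would use $\|A\|_{L^\infty(\wh\Gamma_{k,\ell})}=\|\nabla G_k\|_{L^\infty(\wh\Gamma_{k,\ell})}\lesssim H_k$ from~\eqref{eq:ass:nabla}. For the second term the key observation is that $A$ is Lipschitz along the edge with constant $\lesssim H_k$: since $\|\nabla^2 G_k\|_{L^\infty(\partial\wh\Omega)}\lesssim H_k$, the mean value theorem yields $\|A(\wh\xx)-A(\wh{\mathbf y})\|\lesssim H_k\,\|\wh\xx-\wh{\mathbf y}\|_{\ell^2}$. Substituting this cancels the singular denominator, so that, using $|\wh\Gamma_{k,\ell}|=1$, the double integral is bounded by $H_k^2\,\|v\|_{L^2(\wh\Gamma_{k,\ell})}^2$. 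This crucial step is what replaces an $L^\infty$-norm of $\nabla u$ (which the target estimate does not contain) by the much weaker $L^2$-norm. Combining, I obtain
\[
|\nabla\wh u|_{H^{1/2}(\wh\Gamma_{k,\ell})}^2
\lesssim
H_k^2\,|v|_{H^{1/2}(\wh\Gamma_{k,\ell})}^2+H_k^2\,\|v\|_{L^2(\wh\Gamma_{k,\ell})}^2 .
\]

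It remains to convert the two $v$-terms into norms of $\nabla u$ on $\Gamma_{k,\ell}$ via the edge change of variables $\wh s\mapsto G_k(\wh s)$. For the $L^2$ term I would either compute directly, or bootstrap from Lemma~\ref{lem:geoequiv1}, the relation $\|v\|_{L^2(\wh\Gamma_{k,\ell})}^2\eqsim H_k^{-1}\|\nabla u\|_{L^2(\Gamma_{k,\ell})}^2$, since the arc-length element scales like $H_k$; this contributes $H_k^2\cdot H_k^{-1}=H_k$ as required. For the seminorm I would use that the edge map is bi-Lipschitz with both constants $\eqsim H_k$ --- the upper one from $\|\nabla G_k\|_{L^\infty}\lesssim H_k$ and the lower one from $\|(\nabla G_k)^{-1}\|_{L^\infty}\lesssim H_k^{-1}$ --- so that $\|G_k(\wh s)-G_k(\wh t)\|_{\ell^2}\eqsim H_k\,|\wh s-\wh t|$; the two arc-length factors then cancel exactly the $H_k^2$ from the squared denominator, giving the scale invariance $|v|_{H^{1/2}(\wh\Gamma_{k,\ell})}^2\eqsim|\nabla u|_{H^{1/2}(\Gamma_{k,\ell})}^2$. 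Combining the displays yields $|\nabla\wh u|_{H^{1/2}}^2\lesssim H_k^2|\nabla u|_{H^{1/2}(\Gamma_{k,\ell})}^2+H_k\|\nabla u\|_{L^2(\Gamma_{k,\ell})}^2$, as claimed. The hard part will be this last seminorm change of variables: one must justify $\|G_k(\wh s)-G_k(\wh t)\|_{\ell^2}\eqsim H_k|\wh s-\wh t|$ uniformly, i.e. that the (possibly curved) physical edge is chord--arc with constants controlled only by the reference shape $\widetilde\Omega_k$, which is precisely where the inverse-Jacobian bound in~\eqref{eq:ass:nabla} enters.
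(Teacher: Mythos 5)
Your proposal is correct and follows essentially the same route as the paper's proof: the same Leibniz-type splitting of $\nabla G_k(\wh{\mathbf x})\nabla u(\mathbf x)-\nabla G_k(\wh{\mathbf y})\nabla u(\mathbf y)$, the same Lipschitz bound on $\nabla G_k$ via $\|\nabla^2 G_k\|_{L^\infty}\lesssim H_k$ to cancel the singular denominator, and the same bi-Lipschitz change of variables $\|G_k(\wh{\mathbf x})-G_k(\wh{\mathbf y})\|_{\ell^2}\eqsim H_k\|\wh{\mathbf x}-\wh{\mathbf y}\|_{\ell^2}$ from~\eqref{eq:ass:nabla}. The only cosmetic difference is that you transport the edge integrals to the physical edge at the very end, whereas the paper changes variables term by term; the chord--arc property you flag as the delicate point is asserted from~\eqref{eq:ass:nabla} in the paper as well.
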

\begin{proof}
	First, we observe
	$\nabla \wh u(\wh{\mathbf x}) = \nabla G(\wh{\mathbf x})\nabla u(\mathbf x)$; here and in what follows, we use the notation $\mathbf x = G_k(\wh{\mathbf x})$ and $\mathbf y = G_k(\wh{\mathbf y})$.
	Using the triangle inequality, we have
	\begin{align*}
			|\nabla \wh u|_{H^{1/2}(\wh \Gamma_{k,\ell})}^2
			 &=\int_{\wh \Gamma_{k,\ell}}\int_{\wh \Gamma_{k,\ell}}
			\frac{\|\nabla G_k(\wh{\mathbf x}) \nabla u(\mathbf x)-\nabla G_k(\wh{\mathbf y}) \nabla u(\mathbf y)\|_{\ell^2}^2}{\|\wh{\mathbf x}-\wh{\mathbf y}\|_{\ell^2}^2} \, \mathrm d\wh{\mathbf x} \, \mathrm d \wh{\mathbf y} \\
			&\le \int_{\wh \Gamma_{k,\ell}}\int_{\wh \Gamma_{k,\ell}}
			\frac{\|\nabla G_k(\wh{\mathbf x})\|_{\ell^2}\| \nabla u({\mathbf x})- \nabla u({\mathbf y})\|_{\ell^2}^2}{\|\wh{\mathbf x}-\wh{\mathbf y}\|_{\ell^2}^2} \, \mathrm d\wh{\mathbf x} \, \mathrm d \wh{\mathbf y} \\
			&\qquad + \int_{\wh \Gamma_{k,\ell}}\int_{\wh \Gamma_{k,\ell}}
			\frac{\|\nabla G_k(\wh{\mathbf x}) -\nabla G_k(\wh{\mathbf y})\|_{\ell^2}^2\| \nabla u({\mathbf y})\|_{\ell^2}^2}{\|\wh{\mathbf x}-\wh{\mathbf y}\|_{\ell^2}^2} \, \mathrm d\wh{\mathbf x} \, \mathrm d \wh{\mathbf y},
	\end{align*}
	where $\|\cdot\|_{\ell^2}$ applied to matrices is the spectral norm.
	Using the fundamental theorem of calculus, we have
	$\|\nabla G(\wh{\mathbf x}) - \nabla G(\wh{\mathbf y})\|_{\ell^2}
	\le  \|\nabla^2 G\|_{L^\infty(\wh \Omega)} \|\wh{\mathbf x}-\wh{\mathbf y}\|_{\ell^2}$.
	Assumption~\eqref{eq:ass:nabla} implies
	 $\|G(\wh{\mathbf x})-G(\wh{\mathbf y})\|_{\ell^2}
		 \eqsim H_k \|\wh{\mathbf x}-\wh{\mathbf y}\|_{\ell^2}$.
	Using this and the substitution rule, and again~\eqref{eq:ass:nabla}, we have further
	\begin{align*}
			|\nabla \wh u|_{H^{1/2}(\wh \Gamma_{k,\ell})}^2
			&\le \|\nabla G_k\|_{L^\infty(\wh\Omega)}^2 \int_{\wh \Gamma_{k,\ell}}\int_{\wh \Gamma_{k,\ell}}
			\frac{\|\nabla u({\mathbf x})- \nabla u({\mathbf y})\|_{\ell^2}^2}{\|\wh{\mathbf x}-\wh{\mathbf y}\|_{\ell^2}^2} \, \mathrm d\wh{\mathbf x} \, \mathrm d \wh{\mathbf y} \\
			&\qquad+\|\nabla^2 G_k\|_{L^\infty(\wh\Omega)}^2 \int_{\wh \Gamma_{k,\ell}}\int_{\wh \Gamma_{k,\ell}}
			\|\nabla u({\mathbf y})\|_{\ell^2}^2 \, \mathrm d\wh{\mathbf x} \, \mathrm d \wh{\mathbf y}
			\\
			&\eqsim \|\nabla G_k\|_{L^\infty(\wh\Omega)}^2  |\nabla u|_{H^{1/2}(\Gamma_{k,\ell})}^2
			+\|\nabla^2 G_k\|_{L^\infty(\wh\Omega)}^2 H_k^{-1}
			\|\nabla u\|_{L^2(\Gamma_{k,\ell})}^2 .
	\end{align*}
	Assumption~\eqref{eq:ass:nabla} gives then the desired result.
\end{proof}

Finally, we obtain the estimate for the norm of the extension operator.

\begin{theorem}\label{thrm:extension}
	Let $\Omega_k$ be one of the patches and $\Gamma_{k,\ell}$ be one of its sides.
	The statements
	\begin{equation}\nonumber
		\begin{aligned}
		&(E^{(k,\ell)} u)|_{\Gamma_{k,\ell}} = u|_{\Gamma_{k,\ell}},
		\quad
		\nabla (E^{(k,\ell)} u)|_{\Gamma_{k,\ell}} = \nabla u|_{\Gamma_{k,\ell}},\\
		&(E^{(k,\ell)} u)|_{\partial\Omega_k\setminus\Gamma_{k,\ell}} = 0,
		\quad
		\nabla (E^{(k,\ell)} u)|_{\partial\Omega_k\setminus\Gamma_{k,\ell}} = 0
		\end{aligned}
	\end{equation}
	and the estimate
	\begin{align*}
		|E^{(k,\ell)} u|_{H^2(\Omega_k)}^2
		\lesssim
		p^3
		|\nabla u|_{H^{1/2}(\Gamma_{k,\ell})}^2
		+
		 p^3 \left(1+\log p + \log \frac{H_k}{h_k}\right)  | \nabla u |_{L_0^{\infty}(\Gamma_{k,\ell})}^2
	\end{align*}
	hold for all $u\in V_\Delta^{(k)}$.
\end{theorem}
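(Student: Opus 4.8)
The plan is to reduce everything to the parameter-domain estimate of Theorem~\ref{thrm:ealpha bound} via the pull-back, and then translate the resulting bounds back to the physical patch with the norm-equivalence Lemmas~\ref{lem:geoequiv1} and~\ref{lem:geoequiv2}. Writing $\wh u := u \circ G_k$, the definition $E^{(k,\ell)} u = (\wh E \wh u)\circ G_k^{-1}$ means precisely that the pull-back of $E^{(k,\ell)} u$ is $\wh E \wh u$. Since $G_k$ maps $\wh\Gamma$ bijectively onto $\Gamma_{k,\ell}$ and $\partial\wh\Omega\setminus\wh\Gamma$ onto $\partial\Omega_k\setminus\Gamma_{k,\ell}$, the value identities $(\wh E\wh u)|_{\wh\Gamma}=\wh u|_{\wh\Gamma}$ and $(\wh E\wh u)|_{\partial\wh\Omega\setminus\wh\Gamma}=0$ from Theorem~\ref{thrm:ealpha bound} transfer directly. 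For the gradients I would use the chain rule $\nabla(\wh E\wh u)=(\nabla G_k)^\top\big((\nabla E^{(k,\ell)} u)\circ G_k\big)$ together with the same identity for $\wh u$: since $\nabla G_k$ is invertible by~\eqref{eq:ass:nabla}, equality of the parameter-domain gradients on $\wh\Gamma$ (and their vanishing on $\partial\wh\Omega\setminus\wh\Gamma$) forces the claimed $\nabla E^{(k,\ell)} u = \nabla u$ on $\Gamma_{k,\ell}$ and the vanishing on the rest of $\partial\Omega_k$.

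For the seminorm estimate I would chain the available results. Applying the $H^2$-part of Lemma~\ref{lem:geoequiv1} to $E^{(k,\ell)} u$, whose pull-back is $\wh E\wh u$, gives $|E^{(k,\ell)} u|_{H^2(\Omega_k)}^2 \lesssim H_k^{-2}\|\wh E\wh u\|_{H^2(\wh\Omega)}^2$. Theorem~\ref{thrm:ealpha bound} then bounds the right-hand side by $H_k^{-2}\big(p^3|\nabla\wh u|_{H^{1/2}(\wh\Gamma)}^2 + p^3 C_\Lambda\|\nabla\wh u\|_{L^\infty(\wh\Gamma)}^2\big)$ with $C_\Lambda := 1+\log p + \log\wh h^{-1}$. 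Next, Lemma~\ref{lem:geoequiv2} converts the $H^{1/2}$-seminorm into $H_k^2|\nabla u|_{H^{1/2}(\Gamma_{k,\ell})}^2 + H_k\|\nabla u\|_{L^2(\Gamma_{k,\ell})}^2$, while the $L^\infty$-equivalence in Lemma~\ref{lem:geoequiv1} turns $\|\nabla\wh u\|_{L^\infty(\wh\Gamma)}^2$ into $H_k^2\|\nabla u\|_{L^\infty(\Gamma_{k,\ell})}^2$. The factors $H_k^{-2}$ cancel, and since $\wh h = h_k/H_k$ one has $C_\Lambda = 1+\log p + \log(H_k/h_k)$, so at this stage
\[
|E^{(k,\ell)} u|_{H^2(\Omega_k)}^2 \lesssim p^3|\nabla u|_{H^{1/2}(\Gamma_{k,\ell})}^2 + p^3 H_k^{-1}\|\nabla u\|_{L^2(\Gamma_{k,\ell})}^2 + p^3 C_\Lambda\|\nabla u\|_{L^\infty(\Gamma_{k,\ell})}^2.
\]

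Two clean-up steps then finish the argument, and here lies the only genuine subtlety. The middle term is harmless: since $\Gamma_{k,\ell}$ has length $\eqsim H_k$, one has $H_k^{-1}\|\nabla u\|_{L^2(\Gamma_{k,\ell})}^2 \lesssim \|\nabla u\|_{L^\infty(\Gamma_{k,\ell})}^2$, so it is absorbed into the last term (recall $C_\Lambda\ge1$). The remaining task is to replace the full norm $\|\nabla u\|_{L^\infty(\Gamma_{k,\ell})}$ by the seminorm $|\nabla u|_{L_0^\infty(\Gamma_{k,\ell})}$, and the key observation is that membership $u\in V_\Delta^{(k)}$ makes this \emph{free}. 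Indeed, by definition of $\wh V_\Delta^{(k)}$ the gradient $\nabla\wh u$ vanishes at the four corners of $\wh\Omega$, hence $(\nabla u)\circ G_k = (\nabla G_k)^{-\top}\nabla\wh u$ vanishes at the endpoints of $\Gamma_{k,\ell}$. Whenever $\nabla u$ vanishes at some point of $\Gamma_{k,\ell}$, every constant $\mathbf c$ satisfies $|\mathbf c|\le\|\nabla u-\mathbf c\|_{L^\infty(\Gamma_{k,\ell})}$, whence $\|\nabla u\|_{L^\infty(\Gamma_{k,\ell})}\le\|\nabla u-\mathbf c\|_{L^\infty(\Gamma_{k,\ell})}+|\mathbf c|\le 2\|\nabla u-\mathbf c\|_{L^\infty(\Gamma_{k,\ell})}$; taking the infimum over $\mathbf c$ gives $\|\nabla u\|_{L^\infty(\Gamma_{k,\ell})}\le 2|\nabla u|_{L_0^\infty(\Gamma_{k,\ell})}$. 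Substituting this into the displayed bound yields exactly the claimed estimate. I expect this last reduction — recognizing that the corner conditions encoded in $V_\Delta^{(k)}$ are precisely what turns the $L^\infty$ norm into the $L_0^\infty$ seminorm — to be the one step easy to overlook; the rest is bookkeeping with the pull-back and the scaling lemmas.
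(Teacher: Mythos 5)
Your proposal is correct and follows essentially the same route as the paper: pull back to the parameter domain, apply Theorem~\ref{thrm:ealpha bound}, transfer the norms with Lemmas~\ref{lem:geoequiv1} and~\ref{lem:geoequiv2}, absorb the $H_k^{-1}\|\nabla u\|_{L^2}^2$ term via $\|\nabla u\|_{L^2(\Gamma_{k,\ell})}^2\lesssim H_k\|\nabla u\|_{L^\infty(\Gamma_{k,\ell})}^2$, and finally use the vanishing of $\nabla u$ at the endpoints of $\Gamma_{k,\ell}$ (inherited from the corner conditions in $V_\Delta^{(k)}$) to pass from $\|\nabla u\|_{L^\infty}$ to $2|\nabla u|_{L_0^\infty}$. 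The only difference is that you spell out the boundary-trace identities via the chain rule and the invertibility of $\nabla G_k$, which the paper leaves implicit.
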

\begin{proof}
	Let $C_\Lambda:=1+\log p + \log H_k/h_k = 1+\log p + \log 1/\wh h_k $.
	Let $u \in V^{(k)}_{\Delta}$ and let $\wh u = u\circ G_k$. Using Lemma~\ref{lem:geoequiv1},
	using Theorem~\ref{thrm:ealpha bound}
	and using Lemmas~\ref{lem:geoequiv1} and~\ref{lem:geoequiv2}
	and $\|w\|_{L^2(\Gamma_{k,\ell})} \lesssim H_k \|w\|_{L^\infty(\Gamma_{k,\ell})}^2$,
	we have
	\begin{align*}
		|E^{(k,\ell)} u|_{H^2(\Omega_k)}^2
			& \lesssim H_k^{-2}
				\|\wh E  \, \wh u \|_{H^2(\wh\Omega)}^2
			 \lesssim H_k^{-2} p^3
				( |\nabla \wh u |_{H^{1/2}(\wh \Gamma)}^2
				+ C_\Lambda \|\nabla \wh u\|_{L^\infty(\wh\Gamma)}^2 ) \\
			& \lesssim p^3
				( |\nabla  u|_{H^{1/2}( \Gamma_{k,\ell})}^2
				+ H_k^{-1} \|\nabla  u\|_{L^{2}( \Gamma_{k,\ell})}^2
				+C_\Lambda \|\nabla u\|_{L^\infty(\Gamma_{k,\ell})}^2 ) \\
			& \lesssim p^3
				( |\nabla  u|_{H^{1/2}( \Gamma_{k,\ell})}^2
				+C_\Lambda \|\nabla u\|_{L^\infty(\Gamma_{k,\ell})}^2 ) .
	\end{align*}
	Since  $w:=\nabla u$ vanishes at both ends of $\Gamma_{k,\ell}$,
	we have
	$
			\|w\|_{L^\infty(\Gamma_{k,\ell})}
			\le 2
			|w|_{L_0^\infty(\Gamma_{k,\ell})},
	$
	which finishes the proof.
\end{proof}

\section{The condition number estimate}
\label{sec:5}

In this section, we give a condition number estimate for the proposed IETI-DP solver.
Using the notation $S_{\Gamma\Gamma}:=\mbox{diag } (S_{\Gamma\Gamma}^{(1)}, \dots, S_{\Gamma\Gamma}^{(K)})$ and $\widetilde S:= \mbox{diag }(S_{\Gamma\Gamma}, S_{\Pi\Pi})$ as well as
$B_{\Gamma} := (B_{\Gamma}^{(1)}, \dots, B_{\Gamma}^{(k)})$
and
$\widetilde B:= (B_\Gamma, B_\Pi)$, we obtain immediately from~\eqref{eq:3:Fdef}
and~\eqref{eq:3:Mdef} that
\[
	F = \widetilde B \widetilde S^{-1} \widetilde B^\top
	\quad\mbox{and}\quad
	M = \tfrac14 B_\Gamma S_{\Gamma\Gamma} B_\Gamma^\top.
\]
We first observe as follows.
\begin{lemma}\label{lem:BBtop}
		The identity $B_\Gamma B_\Gamma^\top = 2 I$  holds. 
\end{lemma}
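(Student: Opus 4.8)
The plan is to read the claim directly off the combinatorial structure of $B_\Gamma$ established in Section~\ref{sec:3}. Recall that each row of $B_\Gamma$ encodes exactly one coupling constraint of the form~\eqref{eq:def:b} or~\eqref{eq:def:b2}. Both types involve precisely two interface degrees of freedom, one on each of two neighboring patches $\Omega_k$ and $\Omega_\ell$, with matrix entries $+1$ and $-1$ for~\eqref{eq:def:b} and $+1$ and $+1$ for~\eqref{eq:def:b2}. In either case every row of $B_\Gamma$ has exactly two nonzero entries, each equal to $\pm1$. Consequently the diagonal entry $(B_\Gamma B_\Gamma^\top)_{rr}$ equals the squared Euclidean norm of the $r$-th row, namely $1+1=2$.

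Next I would establish that every column of $B_\Gamma$ contains exactly one nonzero entry. A column corresponds to a single interface (``$\Gamma$'') degree of freedom on one fixed patch $\Omega_k$. By the transformed basis~\eqref{eq:psidef}--\eqref{eq:psi:bdy}, only the outermost layer perpendicular to a given edge is active there, so such a dof is associated with exactly one edge of $\Omega_k$, hence with exactly one neighboring patch $\Omega_\ell$, and (boundary dofs having already been eliminated in $V_k$) this edge is interior. Since the grids are fully matching~\eqref{eq:ass:knotvec matching}, the dof is paired with a unique partner on $\Omega_\ell$ and therefore appears in exactly one constraint row. Thus each column of $B_\Gamma$ has a single nonzero entry.

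With these two facts the off-diagonal entries are immediate: for $r\neq r'$ we have $(B_\Gamma B_\Gamma^\top)_{rr'}=\sum_c (B_\Gamma)_{rc}(B_\Gamma)_{r'c}$, and a nonzero summand would require a column $c$ with two nonzero entries, contradicting the single-nonzero-per-column property. Hence all off-diagonal entries vanish and $B_\Gamma B_\Gamma^\top = 2I$.

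The only genuinely delicate point is the second step: one must be certain that no interface degree of freedom is reused across several constraints. This is precisely where the choice of fat-vertex primal dofs is used — without removing the corner dofs, a dof sitting at a cross point would couple to several neighbors at once and its column would carry multiple nonzero entries, breaking the identity. I expect this observation, rather than any computation, to be the crux of the argument.
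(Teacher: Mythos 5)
Your proof is correct and follows essentially the same route as the paper: each row of $B_\Gamma$ has exactly two nonzero entries equal to $\pm 1$ (giving the diagonal value $2$), and each interface degree of freedom appears in at most one constraint (giving vanishing off-diagonal entries). Your added remark that the fat-vertex primal choice is what prevents a column from carrying several nonzero entries is a useful elaboration of a point the paper only asserts, but it does not change the argument.
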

\begin{proof}
	Each Lagrange multiplier realizes a constraint of the form~\eqref{eq:def:b}
	or~\eqref{eq:def:b2}. Thus, for each $\ell$, there are exactly two indices $i$ such that
	$[B_\Gamma]_{\ell,i} \ne 0$. Specifically, those non-zero coefficients are $1$ or $-1$.
	This immediately shows that $[B_\Gamma B_\Gamma^\top]_{\ell,\ell} = \sum_i [B_\Gamma]_{\ell,i}^2  = 2$.
	Moreover, for each degree of freedom, at most one Lagrange multiplier is active, i.e., for each $i$ there is at most one index $\ell$ such that $[B_\Gamma]_{\ell,i} \ne 0$. This immediately implies $[B_\Gamma B_\Gamma^\top]_{\ell,m}=0$ for $\ell\ne m$. This finishes the proof.
\end{proof}

Next, we discuss the relation between spline functions and associated coefficient vectors. Let
\[
	\widetilde V
	:=
	\left\{
		(v^{(1)},\dots,v^{(k)}) \in \prod_{k=1}^K V_k
		:
		C^{(k)} \ul v^{(k)} = R^{(k)} \ul w_\Pi \mbox{ for all $k$ and some } \ul w_\Pi
	\right\},
\]
where $\ul v^{(k)}$ is the coefficient vector associated to $v^{(k)}$. This definition of $\widetilde V$ represents the function space, where the values of the primal degrees of freedom agree between the patches; this definition coincides with the constraint from~\eqref{eq:3:contC}. Analogously, we define
\[
	\widetilde V_\Delta
	:=
	\left\{
		(v^{(1)},\dots,v^{(k)}) \in \widetilde V
		:
		C^{(k)} \ul v^{(k)} = 0 \mbox{ for all $k$ }
	\right\}.
\]
As introduced in Section~\ref{sec:3}, each coefficient vectors has the form $\ul v^{(k)} = (\ul v^{(k)\top}_I, \ul v^{(k)\top}_\Gamma, \ul v^{(k)\top}_\Pi)^\top \in \mathbb R^{N_k}$ with $N_k=N_{k,I}+N_{k,\Gamma}+N_{k,\Pi}$. We define the operator $Q_\Gamma: \widetilde V \to \mathbb R^{N_{1,\Gamma}+\cdots+N_{K,\Gamma}}$ via
\[
		Q_\Gamma v := ( \ul v^{(1)\top}_\Gamma, \dots,  \ul v^{(K)\top}_\Gamma)^\top.
\]
The operator $Q_\Pi : \widetilde V \to \mathbb R^{N_\Pi}$ assigns to each $v\in \widetilde V$ the vector $\ul w_\Pi$ such that $C^{(k)} \ul v^{(k)} = R^{(k)} \ul w_\Pi$ for all $k\in \{1,\dots,K\}$. Finally, let $\widetilde Q v:=((Q_\Gamma v)^\top, (Q_\Pi v)^\top)^\top$.

\begin{lemma}\label{lem:bbt}
	Let $u \in \widetilde V_\Delta$ and $w \in \widetilde V$ be such that
	$Q_\Gamma u
	 = B_\Gamma^\top \widetilde B
	\widetilde Q w$. Then
	\[
				\nabla u^{(k)} |_{\Gamma^{(k,\ell)}}
				=
				\nabla w^{(k)} |_{\Gamma^{(k,\ell)}}
				-
				\nabla w^{(k)} |_{\Gamma^{(k,\ell)}}
				\quad\mbox{and}\quad
				u^{(k)} |_{\Gamma^{(k,\ell)}}
				=
				w^{(k)} |_{\Gamma^{(k,\ell)}}
				-
				w^{(k)} |_{\Gamma^{(k,\ell)}}
	\]
	holds for all interfaces $\Gamma^{(k,\ell)}$.
\end{lemma}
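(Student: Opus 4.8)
The plan is to read $\widetilde B\widetilde Q w$ as the vector of $C^1$-continuity defects (jumps) of the function $w$ across the interfaces, and then to verify that $B_\Gamma^\top$ turns this defect vector into the interface coefficients of a function in $\widetilde V_\Delta$ whose traces and normal derivatives reproduce exactly those jumps. (I read the right-hand sides of the claim as the jumps $w^{(k)}|_{\Gamma^{(k,\ell)}}-w^{(\ell)}|_{\Gamma^{(k,\ell)}}$ and $\nabla w^{(k)}|_{\Gamma^{(k,\ell)}}-\nabla w^{(\ell)}|_{\Gamma^{(k,\ell)}}$, since the literal expressions $\nabla w^{(k)}-\nabla w^{(k)}$ vanish and carry no content.)

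First I would expand $\widetilde B\widetilde Q w = B_\Gamma Q_\Gamma w + B_\Pi Q_\Pi w$ and argue that, entry by entry, it coincides with the continuity defect of $w$. Each Lagrange multiplier $c$ is attached to a matching pair of interface degrees of freedom $(k,i_c),(\ell,j_c)$ on a single interface $\Gamma^{(k,\ell)}$ with $k<\ell$, whose constraint row has either the value form \eqref{eq:def:b} or the normal-derivative form \eqref{eq:def:b2}. Using the basis transformation that decouples the primal and dual blocks (so that $Q_\Gamma w$ and $Q_\Pi w$ are the coefficients in that transformed basis), the combination $B_\Gamma Q_\Gamma w + B_\Pi Q_\Pi w$ equals the value, respectively normal-derivative, defect of $w$; here the $B_\Pi Q_\Pi w$ term is precisely what reinstates the corner contributions that a naive $B_\Gamma Q_\Gamma w$ would miss. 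Writing $\phi_c$ for the common trace of the matching pair $c$ and invoking the linear independence of the value traces from \eqref{eq:psi:bdy}, the value-jump $w^{(k)}|_{\Gamma^{(k,\ell)}}-w^{(\ell)}|_{\Gamma^{(k,\ell)}}$ and the normal-derivative-jump then admit unique expansions $\sum_c \mu_c\phi_c$, and entry $c$ of $\widetilde B\widetilde Q w$ equals the coefficient $\mu_c$.

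Next I would apply $B_\Gamma^\top$. Since each interface degree of freedom is touched by at most one constraint and the coefficient of a constraint on its $k<\ell$ patch is $+1$ (cf.\ \eqref{eq:def:b} and \eqref{eq:def:b2}), the sparsity pattern underlying Lemma~\ref{lem:BBtop} gives $[B_\Gamma^\top\widetilde B\widetilde Q w]_{(k,i_c)} = \mu_c$. Imposing $Q_\Gamma u = B_\Gamma^\top\widetilde B\widetilde Q w$ thus fixes the interface coefficients of $u$ to be exactly the jump coefficients $\mu_c$; because $u\in\widetilde V_\Delta$ its primal corner coefficients vanish, so the trace of $u^{(k)}$ on $\Gamma^{(k,\ell)}$ is governed solely by these interface coefficients. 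Reconstructing via \eqref{eq:psi:bdy}, the value degrees of freedom give $u^{(k)}|_{\Gamma^{(k,\ell)}}=\sum_c \mu_c\phi_c = w^{(k)}|_{\Gamma^{(k,\ell)}}-w^{(\ell)}|_{\Gamma^{(k,\ell)}}$, and the normal-derivative degrees of freedom give $\partial_n u^{(k)}|_{\Gamma^{(k,\ell)}}$ equal to the normal-derivative jump; differentiating the trace identity tangentially supplies the remaining gradient component, so that $\nabla u^{(k)}|_{\Gamma^{(k,\ell)}}=\nabla w^{(k)}|_{\Gamma^{(k,\ell)}}-\nabla w^{(\ell)}|_{\Gamma^{(k,\ell)}}$.

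The hard part will be the first step, namely pinning down that $\widetilde B\widetilde Q w$ is genuinely the continuity-defect vector with entries equal to the jump expansion coefficients; this rests on the basis-transformation interpretation of $Q_\Gamma,Q_\Pi$ together with the linear independence of the interface traces from \eqref{eq:psi:bdy}. The accompanying nuisance is the sign and orientation bookkeeping: the two patches carry opposite normals $\wh n_k=-\wh n_\ell$, and a value constraint enters with coefficient $-1$ on the second patch while a normal-derivative constraint enters with $+1$. One must check that these signs are exactly what is needed so that, once the opposite-normal convention is folded in, the expression $w^{(k)}\mp w^{(\ell)}$ becomes the true value-jump and normal-derivative-jump rather than a spurious sum, which is what closes the argument.
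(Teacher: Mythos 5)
Your overall strategy is the same as the paper's: interpret $B_\Gamma^\top\widetilde B$ as the jump operator acting separately on the value and the normal-derivative families of interface degrees of freedom, read off the two trace identities from \eqref{eq:psi:bdy}, \eqref{eq:def:b} and \eqref{eq:def:b2}, and then assemble the gradient identity. The paper compresses the first part into a citation of Ref.~18 (where the $C^0$ case is worked out) plus the observation that the two constraint families touch disjoint sets of degrees of freedom; you reconstruct that bookkeeping explicitly, which is fine, and you correctly diagnose the typo in the statement (the right-hand sides are meant to be the jumps $w^{(k)}-w^{(\ell)}$). You also correctly identify the sign issue coming from $\wh n_k=-\wh n_\ell$ as the point that makes \eqref{eq:def:b2} a sum rather than a difference.

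The one step you gloss over is exactly the step the paper makes explicit at the end of its proof: passing from the parameter-domain identities to the physical gradient identity. Your claim that ``the normal-derivative degrees of freedom give $\partial_n u^{(k)}|_{\Gamma^{(k,\ell)}}$ equal to the normal-derivative jump'' is only literally true for the parameter-domain derivative $\partial_{\wh x}\wh u^{(k)}|_{\wh\Gamma}$; on the physical interface the normal derivative is $n_k\cdot(\nabla G_k)^{-\top}\wh\nabla\wh u^{(k)}$ and in general mixes $\partial_{\wh x}$ (controlled by the derivative dofs) with $\partial_{\wh y}$ (controlled tangentially by the value dofs), since the pre-image of $n_k$ need not be the parameter normal. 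What closes the argument is that, by \eqref{eq:ass:mathching}, $\nabla G_k^{(k,\ell)}=\nabla G_\ell^{(k,\ell)}$ on $\wh\Gamma$, i.e., $G^{(k,\ell)}$ from \eqref{eq:gkl} is $C^1$: the chain-rule factor $(\nabla G)^{-\top}$ is then the \emph{same} for both patches on the interface, so the parameter-domain identity $\wh\nabla\wh u^{(k)}=\wh\nabla\wh w^{(k)}-\wh\nabla\wh w^{(\ell)}$ transforms term by term into the physical one. You should state this explicitly; ``differentiating the trace identity tangentially'' only supplies the tangential component and does not by itself justify the normal component. With that addition your argument is complete and matches the paper's.
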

\begin{proof}
	The observation that $B_\Gamma^\top \widetilde B$ models the jump is standard, see also Ref.~\refcite{SchneckenleitnerTakacs:2020}, where the details have been worked out for $C^0$ smoothness condition was considered. Since the conditions~\eqref{eq:3:smoothness1} and the conditions~\eqref{eq:3:smoothness2} refer to different sets of degrees of freedom, the results from Ref.~\refcite{SchneckenleitnerTakacs:2020} directly carry over to the setup of this problem and one obtains
	\begin{equation}\nonumber
			\wh u^{(k)}|_{\wh \Gamma} = \wh w^{(k)}|_{\wh\Gamma} - \wh w^{(\ell)}|_{\wh\Gamma},
			\qquad
			\partial_{\wh x} \wh u^{(k)}|_{\wh \Gamma} = \partial_{\wh x}  \wh w^{(k)}|_{\wh \Gamma} - \partial_{\wh x}  \wh w^{(\ell)}|_{\wh \Gamma},
	\end{equation}
	where both $\wh u^{(k)}$ and $\wh u^{(\ell)}$ are the pre-images of $u^{(k)}$ and $w^{(k)}$, respectively, under $G^{(k,\ell)}$ as defined in~\eqref{eq:gkl}. We immediately obtain
	\begin{equation}\nonumber
			\nabla \wh u^{(k)}|_{\wh \Gamma} = \nabla  \wh w^{(k)}|_{\wh \Gamma} - \nabla  \wh w^{(\ell)}|_{\wh \Gamma}
	\end{equation}
	and, since $G^{(k,\ell)}$ is $C^1$-smooth, the desired result.
\end{proof}

\begin{lemma}\label{lem:my trace}
	For each patch $\Omega_k$ and each of its sides $\Gamma_{k,\ell}$, the estimate
	\[
		|\nabla u|_{H^{1/2}(\Gamma_{k,\ell})}^2
		+
		|\nabla u|_{L^\infty_0(\Gamma_{k,\ell})}^2
		\lesssim
		(1+\log p+ \log \tfrac{H_k}{h_k})
		|u|_{H^2(\Omega_k)}^2
	\]
	holds for all $u\in H^2(\Omega_k)$.
\end{lemma}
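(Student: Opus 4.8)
The plan is to estimate the two seminorms on the left-hand side separately; only the $L^\infty_0$-term forces the logarithmic factor. For the $H^{1/2}$-part I would use that each component of $\nabla u$ lies in $H^1(\Omega_k)$ with $|\nabla u|_{H^1(\Omega_k)}^2=|u|_{H^2(\Omega_k)}^2$, so that applying the trace estimate~\eqref{eq:ass:trace} componentwise and using $\Gamma_{k,\ell}\subset\partial\Omega_k$ gives $|\nabla u|_{H^{1/2}(\Gamma_{k,\ell})}^2\le|\nabla u|_{H^{1/2}(\partial\Omega_k)}^2\lesssim|\nabla u|_{H^1(\Omega_k)}^2=|u|_{H^2(\Omega_k)}^2$, with no logarithm entering.

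For the $L^\infty_0$-part I would first exploit translation invariance. Writing $\mathbf c:=|\Omega_k|^{-1}\int_{\Omega_k}\nabla u\,\mathrm d\xx$ and $v:=\nabla u-\mathbf c$, one has $|\nabla u|_{L^\infty_0(\Gamma_{k,\ell})}\le\|v\|_{L^\infty(\Gamma_{k,\ell})}$, while $|v|_{H^1(\Omega_k)}=|u|_{H^2(\Omega_k)}$ and, since $v$ has zero mean on $\Omega_k$, a Poincaré inequality yields $\|v\|_{H^1(\Omega_k)}^2\lesssim|u|_{H^2(\Omega_k)}^2$. Thus the whole task reduces to a logarithmic (discrete) Sobolev trace inequality $\|v\|_{L^\infty(\Gamma_{k,\ell})}^2\lesssim(1+\log p+\log\tfrac{H_k}{h_k})\,\|v\|_{H^1(\Omega_k)}^2$, applied to $v=\nabla u-\mathbf c$, that is, to a shift of the gradient of a spline.

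To prove this inequality I would scale the patch to $\widetilde\Omega_k=H_k^{-1}\Omega_k$, of unit size and shape controlled by $C_G$ (cf. the remark following~\eqref{eq:ass:trace}), so that the logarithm, being a ratio of mesh sizes, is scale free. On the scaled patch I would combine the critical Sobolev embedding $\|v\|_{L^\infty}\lesssim\epsilon^{-1/2}\|v\|_{H^{1+\epsilon}}$, valid for $\epsilon\in(0,1]$ with the constant degenerating like $\epsilon^{-1/2}$ as $\epsilon\to0$, with the inverse inequality $|u|_{H^{2+\epsilon}(\Omega_k)}\lesssim(p^2\wh h_k^{-1})^{\epsilon}\,|u|_{H^{2}(\Omega_k)}$ for the underlying tensor-product spline, obtained by transferring Lemma~\ref{lem:schwabinverse} to two dimensions through Lemma~\ref{lem:geoequiv1}. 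Since $|v|_{H^{1+\epsilon}}=|\nabla u|_{H^{1+\epsilon}}\eqsim|u|_{H^{2+\epsilon}}$ and the lower-order part of $\|v\|_{H^{1+\epsilon}}$ is already bounded by $|u|_{H^2(\Omega_k)}$ by the previous paragraph, these combine to $\|v\|_{L^\infty(\Gamma_{k,\ell})}^2\lesssim\epsilon^{-1}(p^2\wh h_k^{-1})^{2\epsilon}|u|_{H^2(\Omega_k)}^2$; choosing $\epsilon=\min\{1,\,1/\log(p^2\wh h_k^{-1})\}$ turns the power $(p^2\wh h_k^{-1})^{2\epsilon}$ into a harmless constant and leaves $\epsilon^{-1}\eqsim1+\log p+\log\wh h_k^{-1}=1+\log p+\log\tfrac{H_k}{h_k}$, as desired.

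I expect the main obstacle to be exactly this logarithmic inequality, for two reasons. First, $H^1(\Omega_k)$ in two dimensions and $H^{1/2}(\Gamma_{k,\ell})$ in one dimension are the critical spaces for the $L^\infty$-embedding, so the estimate is genuinely false without discreteness and the logarithm is unavoidable; the entire argument rests on the spline inverse inequality, so the statement is really one about $u$ in the discrete space $V_k$, not about an arbitrary $H^2$-function (for which $\nabla u$ need not even be bounded on $\Gamma_{k,\ell}$). Second, the optimization over $\epsilon$ is delicate, because the constant relating the intrinsic Sobolev--Slobodeckij norm to the interpolation norm may blow up as $\epsilon\to0$; to keep it under control I would carry out the fractional-smoothness step with the K-functional norm and the monotonicity of interpolation, exactly as in the proof of Lemma~\ref{lem:interpolinverse}. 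A comparatively minor but still necessary point is that the translation invariance of $|\,\cdot\,|_{L^\infty_0}$ must be reconciled with the non-affine geometry: a constant physical gradient pulls back to a non-constant field on $\wh\Omega$, which is precisely why I subtract the patch mean $\mathbf c$ on the physical side and transfer norms only through the shape-uniform equivalences of Lemmas~\ref{lem:geoequiv1} and~\ref{lem:geoequiv2} together with the $C^1$-bounds~\eqref{eq:ass:nabla} on $G_k$.
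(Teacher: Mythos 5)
Your handling of the $H^{1/2}$-term is exactly the paper's: apply \eqref{eq:ass:trace} componentwise to $v:=\nabla u$ and restrict to $\Gamma_{k,\ell}$; no logarithm appears. For the $L^\infty_0$-term the paper proves nothing itself --- it simply invokes Lemma~4.14 of Ref.~\refcite{SchneckenleitnerTakacs:2020}, a discrete Sobolev inequality of the form $|v|_{L_0^\infty(\Gamma_{k,\ell})}^2\lesssim(1+\log p+\log\tfrac{H_k}{h_k})\,|v|_{H^1(\Omega_k)}^2$ for discrete $v$, together with Lemma~\ref{lem:geoequiv1}, and applies it to $v=\nabla u$. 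What you propose is a self-contained reconstruction of that cited inequality by the standard mechanism (critical embedding with $\epsilon^{-1/2}$-degeneration, a fractional spline inverse estimate, optimization of $\epsilon\eqsim 1/\log(p^2\wh h_k^{-1})$), so the route is the same in substance, but your write-up carries the burden the paper outsources. Your observation that the estimate cannot hold for arbitrary $u\in H^2(\Omega_k)$ is correct and worth stating: the trace of an $H^1$-gradient need not be bounded, so the left-hand side can be infinite while the right-hand side is finite; the lemma is meaningful, and is only ever used, for $u$ in the discrete space (in Lemma~\ref{lem:omega} it is applied to $w^{(k)}\in V_k$), exactly as in the cited discrete lemma.

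One concrete soft spot in your reconstruction: you want to ``transfer Lemma~\ref{lem:schwabinverse} to two dimensions through Lemma~\ref{lem:geoequiv1}'' so as to obtain $|u|_{H^{2+\epsilon}(\Omega_k)}\lesssim(p^2\wh h_k^{-1})^{\epsilon}|u|_{H^{2}(\Omega_k)}$ on the \emph{physical} patch. Lemma~\ref{lem:geoequiv1} only relates norms up to order two, and assumption \eqref{eq:ass:nabla} bounds only $\nabla G_k$ and $\nabla^2 G_k$, so norms of order $2+\epsilon$ (or $3$) cannot be pushed through the non-affine map $G_k$ under the stated hypotheses. The repair is to run the embedding-plus-inverse-estimate step entirely on the parameter domain, where the components of $\nabla\wh u$ are genuine tensor-product splines (of degrees $(p-1,p)$ and $(p,p-1)$) to which Lemma~\ref{lem:schwabinverse} applies directly, and to return to $\Omega_k$ only through the $L^\infty$- and $H^1$-equivalences of Lemma~\ref{lem:geoequiv1}; this also forces you to revisit the constant you subtract, since a constant shift of $\nabla u$ does not pull back to a constant shift of $\nabla\wh u$, so the field whose fractional norm you estimate must remain a spline up to something you control separately. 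These repairs are routine, but as written the physical-domain fractional inverse estimate is not available.
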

\begin{proof}
	\eqref{eq:ass:trace} yields
	$
	|v|_{H^{1/2}(\Gamma_{k,\ell})}^2
	\lesssim
	|v|_{H^1(\Omega_k)}^2
	$
	for all $v\in H^1(\Omega_k)$.
	Using Lemma~4.14 in Ref.~\refcite{SchneckenleitnerTakacs:2020}
	and Lemma~\ref{lem:geoequiv1}, we also have
	$
	|v|_{L_0^{\infty}(\Gamma_{k,\ell})}^2
	\lesssim
		(1+\log p+ \log \tfrac{H_k}{h_k})
	|v|_{H^1(\Omega_k)}^2.
	$
	The desired estimate is obtained by applying these estimates to $v:=\nabla u$.
\end{proof}

\begin{lemma}\label{lem:omega}
	For each $w\in \widetilde V$, we have
	\[
		\|B_{\Gamma}^\top \widetilde B \widetilde Q w\|_{S_{\Gamma\Gamma}}^2
		\lesssim p^3\left(1+\log p + \max_{k=1,\dots,K}  \log \frac{H_k}{h_k}\right)^2
		\| \widetilde Q w \|_{\widetilde S}^2.
	\]
\end{lemma}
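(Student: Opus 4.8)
The plan is to follow the standard FETI-DP template for bounding the jump operator, exploiting the explicit extension operator $E^{(k,\ell)}$ from Theorem~\ref{thrm:extension}. Write $\ul z := B_\Gamma^\top \widetilde B \widetilde Q w$ and split the norm patchwise, $\|\ul z\|_{S_{\Gamma\Gamma}}^2 = \sum_{k=1}^K (\ul z^{(k)})^\top S_{\Gamma\Gamma}^{(k)} \ul z^{(k)}$. By Lemma~\ref{lem:bbt} there is a $u \in \widetilde V_\Delta$ with $Q_\Gamma u = \ul z$ whose interface value and gradient on each $\Gamma_{k,\ell}$ equal the jumps $w^{(k)}-w^{(\ell)}$ and $\nabla w^{(k)}-\nabla w^{(\ell)}$. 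Since $S_{\Gamma\Gamma}^{(k)}$ is the Schur complement obtained by eliminating the interior degrees of freedom, the energy-minimizing property gives $(\ul z^{(k)})^\top S_{\Gamma\Gamma}^{(k)} \ul z^{(k)} = |\mathcal{H}_k u^{(k)}|_{H^2(\Omega_k)}^2$, where $\mathcal{H}_k$ denotes the discrete biharmonic extension (minimal $H^2$-seminorm over the interior with the interface data of $\ul z^{(k)}$ fixed and zero primal data). In particular this quantity is bounded above by the energy of any competitor in $V_\Delta^{(k)}$ carrying the same interface data.

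As competitor I take $v^{(k)} := \sum_{\ell \in \mathcal{N}(k)} E^{(k,\ell)} u^{(k)}$, summing over the set $\mathcal{N}(k)$ of patches sharing an edge with $\Omega_k$ (at most four, by geometric conformity). By the boundary identities in Theorem~\ref{thrm:extension}, $E^{(k,\ell)} u^{(k)}$ reproduces the value and gradient of $u^{(k)}$ on $\Gamma_{k,\ell}$ and vanishes together with its gradient on the remaining edges; hence $v^{(k)} \in V_\Delta^{(k)}$ matches $u^{(k)}$ on the whole interface. Minimality of the Schur complement together with the triangle inequality then yields $(\ul z^{(k)})^\top S_{\Gamma\Gamma}^{(k)} \ul z^{(k)} \le |v^{(k)}|_{H^2(\Omega_k)}^2 \lesssim \sum_{\ell \in \mathcal{N}(k)} |E^{(k,\ell)} u^{(k)}|_{H^2(\Omega_k)}^2$.

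Next I apply Theorem~\ref{thrm:extension} to each summand, bounding it by $p^3 |\nabla u^{(k)}|_{H^{1/2}(\Gamma_{k,\ell})}^2 + p^3 C_\Lambda |\nabla u^{(k)}|_{L_0^\infty(\Gamma_{k,\ell})}^2$ with $C_\Lambda := 1+\log p + \max_{k}\log (H_k/h_k)$. Using Lemma~\ref{lem:bbt} to replace $\nabla u^{(k)}|_{\Gamma_{k,\ell}}$ by the jump $\nabla w^{(k)}-\nabla w^{(\ell)}$ and the triangle inequality for the $H^{1/2}$- and $L_0^\infty$-seminorms splits the bound into a patch-$k$ and a patch-$\ell$ contribution. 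Lemma~\ref{lem:my trace}, applied on $\Omega_k$ and on $\Omega_\ell$ respectively, then converts each $|\nabla w^{(k)}|_{H^{1/2}(\Gamma_{k,\ell})}^2$ and $|\nabla w^{(k)}|_{L_0^\infty(\Gamma_{k,\ell})}^2$ into $\lesssim C_\Lambda |w^{(k)}|_{H^2(\Omega_k)}^2$, so that the product of the two logarithmic factors produces the overall weight $p^3 C_\Lambda^2$.

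The step requiring the most care is the final identification of the right-hand side with $\|\widetilde Q w\|_{\widetilde S}^2$. The traces $w^{(k)}|_{\Gamma_{k,\ell}}$ and $\nabla w^{(k)}|_{\Gamma_{k,\ell}}$ depend only on the $\Gamma$- and $\Pi$-degrees of freedom of $w$, by the outermost-layer property~\eqref{eq:psi:bdy}; hence in the trace estimates I may replace $w^{(k)}$ by its discrete biharmonic extension $\mathcal{H}_k w^{(k)}$, leaving the left-hand sides unchanged while replacing $|w^{(k)}|_{H^2(\Omega_k)}^2$ by the smaller minimal energy $|\mathcal{H}_k w^{(k)}|_{H^2(\Omega_k)}^2$. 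This replacement is indispensable, since for a general $w\in\widetilde V$ the actual energy $|w^{(k)}|_{H^2(\Omega_k)}^2$ need not be controlled by its $\widetilde S$-contribution at all. Summing over $k$ and over the boundedly many neighbours $\ell\in\mathcal{N}(k)$, and using that the orthogonalization of primal and dual basis functions in~\eqref{eq:3:kkt2} gives $\sum_{k=1}^K |\mathcal{H}_k w^{(k)}|_{H^2(\Omega_k)}^2 = \|\widetilde Q w\|_{\widetilde S}^2$, delivers the claimed estimate.
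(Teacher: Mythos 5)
Your proposal is correct and follows essentially the same route as the paper's proof: identify the $S_{\Gamma\Gamma}^{(k)}$-norm with the minimal $H^2$-energy, use $\sum_{\ell\in\mathcal N(k)}E^{(k,\ell)}u^{(k)}$ as competitor, bound it via Theorem~\ref{thrm:extension}, pass to the jumps of $w$ with Lemma~\ref{lem:bbt}, and apply the trace estimate of Lemma~\ref{lem:my trace} to the patchwise energy minimizers so that the right-hand side becomes $\|\widetilde Q w\|_{\widetilde S}^2$. Your explicit remark that the trace estimate must be applied to the discrete biharmonic extension of $w^{(k)}$ rather than to $w^{(k)}$ itself is exactly the (implicit) step the paper takes when it inserts the infimum in its final display.
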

\begin{proof}
	Define $C_\Lambda:=1+\log p + \max_{k=1,\dots,K}  \log \frac{H_k}{h_k}$.
	Let $w \in \widetilde V$ be arbitrary but fixed and
	$\ul w := \widetilde Q w$.
	Moreover, let $u\in \widetilde V_\Delta$ be arbitrary but fixed
	such that $\ul u := Q_\Delta u = B_{\Gamma}^\top \widetilde B \ul w$.
	The Schur complement $S_{\Gamma\Gamma}^{(k)}$ models the biharmonic extension, i.e., we have
	$\|\ul u \|_{S_{\Gamma\Gamma}}^2
	 = \sum_{k=1}^K\|\ul u_\Gamma^{(k)} \|_{S_{\Gamma\Gamma}^{(k)}}^2$ with
	\begin{align*}\nonumber
		\|\ul u_\Gamma^{(k)} \|_{S_{\Gamma\Gamma}^{(k)}}^2
			& =
				\inf_{v \in V^{(k)}_\Delta : v|_{\partial\Omega_k} = u^{(k)}|_{\partial\Omega_k},\partial_n v|_{\partial\Omega_k} = \partial_n u^{(k)}|_{\partial\Omega_k}}
				| v |_{H^{2}(\Omega_k)}^2 \\
			& =
				\inf_{v \in V^{(k)}_\Delta : \nabla v|_{\partial\Omega_k} = \nabla u^{(k)}|_{\partial\Omega_k}}
				| v |_{H^{2}(\Omega_k)}^2
	\end{align*}
	and consequently
	\begin{equation}\label{eq:lm52:schur1v}
			\|\ul u \|_{S_{\Gamma\Gamma}}^2
			=
			\sum_{k=1}^K\;
				\inf_{v \in V^{(k)}_\Delta : \nabla v|_{\partial\Omega_k} = \nabla u^{(k)}|_{\partial\Omega_k}}
				| v |_{H^{2}(\Omega_k)}^2.
	\end{equation}
	Analogously, we have
	\begin{equation}\label{eq:lm52:schur1w}
			\|\ul w\|_{\widetilde S}^2
			=
			\sum_{k=1}^K\;
				\inf_{v \in V^{(k)} : \nabla v|_{\partial\Omega_k} = \nabla w^{(k)}|_{\partial\Omega_k}}
				| v |_{H^{2}(\Omega_k)}^2.
	\end{equation}
	Using~\eqref{eq:lm52:schur1v} and Theorem~\ref{thrm:extension}, we obtain
	\begin{equation}\nonumber
	\begin{aligned}
		\|\ul u \|_{S_{\Gamma\Gamma}}^2
		& \le \sum_{k=1}^K \big|
					 \sum_{\ell \in \mathcal N(k)} E^{(k,\ell)} u^{(k)} \big|_{H^2(\Omega_k)}^2
			\lesssim \sum_{k=1}^K \sum_{\ell \in \mathcal N(k)}
					|E^{(k,\ell)} u^{(k)} |_{H^2(\Omega_k)}^2
		\\
		& \lesssim p^3 C_\Lambda \sum_{k=1}^K \sum_{\ell\in\mathcal N(k)} \big(
				| \nabla u^{(k)} |_{H^{1/2}(\Gamma_{k,\ell})}^2
				+
				 | \nabla u^{(k)} |_{L_0^\infty(\Gamma_{k,\ell})}^2
			\big),
	\end{aligned}
	\end{equation}
	where $\mathcal N(k):=\{\ell : \Omega_k \mbox{ and }\Omega_\ell \mbox{ share edge}\}$.
	Using Lemma~\ref{lem:bbt} and the triangle inequality and $\ell \in \mathcal N(k) \Leftrightarrow k \in \mathcal N (\ell)$, we further have
	\begin{equation}\nonumber
	\begin{aligned}
		\|\ul u \|_{S_{\Gamma\Gamma}}^2
		& \lesssim p^3 C_\Lambda \sum_{k=1}^K \sum_{\ell\in\mathcal N(k)} \big(
				| \nabla w^{(k)} |_{H^{1/2}(\Gamma_{k,\ell})}^2
				+
				 | \nabla w^{(k)} |_{L_0^\infty(\Gamma_{k,\ell})}^2
			\big).
	\end{aligned}
	\end{equation}
	Using Lemma~\ref{lem:my trace}, we further have
	\begin{equation}\label{eq:lm52:schur2}
	\begin{aligned}
		\|\ul u \|_{S_{\Gamma\Gamma}}^2
		\lesssim
		p^3C_\Lambda^2
		\sum_{k=1}^K
			\inf_{v \in V^{(k)} : \nabla v|_{\partial\Omega_k} = \nabla w^{(k)}|_{\partial\Omega_k}}
				| v |_{H^{2}(\Omega_k)}^2
		=p^3  C_\Lambda^2 \|\ul w \|_{\widetilde S}^2.
	\end{aligned}
	\end{equation}
	This finishes the proof.
\end{proof}

Now, we can prove the condition number estimate.
\begin{theorem}\label{thrm:main}
		There is a constant $C$ that only depends on
		$C_G$ from~\eqref{eq:ass:nabla}, $C_T$ from~\eqref{eq:ass:trace}, $C_Q$ from~\eqref{eq:ass:quasi uniform}, and the diameter of $\Omega$
		such that the estimate
		\[
				\kappa(M F)\le C p^3 \left(1+\log p + \max_{k=1,\dots,K}  \log \frac{H_k}{h_k}\right)^2
		 \]
		holds.
\end{theorem}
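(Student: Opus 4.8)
The plan is to bound the extreme eigenvalues of $MF$ separately, exploiting that both $F=\widetilde B\widetilde S^{-1}\widetilde B^\top$ and $M=\tfrac14 B_\Gamma S_{\Gamma\Gamma}B_\Gamma^\top$ are symmetric. First I would record that $M$ is in fact symmetric positive definite on the whole multiplier space: by Lemma~\ref{lem:BBtop} the matrix $B_\Gamma$ has full row rank, so $B_\Gamma^\top$ is injective and $\langle M\lambda,\lambda\rangle=\tfrac14\|B_\Gamma^\top\lambda\|_{S_{\Gamma\Gamma}}^2>0$ for $\lambda\neq0$. Consequently $MF$ is self-adjoint with respect to the inner product $\langle\cdot,\cdot\rangle_{M^{-1}}$, its eigenvalues are the Rayleigh quotients $\langle F\lambda,\lambda\rangle/\langle M^{-1}\lambda,\lambda\rangle$, and $\kappa(MF)=\lambda_{\max}(MF)/\lambda_{\min}(MF)$ (restricted, as usual, to the range of $\widetilde B$ on which $F$ is nonsingular). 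It therefore suffices to show $\lambda_{\min}(MF)\ge1$ and $\lambda_{\max}(MF)\lesssim p^3C_\Lambda^2$ with $C_\Lambda:=1+\log p+\max_k\log(H_k/h_k)$.

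For the lower bound I would follow the standard FETI-DP argument. Introduce the multiplicity-scaled jump $B_{D,\Gamma}:=\tfrac12 B_\Gamma$; Lemma~\ref{lem:BBtop} then gives the crucial identity $B_\Gamma B_{D,\Gamma}^\top=I$. Writing $\langle M^{-1}\lambda,\lambda\rangle=\sup_{\mu\neq0}\langle\lambda,\mu\rangle^2/\langle M\mu,\mu\rangle$ and using this identity, one has $\langle\lambda,\mu\rangle=\langle B_\Gamma^\top\lambda,B_{D,\Gamma}^\top\mu\rangle$, so Cauchy--Schwarz in the $S_{\Gamma\Gamma}$-inner product yields $\langle\lambda,\mu\rangle\le\|B_\Gamma^\top\lambda\|_{S_{\Gamma\Gamma}^{-1}}\,\langle M\mu,\mu\rangle^{1/2}$. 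Since the primal block contributes nonnegatively, $\|B_\Gamma^\top\lambda\|_{S_{\Gamma\Gamma}^{-1}}^2\le\|B_\Gamma^\top\lambda\|_{S_{\Gamma\Gamma}^{-1}}^2+\|B_\Pi^\top\lambda\|_{S_{\Pi\Pi}^{-1}}^2=\langle F\lambda,\lambda\rangle$, and taking the supremum over $\mu$ gives $\langle M^{-1}\lambda,\lambda\rangle\le\langle F\lambda,\lambda\rangle$, i.e. $\lambda_{\min}(MF)\ge1$. The point is that the additional term $B_\Pi S_{\Pi\Pi}^{-1}B_\Pi^\top$ coming from the elimination of the primal variables only helps this bound.

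For the upper bound I would switch to the (semi-)inner product $\langle\cdot,\cdot\rangle_F$, with respect to which $MF$ is also self-adjoint, so that $\lambda_{\max}(MF)=\max_\lambda\|F\lambda\|_M^2/\|\lambda\|_F^2$. Setting $\sigma:=\widetilde S^{-1}\widetilde B^\top\lambda$, a direct computation gives $\|\lambda\|_F^2=\langle F\lambda,\lambda\rangle=\|\sigma\|_{\widetilde S}^2$ and, since $F\lambda=\widetilde B\sigma$, also $\|F\lambda\|_M^2=\tfrac14\|B_\Gamma^\top\widetilde B\sigma\|_{S_{\Gamma\Gamma}}^2$. The vector $\sigma$ lies in the partially assembled degree-of-freedom space, hence $\sigma=\widetilde Q w$ for some $w\in\widetilde V$, and Lemma~\ref{lem:omega} applies verbatim to bound $\|B_\Gamma^\top\widetilde B\widetilde Qw\|_{S_{\Gamma\Gamma}}^2\lesssim p^3C_\Lambda^2\|\widetilde Qw\|_{\widetilde S}^2=p^3C_\Lambda^2\|\sigma\|_{\widetilde S}^2$. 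Combining, $\lambda_{\max}(MF)\lesssim p^3C_\Lambda^2$, and together with the lower bound this yields the claimed estimate for $\kappa(MF)$.

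The genuine difficulty has already been absorbed into Lemma~\ref{lem:omega} and the extension-operator analysis of Section~\ref{sec:4}; the remaining assembly is essentially the abstract FETI-DP machinery. The one point that requires care here is the interplay with the primal block: one must verify that the primal term in $F$ is harmless for the lower bound (which it is, being positive semidefinite) and that the preconditioner, built only from $S_{\Gamma\Gamma}$, still controls $\lambda_{\max}$ through the partially assembled vector $\sigma=\widetilde Qw$, for which Lemma~\ref{lem:omega} was precisely designed. I expect the identification $\sigma=\widetilde Qw$ and the self-adjointness bookkeeping in the two different inner products to be the only steps where an error could slip in.
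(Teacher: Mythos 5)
Your proposal is correct and follows essentially the same route as the paper: both reduce the claim to the two one-sided bounds $\ul\lambda^\top M^{-1}\ul\lambda\le\ul\lambda^\top F\ul\lambda$ (using Lemma~\ref{lem:BBtop} and the positive semidefiniteness of the primal term $B_\Pi S_{\Pi\Pi}^{-1}B_\Pi^\top$) and $\ul\lambda^\top F\ul\lambda\lesssim p^3C_\Lambda^2\,\ul\lambda^\top M^{-1}\ul\lambda$ (via Lemma~\ref{lem:omega} and the surjectivity of $\widetilde Q$). Your phrasing of the lower bound through the scaled jump $B_{D,\Gamma}=\tfrac12 B_\Gamma$ with Cauchy--Schwarz, and of the upper bound through the $F$-Rayleigh quotient with $\sigma=\widetilde S^{-1}\widetilde B^\top\ul\lambda$, is an equivalent reformulation of the paper's supremum-representation computations.
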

\begin{proof}
	Let $C_\Lambda :=1+\log p + \max_{k=1,\dots,K}  \log \frac{H_k}{h_k}$.
	We have using Lemma~\ref{lem:omega} that
	\begin{align*}
			\ul \lambda^\top F \ul \lambda
			& = \sup_{\ul {\widetilde w}} \frac{ |\ul {\widetilde w}^\top \widetilde B^\top \ul \lambda|^2 }{ \|\ul {\widetilde w} \|_{\widetilde S}^2}
			 \lesssim p^3 C_\Lambda^2 \sup_{\ul {\widetilde w}} \frac{ |\ul {\widetilde w}^\top \widetilde B^\top \ul \lambda|^2 }{ \|B_\Gamma ^\top \widetilde B \ul {\widetilde w} \|_{S_{\Gamma\Gamma}}^2}
			\\
			& \le p^3 C_\Lambda^2 \sup_{\ul \mu} \frac{ |\ul \mu^\top \ul \lambda|^2 }{ \|B_\Gamma ^\top \ul \mu \|_{S_{\Gamma\Gamma}}^2}
			= p^3 C_\Lambda^2\; \ul \lambda^\top (B_\Gamma  S_{\Gamma\Gamma} B_\Gamma ^\top)^{-1} \ul \lambda
			 = \frac{p^3 C_\Lambda^2}4 \ul \lambda^\top M^{-1} \ul \lambda
	\end{align*}
	for all vectors $\ul \lambda$.
	Next, we observe that Lemma~\ref{lem:BBtop} implies that $B_\Gamma B_\Gamma ^\top$ has full rank and that the rank must be equal to the dimensionality of the identity matrix, i.e., the matrix $B_\Gamma $ has as least as many columns as rows, i.e., that -- interpreted as a linear operator -- $B_\Gamma $ is surjective. Using this and Lemma~\ref{lem:BBtop} and $B_\Gamma  S_{\Gamma\Gamma}^{-1} B_\Gamma^\top  \le
		B_\Gamma  S_{\Gamma\Gamma}^{-1} B_\Gamma^\top
		+B_\Pi  S_{\Pi\Pi}^{-1} B_\Pi^\top
		= \widetilde B  \widetilde S^{-1} \widetilde B^\top =F$, we see
	\begin{align*}
			\ul\lambda^\top M^{-1} \ul \lambda
			& =4 \sup_{\ul w_\Gamma} \frac{ |\ul w_\Gamma^\top B_\Gamma ^\top \ul \lambda|^2 }{ \|B_\Gamma ^\top B_\Gamma  \ul w_\Gamma \|_{S_{\Gamma\Gamma}}^2}
			 \le \sup_{\ul w_\Gamma} \frac{ |\ul w_\Gamma^\top B_\Gamma^\top \ul \lambda|^2 }{ \|\ul w_\Gamma \|_{S_{\Gamma\Gamma}}^2}
			= \ul\lambda^\top B_\Gamma  S_{\Gamma\Gamma}^{-1} B_\Gamma^\top \ul \lambda
			\le \ul\lambda^\top F \ul \lambda
	\end{align*}
	for all vectors $\ul \lambda$.
	Since we have estimated the relative eigenvalues of $F$ and $M^{-1}$, we immediately obtain the desired result.
\end{proof}

This result shows that the condition number of the preconditioned system behaves --
analogous to the results obtained in Ref.~\refcite{SchneckenleitnerTakacs:2020} for the Poisson problem -- that the condition number bound follows a polylogarithmic growth with respect to the grid size. Concerning the growth with respect to the spline degree, in  Ref.~\refcite{SchneckenleitnerTakacs:2020}, the authors were able to show that the growth is like $p (\log p)^2$ for the Poisson problem, while we obtain  $p^3 (\log p)^2$ for the biharmonic problem. In both cases, the numerical experiments show that the growth in $p$ is actually not as severe as indicated by the convergence theory.
The main contribution of this paper however is the construction of a IETI-DP solver that does not require deluxe scaling, but uses a standard scaled Dirichlet preconditioner instead.

\section{Numerical results}
\label{sec:6}

In this section, we illustrate our theoretical findings with the results from numerical experiments. All numerical experiments are realized with the C++ library G+Smo, see Ref.~\refcite{gismoweb}.

\subsection{B-spline quarter annulus}

For the first experiment, we choose a B-spline approximation of a quarter annulus. The geometry is parameterized with B-splines of degree $2$ and $\Xi_1=\Xi_2=(0,0,0,1/2,1,1,1)$. The geometry and the control mesh are shown in Figure~\ref{fig:annulussolution}. This single-patch geometry is subdivided into $4\times 4$ or $8 \times 8$ patches by uniformly splitting the domain. Since the overall geometry is based on a single-patch geometry, this construction guarantees a matching $C^1$ conforming discretization of the multi-patch geometry.

\begin{figure}[h]
\begin{center}
\includegraphics[height=2.8cm]{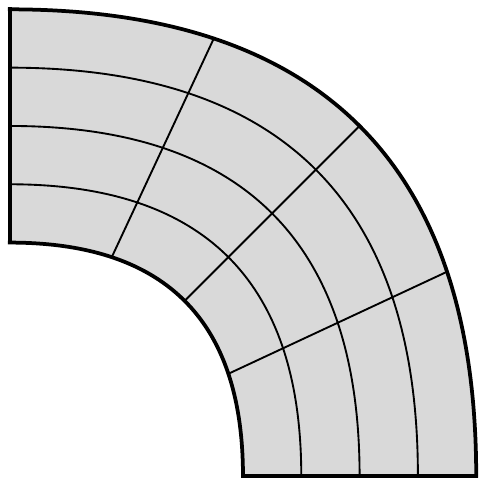}\quad
\includegraphics[height=2.8cm]{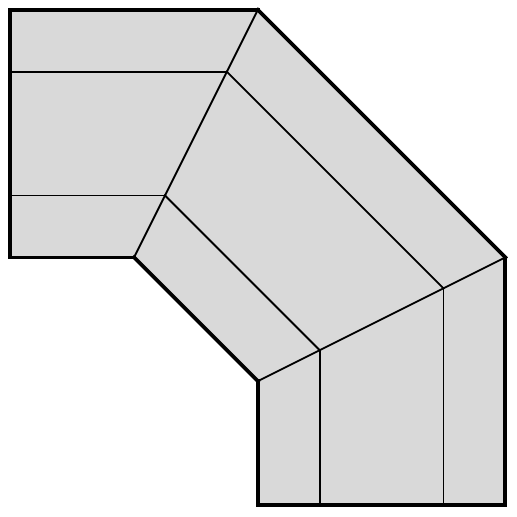}\quad
\includegraphics[height=2.8cm]{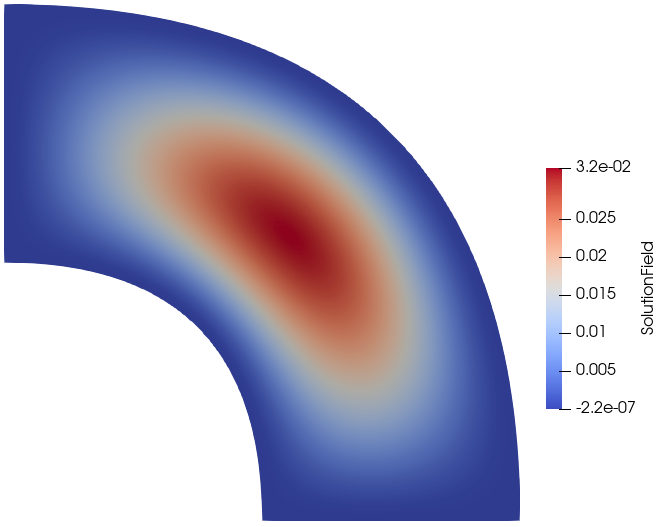}
\caption{Annulus domain, corresponding control mesh, and solution field as obtained with $16$ patches, $r=5$ and $p=3$}
\label{fig:annulussolution}
\end{center}
\end{figure}

\noindent
On this domain, we solve the first biharmonic problem
\[
	\Delta^2 u(x,y) = \tfrac 18 \pi^4 \sin(\pi x/2) \sin(\pi y/2)
	\;\;\mbox{for}\;\; (x,y) \in \Omega,
	\quad
	u=\partial_n u=0
	\;\;\mbox{on}\;\;\partial\Omega.
\]
We discretize the problem using grids that are (on the parameter domain) uniform. The refinement level $r=0$ corresponds to a grid without inner knots, all further grids are obtained by uniform dyadic refinement, so it has $2^r-1$ inner knots per direction (and $\wh h=2^{-r}$). For the discretization, we use splines of degree $p \in \{2,3,4,5,6\}$ and with  maximum smoothness. We set up the IETI-DP system as outlined in this paper. The resulting preconditioned system $F \ul \lambda = \ul b$ is solved with a conjugate gradient (cg) solver, preconditioned with the proposed preconditioner $M$. We stop the iteration if the residual was reduced by a factor of $10^{-6}$, compared to the initial residual. We give the numbers of iterations required to obtain the stopping criterion (iteration count) and the estimates for the condition number of $MF$ as obtained by the cg solver in Tables~\ref{tab:Annulus2std} and \ref{tab:Annulus3std}.
The solution for $p=3$ and $r=5$ over $16$ patches is depicted in Figure~\ref{fig:annulussolution}.

\begin{table}[t]
\scriptsize
	\newcolumntype{L}[1]{>{\raggedleft\arraybackslash\hspace{-1em}}m{#1}}
	\centering
	\renewcommand{\arraystretch}{1.25}
\begin{tabular}{l|L{2em}L{1em}|L{2em}L{1em}|L{2em}L{1em}|L{2em}L{1em}|L{2em}L{1em}}
		\toprule
		& \multicolumn{2}{c|}{$p=2$}
		& \multicolumn{2}{c|}{$p=3$}
		& \multicolumn{2}{c|}{$p=4$}
		& \multicolumn{2}{c|}{$p=5$}
		& \multicolumn{2}{c}{$p=6$}
\\
		$r$
		& $\kappa$ & it
		& $\kappa$ & it
		& $\kappa$ & it
		& $\kappa$ & it
		& $\kappa$ & it \\
		\midrule
3 & 16.61 & 33 & 16.47 & 34 & 19.44 & 36 & 24.86 & 40 &  27.71 & 45\\
4 & 27.50 & 40 & 23.58 & 39 & 30.19 & 42 & 36.64 & 46 &  42.22 & 50\\
5 & 37.55 & 47 & 35.82 & 46 & 43.16 & 50 & 51.90 & 53 &  60.31 & 59\\
6 & 50.77 & 53 & 51.39 & 54 & 62.91 & 57 & 70.73 & 61 &  79.59 & 66\\
		\bottomrule
	\end{tabular}
	\captionof{table}{Iteration counts (it) and condition $\kappa$; Annulus composed of $16$ patches
		\label{tab:Annulus2std}}
\end{table}

\begin{table}[t]
\scriptsize
	\newcolumntype{L}[1]{>{\raggedleft\arraybackslash\hspace{-1em}}m{#1}}
	\centering
	\renewcommand{\arraystretch}{1.25}
\begin{tabular}{l|L{2em}L{1em}|L{2em}L{1em}|L{2em}L{1em}|L{2em}L{1em}|L{2em}L{1em}}
		\toprule
		& \multicolumn{2}{c|}{$p=2$}
		& \multicolumn{2}{c|}{$p=3$}
		& \multicolumn{2}{c|}{$p=4$}
		& \multicolumn{2}{c|}{$p=5$}
		& \multicolumn{2}{c}{$p=6$}
\\
		$r$
		& $\kappa$ & it
		& $\kappa$ & it
		& $\kappa$ & it
		& $\kappa$ & it
		& $\kappa$ & it \\
		\midrule
3 & 22.73 & 40 & 26.58 & 44 & 27.98 & 46 & 33.92 & 52 & 38.61 & 57\\
4 & 33.02 & 49 & 33.20 & 51 & 39.08 & 55 & 47.63 & 62 & 54.66 & 66\\
5 & 42.48 & 56 & 46.52 & 60 & 58.65 & 67 & 66.78 & 72 & 73.51 & 76\\
6 & 56.44 & 66 & 65.99 & 72 & 80.88 & 79 & 94.07 & 84 & 94.19 & 87\\
		\bottomrule
	\end{tabular}
	\captionof{table}{Iteration counts (it) and condition $\kappa$; Annulus composed of $64$ patches
		\label{tab:Annulus3std}}
\end{table}

We observe that the condition number grows approximately linear with the refinement level $r$ (and thus with $\max_k \log H_k/h_h$). Concerning the dependence on the spline degree, it seems that the growth is smaller than quadratic in $p$. Both observations are consistent with the theory.
For the finer grid levels, it seems that the condition numbers for $p=2$ are larger than those for $p=3$.

\subsection{D-shaped lamella with hole}\label{sec:6:2}

\begin{figure}[b]
\begin{center}
\includegraphics[height=3cm]{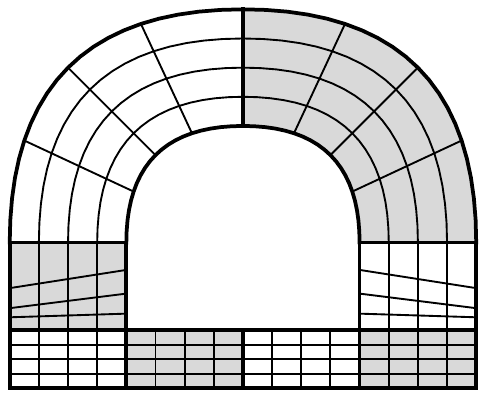}\quad
\includegraphics[height=3cm]{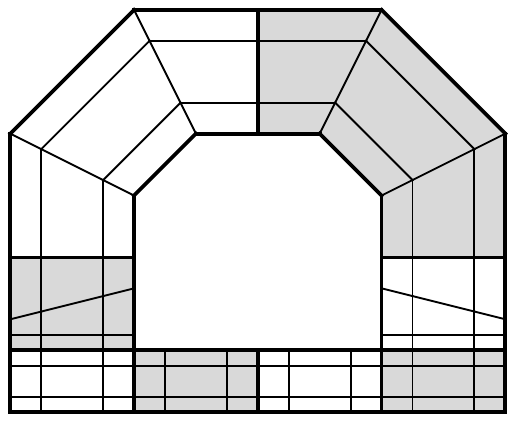}\quad
\includegraphics[height=3cm]{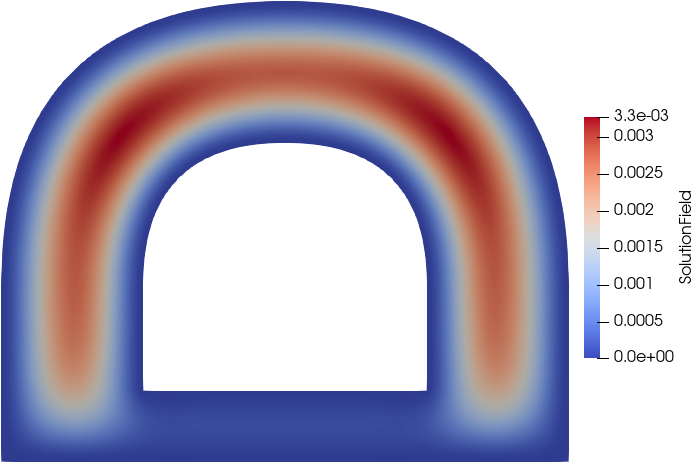}
\caption{Lamella domain, corresponding control mesh, and solution field as obtained with $32$ patches, $r=5$ and $p=3$}
\label{fig:lamella:1}
\end{center}
\end{figure}

\begin{table}[t]
\scriptsize
	\newcolumntype{L}[1]{>{\raggedleft\arraybackslash\hspace{-1em}}m{#1}}
	\centering
	\renewcommand{\arraystretch}{1.25}
\begin{tabular}{l|L{2em}L{1em}|L{2em}L{1em}|L{2em}L{1em}|L{2em}L{1em}|L{2em}L{1em}}
		\toprule
		& \multicolumn{2}{c|}{$p=2$}
		& \multicolumn{2}{c|}{$p=3$}
		& \multicolumn{2}{c|}{$p=4$}
		& \multicolumn{2}{c|}{$p=5$}
		& \multicolumn{2}{c}{$p=6$}
\\
		$r$
		& $\kappa$ & it
		& $\kappa$ & it
		& $\kappa$ & it
		& $\kappa$ & it
		& $\kappa$ & it \\
		\midrule
3 & 20.73 & 38 & 14.24 & 33 & 15.87 & 36 & 18.04 & 40 & 23.72 & 46 \\
4 & 38.15 & 47 & 27.43 & 41 & 24.25 & 43 & 26.33 & 46 & 28.52 & 52 \\
5 & 66.13 & 56 & 44.16 & 49 & 36.44 & 51 & 38.35 & 54 & 42.93 & 58 \\
6 & 99.57 & 65 & 63.91 & 58 & 49.75 & 59 & 52.50 & 62 & 56.53 & 66 \\
		\bottomrule
	\end{tabular}
	\captionof{table}{Iteration counts (it) and condition $\kappa$; Lamella domain composed of $32$ patches
		\label{tab:Lamella1std}}
\end{table}

In this second experiment, we demonstrate that the proposed approach does not only work on geometries that are obtained by splitting single-patch geometries. As initial geometry, we consider a D-shaped lamella with a hole. The initial geometry is represented by $8$ patches. Each patch is parameterized with B-splines of degree $2$ and with knot vectors $(0,0,0,1/2,1,1,1)$ as above. The geometry and control mesh are depicted in Figure~\ref{fig:lamella:1}.
The rectangular patches adjacent to the quarter annulus domains are parameterized in a non-standard way in order to obtain a $C^1$-fully matching discretization.
Then, each patch is uniformly split into 4 patches, so the geometry is represented using $32$ patches.
On this domain, we solve the model problem
\[
	\Delta^2 u = 1
	\;\;\mbox{on}\;\;  \Omega,
	\quad
	u=\partial_n u=0
	\;\;\mbox{on}\;\;\partial\Omega.
\]
Again, we have discretized the problem with splines of some degree $p$, maximum smoothness over a uniform grid with $\wh h=2^{-r}$ as obtained by $r$ dyadic refinement steps. The setup of the cg solver is as in the last subsection. The iteration counts and the condition number estimates for $MF$ as obtained by the cg solver are given in Table~\ref{tab:Lamella1std}.
We observe that the growth of the condition number in the refinement level $r$ (and thus in $\max_k \log H_k/h_h$) is approximately linear. Concerning the dependence on the spline degree, it seems that the growth is smaller than quadratic in $p$. This is again consistent with the theory.

\subsection{A modified preconditioner}

For comparization, we consider a modified preconditioner that we motivate for a simple two-patch domain only; assuming that $\Omega_1 := (-1,0)\times(0,1)$ and $\Omega_2:=(0,1)\times (0,1)$ are discretized with the same function, we have
\[
		F = B_\Gamma^{(1)} S_{\Gamma\Gamma}^{-1} B_\Gamma^{(1)\top}
				+ B_\Gamma^{(2)} S_{\Gamma\Gamma}^{-1} B_\Gamma^{(2)\top},
\]
where $S_{\Gamma\Gamma}=S_{\Gamma\Gamma}^{(1)}=S_{\Gamma\Gamma}^{(2)}$ is the same matrix due to symmetry. (There is no term $B_\Pi S_{\Pi\Pi}^{-1}  B_\Pi^\top$ since there are no primal degrees of freedom.) Next, we split the degrees of freedom of group ``$\Gamma$'' further into degrees of freedom with non-vanishing function value (group ``$V$'', depicted with blue square dots in Figure~\ref{fig:schema}) and degrees of freedom with non-vanishing normal derivative (group ``$D$'', depicted with red triangular dots). Based on this splitting and assuming a corresponding order of the degrees of freedom, have
\[
		S_{\Gamma\Gamma} = \begin{pmatrix} S_{VV} & S_{VD} \\ S_{DV} & S_{DD} \end{pmatrix},
		\quad
		B_{\Gamma}^{(1)} = \begin{pmatrix} I & 0 \\ 0 & I \end{pmatrix},
		\quad
		B_{\Gamma}^{(2)} = \begin{pmatrix} -I & 0 \\ 0 & I \end{pmatrix}.
\]
The matrices $B_{\Gamma}^{(1)}$ and $B_{\Gamma}^{(2)}$ have different signs in their first block since the constraint~\eqref{eq:def:b} for the function values guarantees equality. In the second block, the sign is always positive since the normal derivatives should have different signs, which is guaranteed by constraint~\eqref{eq:def:b2}.
Following this, we obtain
\begin{align*}
		F &= 2 \begin{pmatrix}
			S_{VV}-S_{VD}S_{DD}^{-1}S_{DV} \\ &S_{DD}-S_{DV}S_{VV}^{-1}S_{VD}
		\end{pmatrix}^{-1}
		\\&= \sum_k
		B_{\Gamma}^{(k)}
		\begin{pmatrix}
			S_{VV}^{(k)}-S_{VD}^{(k)}S_{DD}^{(k)-1}S_{DV}^{(k)} \\ &S_{DD}^{(k)}-S_{DV}^{(k)}S_{VV}^{(k)-1}S_{VD}^{(k)}
		\end{pmatrix}^{-1}
		B_{\Gamma}^{(k)\top}.
\end{align*}
This motivates the choice
\[
		M_{mod}
		:=
		\sum_k
		B_{\Gamma}^{(k)}
		\begin{pmatrix}
			S_{VV}^{(k)}-S_{VD}^{(k)}S_{DD}^{(k)-1}S_{DV}^{(k)} \\ &S_{DD}^{(k)}-S_{DV}^{(k)}S_{VV}^{(k)-1}S_{VD}^{(k)}
		\end{pmatrix}
		B_{\Gamma}^{(k)\top}.
\]
This modified preconditioner is not only applicable for the two-patch case, but also for the multi-patch case. Again, the Schur complements can be represented using the matrix $A^{(k)}$. In order to realize the overall preconditioner, two patch-local problems have to be solved, one for the group ``$V$'' and one for the group ``$D$''. This makes the preconditioner around twice as expensive as the proposed preconditioner. The results from the numerical experiments as presented in Tables~\ref{tab:Annulus2mod}, \ref{tab:Annulus3mod} and~\ref{tab:Lamella1mod} show that using this preconditioner leads to the same qualitative behavior as using the proposed preconditioner, generally with reduced iteration counts. Although the iteration counts are smaller, this does not compensate for the higher costs per iteration.

\begin{table}[h]
\scriptsize
	\newcolumntype{L}[1]{>{\raggedleft\arraybackslash\hspace{-1em}}m{#1}}
	\centering
	\renewcommand{\arraystretch}{1.25}
\begin{tabular}{l|L{2em}L{1em}|L{2em}L{1em}|L{2em}L{1em}|L{2em}L{1em}|L{2em}L{1em}}
		\toprule
		& \multicolumn{2}{c|}{$p=2$}
		& \multicolumn{2}{c|}{$p=3$}
		& \multicolumn{2}{c|}{$p=4$}
		& \multicolumn{2}{c|}{$p=5$}
		& \multicolumn{2}{c}{$p=6$}
\\
		$r$
		& $\kappa$ & it
		& $\kappa$ & it
		& $\kappa$ & it
		& $\kappa$ & it
		& $\kappa$ & it \\
		\midrule
3  &  6.04  &  18  &  6.47  &  19  &  7.50  &  20  &  9.22  &  23  &  17.83  &  27\\
4  &  13.72  &  25  &  11.14  &  24  &  11.65  &  24  &  13.83  &  26  &  20.42  &  31\\
5  &  21.13  &  29  &  15.95  &  28  &  17.03  &  28  &  19.74  &  30  &  25.10  &  35\\
6  &  29.27  &  34  &  21.26  &  32  &  23.65  &  31  &  26.90  &  33  &  30.79  &  38\\
		\bottomrule
	\end{tabular}
	\captionof{table}{Iteration counts (it) and condition $\kappa$; Annulus composed of $16$ patches using modified Dirichlet preconditioner
		\label{tab:Annulus2mod}}
\end{table}

\begin{table}[h]
\scriptsize
	\newcolumntype{L}[1]{>{\raggedleft\arraybackslash\hspace{-1em}}m{#1}}
	\centering
	\renewcommand{\arraystretch}{1.25}
\begin{tabular}{l|L{2em}L{1em}|L{2em}L{1em}|L{2em}L{1em}|L{2em}L{1em}|L{2em}L{1em}}
		\toprule
		& \multicolumn{2}{c|}{$p=2$}
		& \multicolumn{2}{c|}{$p=3$}
		& \multicolumn{2}{c|}{$p=4$}
		& \multicolumn{2}{c|}{$p=5$}
		& \multicolumn{2}{c}{$p=6$}
\\
		$r$
		& $\kappa$ & it
		& $\kappa$ & it
		& $\kappa$ & it
		& $\kappa$ & it
		& $\kappa$ & it \\
		\midrule
3 &  8.26 & 22 &  7.97 & 24 & 10.37 & 27 & 12.85 & 31 & 21.98 & 39\\
4 & 17.21 & 32 & 14.15 & 32 & 16.73 & 34 & 20.18 & 36 & 25.74 & 43\\
5 & 26.79 & 40 & 20.04 & 39 & 25.80 & 41 & 30.06 & 44 & 33.51 & 49\\
6 & 38.63 & 49 & 29.54 & 47 & 37.00 & 47 & 43.82 & 51 & 48.35 & 56\\
		\bottomrule
	\end{tabular}
	\captionof{table}{Iteration counts (it) and condition $\kappa$; Annulus composed of $64$ patches using modified Dirichlet preconditioner
		\label{tab:Annulus3mod}}
\end{table}

\begin{table}[h]
\scriptsize
	\newcolumntype{L}[1]{>{\raggedleft\arraybackslash\hspace{-1em}}m{#1}}
	\centering
	\renewcommand{\arraystretch}{1.25}
\begin{tabular}{l|L{2em}L{1em}|L{2em}L{1em}|L{2em}L{1em}|L{2em}L{1em}|L{2em}L{1em}}
		\toprule
		& \multicolumn{2}{c|}{$p=2$}
		& \multicolumn{2}{c|}{$p=3$}
		& \multicolumn{2}{c|}{$p=4$}
		& \multicolumn{2}{c|}{$p=5$}
		& \multicolumn{2}{c}{$p=6$}
\\
		$r$
		& $\kappa$ & it
		& $\kappa$ & it
		& $\kappa$ & it
		& $\kappa$ & it
		& $\kappa$ & it \\
		\midrule
3  &    6.86  &  19  &   6.64  &  19  &   6.24  &  19  &   8.02  &  24  &  16.67  &  31\\
4  &   21.81  &  27  &  17.61  &  25  &  14.01  &  24  &  11.88  &  28  &  19.62  &  34\\
5  &   42.42  &  34  &  32.21  &  30  &  24.10  &  28  &  19.05  &  31  &  24.01  &  38\\
6  &   68.66  &  41  &  49.60  &  36  &  36.66  &  33  &  28.05  &  35  &  29.83  &  42\\
		\bottomrule
	\end{tabular}
	\captionof{table}{Iteration counts (it) and condition $\kappa$; Lamella domain composed of $32$ patches using modified Dirichlet preconditioner
		\label{tab:Lamella1mod}}
\end{table}

\section{Conclusions}
\label{sec:7}

We have presented a IETI-DP solver for the first biharmonic problem.
The proposed discretization uses a $C^1$ smooth spline discretization across the patches, so it is $H^2$-conforming. For the IETI-DP solver, we have proposed a patch wise preconditioner that is a variant of the scaled Dirichlet preconditioner, which is the standard for second order problems. The key idea that allowed the construction and the analysis of this approach was the construction of an appropriate basis transformation. For the proposed approach, we have shown (Theorem~\ref{thrm:main}) that the condition number of the preconditioned system follows the standard behavior in the grid size, i.e., like $(1+\max_k \log H_k/h_k)^2$. We have also given an explicit estimate with respect to the spline degree, which does not seem to be sharp. The numerical experiments illustrate our theoretical findings.

\section*{Acknowledgments}
The author wants to thank Rainer Schneckenleitner with whom the author discussed the setup of IETI-DP solvers for biharmonic problems a few years ago.
This research was funded in whole or in part by the Austrian Science Fund (FWF): 10.55776/P33956.

\bibliography{literature.bib}

\end{document}